\documentclass{amsart}

\bibliographystyle{plain}
\setcounter{tocdepth}{2}

\usepackage{amsmath, amsthm, amssymb, amsfonts,enumerate}
\usepackage[foot]{amsaddr}
\usepackage[normalem]{ulem}
\usepackage[pdfusetitle, colorlinks=true]{hyperref}
\usepackage{color}
\usepackage{comment}
\textwidth15cm \topmargin0cm \textheight22cm

\theoremstyle{plain}

\newtheorem{theorem}{Theorem}[section]

\newtheorem{lemma}[theorem]{Lemma}
\newtheorem{lem}[theorem]{Lemma}

\newtheorem{corollary}[theorem]{Corollary}

\newtheorem{conjecture}[theorem]{Conjecture}

\newtheorem{proposition}[theorem]{Proposition}

\theoremstyle{definition}

\newtheorem{definition}[theorem]{Definition}

\theoremstyle{remark}
\newtheorem{remark}[theorem]{Remark}

\DeclareSymbolFont{AMSb}{U}{msb}{m}{n}
\DeclareMathSymbol{\N}{\mathalpha}{AMSb}{"4E}
\DeclareMathSymbol{\R}{\mathalpha}{AMSb}{"52}
\DeclareMathSymbol{\Z}{\mathalpha}{AMSb}{"5A}
\DeclareMathSymbol{\D}{\mathalpha}{AMSb}{"44}
\DeclareMathSymbol{\s}{\mathalpha}{AMSb}{"53}

%

\newcommand{\CD}{\mathrm{CD}}
\newcommand{\RCD}{\mathrm{RCD}}
\newcommand{\CAT}{\mathrm{CAT}}
\newcommand{\CBA}{\mathrm{CBA}}

\newcommand{\eps}{\varepsilon}
\renewcommand{\phi}{\varphi}

\newcommand{\e}{e}


\DeclareMathOperator{\argmin}{argmin}

\DeclareMathOperator{\Ric}{Ric}

\DeclareMathOperator{\Hess}{Hess}

\DeclareMathOperator{\vol}{vol}



\renewcommand{\tilde}{\widetilde}

\newcommand{\cvertex}{\mathbf{0}}

\DeclareMathOperator{\diam}{diam}

\newcommand{\lk}{\kappa}
\newcommand{\uk}{K}

\definecolor{grey}{rgb}{0.800781, 0.800781, 0.800781}

%
\def\S2k{\mathbb{S}^2_\kappa}

\def\co{\colon\thinspace}
\def\SS{\mathbb S}
\def\const{\mathrm{const}}
\DeclareMathOperator{\curv}{curv}
\def\D{\mathbf D}

\def\vk{\theta}
\def\Vk{\Theta}

\DeclareMathOperator{\injrad}{Injrad}
\DeclareMathOperator{\Isom}{Isom}

\DeclareMathOperator{\Aut}{Aut}

\DeclareMathOperator{\Scal}{Scal}
\newcommand{\bCAT}{\boldsymbol\partial}

\DeclareMathOperator{\Rm}{Rm}

\def\Mid{M_{i,\delta} }
\newcommand\Rk[1]{\mathcal R_{#1}}

\def\onto{\twoheadrightarrow}

\author{Vitali Kapovitch}\thanks{University of Toronto, vtk@math.utoronto.ca}

\title{Mixed curvature almost flat manifolds}\thanks{\textit{2010 Mathematics Subject classification}. Primary 53C20, 53C21, Keywords: Bakry\textendash Emery Ricci curvature, almost flat, sectional curvature, curvature bounds}
\begin{document}
\begin{abstract} 
We prove a mixed curvature analogue of Gromov's almost flat manifolds theorem for upper sectional and lower Bakry\textendash Emery Ricci curvature bounds.
\end{abstract}
\maketitle
\section{Introduction}
A celebrated Almost Flat Manifolds  theorem of Gromov \cite{Gr-almost-flat} says that for any natural $n$ there exists a constant $\eps(n)$ such that if a closed manifold $M^n$ admits a Riemannian metric of $|\sec|\le 1$ and $\diam\le \eps(n)$ then $M$ is finitely covered by a nilmanifold.  This was later refined by Ruh \cite{Ruh-almost-flat} to show that  $M$ must be infranil. 

Here by an infranilmanifold $M$ we mean the quotient of a simply-connected nilpotent Lie group $G$ by the action of a torsion free discrete subgroup $\Gamma$ of the semidirect product of $G$ with a compact subgroup of $\Aut(G)$. If $M$ is compact then a finite index subgroup $\hat \Gamma$ of $\Gamma$ lies entirely in $G$ and hence $M$ is finitely covered by the homogeneous space $G/\hat\Gamma$. Such homogeneous spaces are called nilmanifolds.

Any closed infranilmanifold admits a metric with  $|\sec|\le 1$ and $\diam\le \eps$ for any $\eps>0$. Gromov called such manifolds \emph{almost flat}.  
The purpose of this paper is to extend Gromov's theorem to the case where the lower sectional curvature bound    is replaced by the weaker assumption of a lower Bakry\textendash Emery Ricci curvature bound.

Let us elaborate on our motivation for considering this condition.

By a key observation of Gromov the class of $n$-dimensional manifolds with $\Ric\ge \lk$ is precompact in pointed Gromov\textendash Hausdorff topology for any fixed $\lk,n$. It is therefore natural to consider converging sequences of elements in this class and study their limits and their geometric and topological relationship to the elements of the sequence. The above class contains several natural subclasses. The most restrictive of these is manifolds with two sided sectional curvature bounds $\lk\le \sec\le K$. Gromov's Almost Flat Manifolds Theorem  deals with the special case when the limit is a point. A more general theory  dealing with arbitrary limits was later developed by Cheeger, Fukaya and Gromov \cite{Che-Gro-col-1,Che-Gro-col-2,CFG, Rong-survey}.

One can relax the above bounds by getting rid of the upper sectional curvature bound. The limits of manifolds in this class are Alexandrov spaces, i.e. complete inner metric spaces for which Toponogov triangle comparison holds. Manifolds that can converge to a point under a lower sectional curvature bound are called almost nonnegatively curved. These are more general than Gromov's almost flat manifolds (in particular any nonnegatively curved closed manifold is such). While their geometry and topology is not completely understood it was shown in ~\cite{KPT} that any almost nonnegatively curved $n$-manifold virtually fibers over a nilmanifold with simply connected fibers and  its fundamental groups contain a nilpotent subgroup of index $\le C(n)$.

In the most general setting when one only assumes a lower Ricci bound the limits are so-called $\RCD(\lk,n)$ metric measure spaces. These are metric measure spaces which are infinitesimally Hilbertian and satisfy the curvature dimension condition $CD(\lk,n)$ of Sturm\textendash Lott\textendash Villani. While they serve as motivation we only make very limited use of the theory of $\RCD$ spaces in the present paper.  The reader is referred to  \cite{ambrosio-gigli-savare-survey, villani-survey, Kap-Ket-18, Kap-Kell-Ket-19} for a  review and background on $\RCD$ spaces.

The class  of $\RCD(\lk,n)$ spaces contains a natural subclass which is also closed under pmGH convergence given by spaces which in addition are $\CAT(\uk)$ for some fixed $\uk\in\R$. Moreover, in analogy with Cheeger\textendash Fukaya\textendash Gromov theory it's natural to look at a more general class of  $\RCD(\lk,n)$  which are also $\CBA(\uk)$, i.e. which are $\CAT(\uk)$ only locally. 
Local structure of such spaces was investigated in \cite{Kap-Ket-18, Kap-Kell-Ket-19}. It turned out to bear many similarities with the local structure of metric spaces with two sided sectional curvature bounds in Alexandrov sense.

Just like the class of manifolds with two sided sectional curvature bounds the class of  $\RCD(\lk,N)+\CBA(\uk)$ spaces is precompact in pmGH topology.
Given the similarity of the structure theory between  $\RCD(\lk,N)+\CBA(\uk)$ spaces and its subclass of spaces with two sided Alexandrov curvature bound it is natural to wonder to what extent the convergence theory for the latter class generalizes to the former.

In the smooth settings a complete weighted Riemannian manifold   $(M^n, g, e^{-f}\cdot \mathcal H_n)$ (where $ \mathcal H_n$ is the n-dimensional Hausdorff measure) with a smooth function $f$  is  $\RCD(\lk,N)$  iff $n\le N$ and the Bakry\textendash Emery Ricci curvature $\Ric_{f,N}$ is bounded below by $\lk$ ~\cite{saintfleur,lottvillani,erbarkuwadasturm}. Recall that the 
 Bakry\textendash Emery Ricci tensor  $\Ric_{f,N}$ is given by $\Ric_{f,N}=\Ric+\Hess f-\frac{df\otimes df}{N-n}$. If $N=n$ then $f$ must be  constant and the Bakry\textendash Emery Ricci curvature reduces to the usual Ricci curvature.

The above discussions suggests that it is natural to study the following class of manifolds.
Given $N\ge 1, \lk,\uk\in \R$ consider the class $\mathfrak M_\lk^{\uk,N}$ of complete smooth weighted Riemannian manifolds $(M^n, g, e^{-f}\cdot \mathcal H_n)$  satisfying 
\begin{equation}\label{eq:cd+cba}
\text{$(M^n,g,e^{-f}\cdot \mathcal H_n)$  has $\Ric_{f,N}\ge \lk$ and  $\sec\le \uk$.}
\end{equation}

The class $\mathfrak M_\lk^{\uk,N}$  is precompact in pointed measured Gromov\textendash Hausdorff topology.  Given a sequence $M_i^n\in \mathfrak M_\lk^{\uk,N}$ pmGH converging to a space $(X,d,m)$ we wish to study the structure of this convergence, i.e. the structure of the limit spaces and the relationship between $M_i$ and $X$.

As in the situations described above a key basic case of such convergence is convergence to a point which is equivalent to having a sequence  $M_i^n\in \mathfrak M_\lk^{\uk,N}$ with $\diam M_i\to 0$. By rescaling this is equivalent to having a sequence $(M^n_i,g_i,e^{-f_i}\cdot  \mathcal H_n)$ and $0<\eps_i\to 0$ such that $\sec M_i\le \eps_i, \Ric_{f_i,N}(M_i)\ge -\eps_i$ and $\diam M_i\le \eps_i$.

The main goal of this paper is to show that  Gromov's Almost Flat Manifolds Theorem generalizes to this setting. 
Namely, we prove

\begin{theorem}[Main Theorem]\label{thm:almostflat}
For any $1<N<\infty$ there exists $\eps=\eps(N)>0$ such that the following holds.

If $(M^n,g,e^{-f}\cdot \mathcal H_n)$ is a weighted closed Riemannian manifold with $n\le N$ and $\sec\le\eps, \diam\le \eps $ and $\Ric_{f,N}\ge -\eps$ then $M^n$ is diffeomorphic to an infranilmanifold.
\end{theorem}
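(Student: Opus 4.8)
The plan is to argue by contradiction and to recover the conclusions of the theorems of Gromov \cite{Gr-almost-flat} and Ruh \cite{Ruh-almost-flat}, with the lower sectional bound used there replaced, step by step, either by the local $\CAT$ geometry coming from $\sec\le\eps$ or by the measure\textendash theoretic control coming from $\Ric_{f,N}\ge-\eps$. So suppose the statement fails for some fixed $N$: there are weighted closed manifolds $(M_i^n,g_i,e^{-f_i}\cdot\mathcal H_n)$ with $n\le N$, $\sec\le\eps_i$, $\diam\le\eps_i$, $\Ric_{f_i,N}\ge-\eps_i$ and $\eps_i\to0$, none of which is diffeomorphic to an infranilmanifold.

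\emph{Step 1 (the fundamental group).} Since $n\le N$, each $(M_i^n,g_i,e^{-f_i}\cdot\mathcal H_n)$ is an $\RCD(-\eps_i,N)$ space; in particular the weighted Bishop\textendash Gromov inequality holds. I would then invoke the generalized Margulis lemma in this setting — its proof uses only weighted Bishop\textendash Gromov, the splitting theorem (available for $\RCD(0,N)$) and a gradient estimate, all valid under a lower Bakry\textendash Emery Ricci bound — to produce $\eps_0(N)>0$ and $w(N)\in\N$ such that, once $\eps_i<\eps_0$, the group $\pi_1(M_i)$, being generated by loops of length $\le\diam M_i\le\eps_i$, contains a torsion\textendash free nilpotent normal subgroup $\Lambda_i$ with $[\pi_1(M_i):\Lambda_i]\le w(N)$ and nilpotency step $<N$. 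Let $\hat M_i\to M_i$ be the cover with $\pi_1(\hat M_i)=\Lambda_i$; curvature bounds lift and $\diam\hat M_i\le w(N)\eps_i\to0$. It now suffices to show $\hat M_i$ is diffeomorphic to a nilmanifold for $i$ large: the standard extension\textendash theoretic argument — Malcev rigidity together with the fact that the finite quotient $\pi_1(M_i)/\Lambda_i$ is realized by automorphisms of the Malcev completion of $\Lambda_i$ — then promotes this to the statement that $M_i$ is diffeomorphic to an infranilmanifold, contradicting the choice of $M_i$.

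\emph{Step 2 (collapse of the universal cover).} Rescale $g_i$ so that $\diam\hat M_i=1$; the hypotheses become $\sec\le\delta_i$, $\Ric_{f_i,N}\ge-\delta_i$ with $\delta_i\to0$. By weighted Bishop\textendash Gromov and Gromov precompactness, after passing to a subsequence the pointed universal covers converge in the measured equivariant Gromov\textendash Hausdorff sense, $(\tilde M_i,\tilde p_i,\Lambda_i)\to(Y,y_\infty,G)$, with $Y/G$ compact. By stability of the synthetic curvature conditions under this convergence and the structure theory of $\RCD\cap\CBA$ spaces from \cite{Kap-Ket-18,Kap-Kell-Ket-19}, $Y$ is an $\RCD(0,N)\cap\CBA(0)$ space, and $G$ is a nilpotent group (nilpotency of bounded step passes to equivariant limits) acting properly discontinuously and cocompactly on it. Iterating the $\RCD(0,N)$ splitting theorem along the central series of $G$ — a nontrivial central $\Z$ has bounded displacement on $Y$, hence produces a line and an $\R$\textendash factor — one concludes $Y\cong\R^k$ for some $k\le n$, with $G$ a cocompact, hence virtually abelian, closed subgroup of $\Isom(\R^k)$, carrying in general a nontrivial identity component that records the collapse already present on the cover.

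\emph{Step 3 (from convergence to a diffeomorphism).} With the equivariant picture $(\tilde M_i,\Lambda_i)\to(\R^k,G)$ in hand, I would run the Cheeger\textendash Fukaya\textendash Gromov collapsing construction \cite{CFG} in its Ruh\textendash type sharpening \cite{Ruh-almost-flat}, feeding in the two hypotheses in place of a two\textendash sided sectional bound. On metric balls of radius much smaller than $\pi/\sqrt{\delta_i}$ the bound $\sec\le\delta_i$ makes $\tilde M_i$ a $\CAT(\delta_i)$ space, which is exactly what is needed for Gromov's commutator contraction — iterated commutators of elements of $\Lambda_i$ with small displacement at $\tilde p_i$ have geometrically decaying displacement — and for the construction of an almost flat affine connection on $\hat M_i$; the bound $\Ric_{f_i,N}\ge-\delta_i$, through weighted volume comparison and the local (manifold, bi\textendash Lipschitz) regularity of $\RCD\cap\CBA$ spaces from \cite{Kap-Ket-18,Kap-Kell-Ket-19}, supplies the non\textendash collapsing and regularity input that classically comes from $\sec\ge-1$. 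This yields a $G$\textendash invariant fibration $\hat M_i\to\R^k/G$ whose fibers are nilmanifolds (their fundamental groups being torsion\textendash free nilpotent) and whose structure group reduces, forcing $\hat M_i$ itself to be a nilmanifold for $i$ large, which with Step 1 is the desired contradiction. The hard part is precisely this step: every place where the arguments of Gromov, of Ruh, and of the fibration theorem invoke the lower sectional bound — Toponogov comparison, Rauch estimates for the exponential map, regularity of the orthonormal frame bundle — has to be re\textendash established from $\sec\le\eps$ together with $\Ric_{f,N}\ge-\eps$, and the local structure theory of $\RCD(\lk,N)\cap\CBA(\uk)$ spaces is the essential substitute; the Margulis lemma of Step 1 in the weighted setting is a secondary technical point.
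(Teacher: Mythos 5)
Your proposal takes a genuinely different route from the paper, and it contains gaps that would need to be closed before it could be called a proof.

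\emph{Where you diverge from the paper.} The paper does not run (or attempt to run) a Cheeger\textendash Fukaya\textendash Gromov/Ruh construction under the mixed hypotheses. Instead, it first establishes the Asphericity Theorem (Theorem~\ref{asphericity-thm}): every ball of definite radius in $\tilde M$ is bi-Lipschitz close to the Euclidean ball, so in particular $M$ is aspherical and $\tilde M$ converges to $\R^n$ with $n$ the \emph{full} dimension. This is proved via the Fibration Theorem (Theorem~\ref{pointed fibration-thm}) and the Induction Theorem (Theorem~\ref{thm:induction}), whose core technical input is the pseudo-cover construction together with the $\RCD$ splitting theorem and Lemma~\ref{lem-bilip-almost-split}. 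With asphericity in hand, the homeomorphism statement follows from Lee\textendash Raymond plus the Borel conjecture for virtually nilpotent groups (Farrell\textendash Hsiang, Freedman\textendash Quinn), and diffeomorphism is obtained by a Ricci flow smoothing argument using Perelman pseudolocality applied to the warped product $E=M\times_{e^{-f/q}}T^q_\eps$. There is no affine connection or Ruh-type argument anywhere.

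\emph{Concrete gaps in your argument.} (1) In Step~1 you invoke a Margulis lemma directly for weighted manifolds with $\Ric_{f,N}\ge-\eps$ and assert it yields a \emph{torsion-free} nilpotent normal subgroup of bounded index. The Margulis lemma, even granting a weighted version, gives only virtual nilpotency of bounded step; torsion-freeness of $\pi_1(M)$ does not follow from curvature hypotheses alone but from asphericity of $M$, which you have not established. The paper avoids having to prove a new Margulis lemma by passing to the warped product $E=M\times_u T^q_\eps$ with an honest lower Ricci bound (via Lemma~\ref{lem-dens-semiconcave} and the O'Neill formulas), then applying the Kapovitch\textendash Wilking Margulis lemma to $E$. (2) In Step~2 you conclude only $Y\cong\R^k$ with $k\le n$; the central difficulty, which occupies most of the paper, is showing that one can pass to bounded covers and extract the fact that the universal cover limit is $\R^n$, i.e.\ $k=n$. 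Your splitting-theorem iteration along the central series of $G$ does not give this, and the issue is genuine: the action of $G$ on $Y$ can have a large identity component (collapsed directions) and the equivariant limit of the $\tilde M_i$ can have dimension strictly less than $n$ unless one first shows non-collapse of the cover, which is precisely what the pseudo-cover/Induction Theorem machinery establishes. (3) In Step~3 you acknowledge that adapting the CFG/Ruh construction is ``the hard part'' and propose the $\RCD\cap\CBA$ local structure theory as the substitute for the lower sectional bound, but no argument is given. Gromov's commutator estimates and Ruh's construction of the flat connection both use two-sided sectional bounds in essential ways (e.g.\ control of holonomy of short loops, injectivity radius estimates via Rauch from below), and it is not at all clear these can be recovered under $\sec\le\eps$, $\Ric_{f,N}\ge-\eps$; indeed the paper deliberately routes around this by proving asphericity and then invoking topological rigidity plus Ricci flow, rather than constructing an infranil structure geometrically.
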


As was mentioned above  in the case $n=N, f=\const$ the bound $\Ric_{f,N}\ge \lk$ reduces to  $\Ric\ge \lk$ which combined with the upper sectional curvature bound  trivially implies that 
 $\sec   \ge \lk-(n-2)\uk$. In particular if $\sec\le \eps$ and $\Ric\ge -\eps$ then $\sec\ge -(n-1)\eps$. Therefore if $N=n$ in Theorem~\ref{thm:almostflat} then the result trivially follows from the original Almost Flat Manifolds Theorem.
However, in the general case $n<N$ it's easy to see that spaces satisfying \eqref{eq:cd+cba} have no uniform lower sectional curvature bounds. In Lemma~\ref{lemma-ex}  for any $N>2$ we construct a sequence $(T^2,g_i,e^{-f_i}\cdot \mathcal H_n)$  with $\sec_{g_i}\le 1/i$, 
$\Ric_{f_i,N}\ge -1/i$, $\diam(g_i)\le 1$ and minimum of sectional curvature going to $-\infty$ as $i\to\infty$.

 It is well known that  a solvable subgroup of the fundamental group of a closed manifold of nonpositive curvature must be virtually abelian \cite[Theorem 7.8]{BH99}. Therefore if the upper sectional curvature bound in Theorem 1 is 0 then the nilpotent Lie group which is the universal cover of $M$ is abelian i.e. it's isomorphic to $\R^n$. This yields
 
 \begin{corollary}\label{main:cor}
 For any $1<N<\infty$ there exists $\eps=\eps(N)>0$ such that the following holds.

If $(M^n,g,e^{-f}\cdot \mathcal H_n)$ is a weighted closed Riemannian manifold with $n\le N, \sec\le 0, \diam\le \eps $ and $\Ric_{f,N}\ge -\eps$ then $M^n$ admits a flat metric. In particular $M$ is diffeomorphic to the quotient of $\R^n$ by a Bieberbach group.

 \end{corollary}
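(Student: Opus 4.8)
The plan is to deduce Corollary~\ref{main:cor} from Theorem~\ref{thm:almostflat} together with the solvable subgroup theorem for nonpositively curved manifolds and Mal'cev rigidity of nilpotent lattices. First I would apply Theorem~\ref{thm:almostflat} with the same $\eps=\eps(N)$: since $\sec\le 0\le\eps$, the hypotheses of the Main Theorem are satisfied, so $M^n$ is diffeomorphic to an infranilmanifold $G/\Gamma$, where $G$ is a simply connected nilpotent Lie group of dimension $n$ and $\Gamma\le G\rtimes K$ is torsion-free, discrete and cocompact with $K\le\Aut(G)$ compact. Then $\pi_1(M)\cong\Gamma$, and as recalled in the introduction $\Gamma$ has a finite-index subgroup $\hat\Gamma\le G$ which is a cocompact lattice in $G$; being a discrete subgroup of a nilpotent Lie group, $\hat\Gamma$ is finitely generated, torsion-free and nilpotent.

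Next I would use $\sec\le 0$: the given metric makes $M$ a closed nonpositively curved manifold, so $\pi_1(M)=\Gamma$ acts geometrically on a Hadamard manifold, hence on a $\CAT(0)$ space. Since $\hat\Gamma$ is nilpotent, in particular solvable, \cite[Theorem~7.8]{BH99} gives that $\hat\Gamma$ is virtually abelian; pick a finite-index abelian subgroup $A\le\hat\Gamma$. Then $A$ is again a cocompact lattice in $G$, and Mal'cev rigidity (commensurable finitely generated torsion-free nilpotent groups share the same Mal'cev completion, and the completion of an abelian such group is $\R^k$) forces $G$ to be abelian; equivalently, the Zariski closure of the abelian lattice $A$ in the unipotent group $G$ is all of $G$, so $G$ is abelian. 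Hence $G\cong\R^n$, and $M$ is diffeomorphic to $\R^n/\Gamma$ with $\Gamma\le\R^n\rtimes K$ and $K\le\mathrm{GL}(n,\R)$ compact.

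Finally I would reduce the structure group: every compact subgroup of $\mathrm{GL}(n,\R)$ is conjugate into $O(n)$, so after conjugating the affine action of $\Gamma$ on $\R^n$ we may assume $\Gamma\le\R^n\rtimes O(n)=\Isom(\R^n)$, acting freely, properly discontinuously and cocompactly; thus $\Gamma$ is a Bieberbach group, $\R^n/\Gamma$ is a closed flat Riemannian manifold, and transporting its flat metric through the diffeomorphism $M\cong\R^n/\Gamma$ shows that $M$ admits a flat metric and is diffeomorphic to a quotient of $\R^n$ by a Bieberbach group. The entire substance of the argument lies in Theorem~\ref{thm:almostflat}; the only steps needing a little care are the passage from ``$\hat\Gamma$ virtually abelian and nilpotent'' to ``$G$ abelian'' and the conjugation of the compact group $K$ into $O(n)$, both of which are classical, so I do not expect a genuine obstacle here.
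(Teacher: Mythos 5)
Your proposal is correct and follows essentially the same route as the paper: apply Theorem~\ref{thm:almostflat}, invoke \cite[Theorem 7.8]{BH99} to conclude that the nilpotent finite-index subgroup of $\pi_1(M)$ is virtually abelian, and deduce that the nilpotent Lie group $G$ is $\R^n$. You supply a few more details (Mal'cev rigidity to pass from the abelian lattice to $G$ abelian, and the conjugation of the compact holonomy group into $O(n)$) than the paper's one-paragraph derivation, but the logical skeleton is identical.
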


Let us describe the strategy of the proof of the main theorem. The main part of the proof is to show that under the assumptions of the theorem $M$ is aspherical (i.e. its universal cover is contractible)  and the universal cover has large injectivity radius. This follows from 
 Theorem ~\ref{asphericity-thm} (Asphericity Theorem).
 The proof of the Asphericity Theorem occupies the bulk of the paper. Let us note that the same statement also takes a large part of Gromov's original proof of the Almost Flat Manifolds theorem. Our situation is harder as the assumptions are weaker and some tools used in Gromov's proof are not available.

A key technical result we prove first is 
a fibration theorem similar to the fibration theorem in ~\cite{CFG} under the assumptions of two-sided sectional curvature bounds. More generally, Yamaguchi  showed \cite{Yam} that the fibration theorem holds under just lower sectional curvature bound. Before we state the fibration theorem we need the following definition.
 
 \begin{definition}
A smooth map $f\co M\to N$ between smooth Riemannian manifolds is an $\eps$-almost Riemannian submersion if for any $p\in M$ the map $d_pf\co T_pM\to T_{f(p)}N$ is onto and satisfies that for any $v\in T_pM$ orthogonal to $\ker d_pf$ it holds that
\[
\frac{1}{1+\eps}|v|\le |d_pf(v)|\le (1+\eps)|v|
\]
\end{definition}

 Note that we \emph{do not} require $f$ to be onto. If $f$ is onto we will indicate that by a double headed arrow $f\co M\onto N$. For $p\in M$ and $v\in T_pM$ we call $v$ \emph{vertical} if $v\in \ker d_pF$, i.e. $v$ is tangent to the fiber of $f$ passing through $p$.
 We call $v$ \emph{horizontal} if $v$ is orthogonal to $\ker d_pF$.
 Any $\eps$-almost Riemannian submersion is $(1+\eps)$-Lipschitz. 
 Further if both $M,N$ are complete then any curve $\gamma$ in $N$ starting at $f(p)$ admits a horizontal lift $\bar\gamma$ starting at $p$ such that $L(\bar\gamma)\le (1+\eps)L(\gamma)$.

 In particular $f$ is $(1+\eps)$-co-Lipschitz, i.e. $f(B_r(p))\supset  B_{\frac{r}{1+\eps}}(f(p))$ for all  $r$.

  We will adopt the notation that $\R_{+}\times \R^k\ni (\varepsilon,x)\mapsto  \vk(\varepsilon|x)\in \R_{+}$ denotes a non-negative function satisfying that, for any fixed $x=(x_1,\ldots,x_k)$, $\lim_{\varepsilon\to 0} \vk(\varepsilon|x)=0 $. Similarly $\Vk(\eps|x)$ will denote a function  $\R_{+}\times \R^k\to \R_{+}$ such that  $x\in \R^k$, $\lim_{\varepsilon\to 0} \Vk(\varepsilon|x)=\infty$.

We prove
  
\begin{theorem}[Compact Fibration theorem]\label{fibration-thm}

Let $M_i^n\to B^m$ in  pointed Gromov\textendash Hausdorff topology where $B^m$ is a smooth closed Riemannian manifold  and all $(M_i, g_i, e^{-f_i}\cdot  \mathcal H_n)$ satisfy \eqref{eq:cd+cba}. 

Let $h_i\co M_i\to B$ be $\eps_i$-GH-approximations with $\eps_i\to 0$ as $i\to \infty$.

Then there exist $\delta_i\to 0$ as $i\to\infty$ and smooth maps $\pi_i\co M_i\to B$ such that 
$\pi_i\co M_i\to B$ is a $\delta_i$-almost Riemannian submersion  which is $\delta_i$-uniformly close to $h_i$ for all $i$.

\end{theorem}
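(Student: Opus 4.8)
The plan is to follow the classical strategy of Cheeger--Fukaya--Gromov, but with the upper sectional curvature bound replacing one side and the Bakry--Emery Ricci bound being essentially irrelevant at this step. Since $B$ is a fixed closed manifold and the convergence $M_i \to B$ is noncollapsed in the sense that $\dim M_i = n$ stays bounded while $\dim B = m$ may be smaller, the fibers are collapsing but $B$ itself does not degenerate. First I would fix a small scale $r>0$ (small compared to the injectivity radius and a lower bound on $\sec$-comparison geometry of $B$) and choose, for each point $b$ in a fine net of $B$, a harmonic-type or distance-type chart; more precisely, since $B$ is smooth, pull back local coordinates on $B$ via the GH-approximation $h_i$ and regularize. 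The standard device is: embed $B$ smoothly into some $\R^L$, compose $h_i$ with that embedding to get an almost-distance-preserving map $M_i \to \R^L$, mollify it at scale $r$ using the local covering geometry (here the upper sectional curvature bound gives the needed two-sided control on the exponential map at small scales, and the lower Ricci bound gives volume/measure regularity for the mollification), and then project back to (a tubular neighborhood of) $B \subset \R^L$ using the nearest-point projection. Call the resulting smooth map $\pi_i \co M_i \to B$; it is automatically $\delta_i$-close to $h_i$ with $\delta_i \to 0$ by construction.

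The main work is to verify that $\pi_i$ is a $\delta_i$-almost Riemannian submersion. For this I would estimate $d_p\pi_i$ on horizontal vectors. The key point is a gradient estimate: because $h_i$ is an $\eps_i$-GH approximation, the coordinate functions it pulls back are $\eps_i$-almost distance functions, and after mollification at the fixed scale $r$ they become smooth functions whose gradients have norm close to $1$ and whose pairwise inner products are close to the corresponding quantities on $B$ — this is where the upper sectional bound $\sec \le \uk$ enters essentially, since it controls how much geodesics spread and hence controls the Hessian/second-order behavior of mollified distance functions from above (one gets that mollified distance functions are uniformly close to being Riemannian submersion components). One should run this in the universal cover / local covers $B_r(p) \subset M_i$ where the local geometry has bounded sectional curvature from above and bounded measure, apply interior estimates for the mollification there, and pass the estimates back down. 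A compactness/contradiction argument is the cleanest packaging: if the conclusion failed for some sequence, one extracts a further limit, which by the structure theory of $\RCD(\lk,N)+\CBA(\uk)$ spaces (from \cite{Kap-Ket-18, Kap-Kell-Ket-19}) and the smoothness of $B$ forces the limiting map to be a genuine Riemannian submersion onto $B$ at scale $r$, contradicting the assumed failure.

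Concretely the steps are: \textbf{(i)} fix $r$ and a smooth embedding $\iota \co B \hookrightarrow \R^L$ with tubular neighborhood and nearest-point projection $P$; \textbf{(ii)} define $\tilde\pi_i = \iota \circ h_i \co M_i \to \R^L$, a $\Psi(\eps_i)$-almost isometric map onto $\iota(B)$ up to scale; \textbf{(iii)} mollify each component of $\tilde\pi_i$ at scale $r$ using a fixed cutoff and the Riemannian measure of $M_i$, working in local covers where $\sec \le \uk$ gives uniform upper bounds on the mollification kernel's distortion and $\Ric_{f,N}\ge -\eps$ gives Bishop--Gromov volume control, obtaining smooth $\hat\pi_i \co M_i \to \R^L$ that is $\Psi(\eps_i,r)$-close to $\tilde\pi_i$ and has the property that $\langle \nabla \hat\pi_i^a, \nabla \hat\pi_i^b\rangle$ is within $\Psi(\eps_i\mid r)$ of $\langle \nabla x^a, \nabla x^b\rangle$ on $B$ after identification; \textbf{(iv)} set $\pi_i = P\circ \hat\pi_i$, which is smooth and, since $P$ is a fixed smooth submersion-like retraction onto $\iota(B)$ and $\hat\pi_i$ lands $\Psi(\eps_i\mid r)$-close to $\iota(B)$, inherits the almost-submersion property; \textbf{(v)} conclude $\pi_i$ is a $\delta_i$-almost Riemannian submersion $\delta_i$-close to $h_i$ where $\delta_i = \Psi(\eps_i\mid r) + \Psi(r)$, and finally let $r = r_i \to 0$ slowly so that $\delta_i \to 0$.

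The hard part will be step \textbf{(iii)}: getting the gradient and inner-product estimates for the mollified coordinate functions using only an \emph{upper} sectional bound plus a \emph{lower} Bakry--Emery Ricci bound, rather than the two-sided sectional bounds of \cite{CFG} or the lower sectional bound of \cite{Yam}. The upper bound $\sec\le\uk$ is what one genuinely needs for the \emph{second-order} control (it bounds Hessians of distance functions from above via the Rauch comparison / $\CAT$-type estimates in the local covers), while the lower Ricci bound suffices for the \emph{measure-theoretic} inputs (volume doubling, segment inequality, and the almost-rigidity that makes mollified almost-distance functions close to harmonic almost-linear functions). Assembling these two ingredients into the submersion estimate — and in particular checking that no lower sectional bound is secretly needed — is the crux; I expect to handle it by a blow-up argument at the fixed scale $r$, using that pointed limits of the local covers are $\RCD(0,N)+\CBA(\uk)$ spaces whose structure is understood well enough (splitting theorem, rectifiability, smooth Riemannian structure on the regular set) to identify the limiting mollified functions as coordinates of a Riemannian submersion onto $B$.
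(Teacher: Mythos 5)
Your plan is in the CFG spirit but follows a genuinely different route from the paper, and the crucial step~(iii) has a real gap as you've described it.

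\textbf{What the paper does.} The paper first proves a Local Fibration Theorem on $\delta$-balls in $M_i$. The local fibration map is a \emph{strainer map} $\Phi_i(x)=(d(x,a_1^i),\ldots,d(x,a_m^i))$ to points $a_j^i$ tracking a strainer near the boundary of a mesoscopic ball; no mollification is used except a negligible smoothing of the reference points. The key mechanism is to lift $\Phi_i$ through the \emph{pseudo-cover} $\exp_p\colon B_{\Rk{\uk}}(\cvertex)\subset T_pM_i\to M_i$, which is a local diffeomorphism precisely because $\sec\le\uk$ controls the conjugate radius, not because it controls Hessians. In the pseudo-cover the geometry is automatically \emph{non-collapsed}. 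Lemma~\ref{lem-bilip-almost-split} then combines the splitting theorem (Theorem~\ref{splitting-thm}, the only place the Bakry--Emery bound is used) with a strainer dimension count to show that large balls around $\cvertex$ in the pseudo-cover are Lipschitz--Hausdorff close to balls in $\R^n$. Lemma~\ref{lem-almost-linear} then deduces, via the first variation formula and angle comparison, that the lifted strainer map is almost linear and an almost Riemannian submersion. Finally, Lemma~\ref{local-fibr-C1-close} shows any two such local fibration maps are $C^1$-close, which is what makes the center-of-mass gluing into a global $\pi_i$ work.

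\textbf{Where your proposal diverges.} You replace the local strainer map + gluing by a single global mollification of $\iota\circ h_i$ followed by nearest-point projection to $\iota(B)\subset\R^L$. This is a legitimate way to avoid the gluing step, but it shifts all the burden to step~(iii). Your stated mechanism for the $C^1$ estimate --- ``the upper sectional bound \ldots bounds Hessians of distance functions from above \ldots one gets that mollified distance functions are uniformly close to being Riemannian submersion components'' --- is not how it works, and cannot work as stated: a one-sided Hessian bound gives semiconcavity, not the two-sided $C^1$ control an almost Riemannian submersion demands. The paper never uses a pointwise Hessian bound. Instead, the upper sectional bound enters only through the conjugate radius estimate that makes $\exp_p$ a local diffeomorphism, producing a non-collapsed pseudo-cover; the $C^1$/biLipschitz control then comes from GH-closeness of the pseudo-cover to $\R^n$ plus first variation, not from any curvature comparison applied to the map itself. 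Your blow-up argument at scale $r$ is correct in spirit (it is morally Lemma~\ref{lem-bilip-almost-split}), but to close it you must (a) say precisely that the ``local cover'' is the pseudo-cover via $\exp_p$, since genuine covers at that scale need not exist; (b) show the non-collapsed limit is all of $\R^n$ rather than $\R^k$ for $k<n$, which requires the strainer dimension-count argument in Lemma~\ref{lem-bilip-almost-split}; and (c) translate GH-closeness of the pseudo-cover to $\R^n$ into $C^1$ control of your mollified map --- which is exactly the content of Lemmas~\ref{lem-almost-linear} and~\ref{local-fibr-C1-close}, so you cannot actually avoid proving an analogue of the $C^1$-closeness lemma even with a global mollification.

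\textbf{Bottom line.} The overall architecture (non-collapsed blow-up, splitting theorem, interplay of the two curvature bounds) is right, and the global-mollification-then-project packaging is an attractive alternative to the paper's glue-local-strainer-maps packaging. But your justification of the crucial gradient estimate rests on a Hessian-comparison argument that is unavailable here, and the actual mechanism (pseudo-cover $\Rightarrow$ non-collapse $\Rightarrow$ splitting $\Rightarrow$ $\R^n$ $\Rightarrow$ first-variation $C^1$ control) needs to be carried out explicitly. As written, step~(iii) has a genuine gap.
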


Note that a proper submersion  is a fiber bundle, hence the name of the theorem.
As a consequence of the proof of the Main Theorem we also prove that the fibers in the Fibration Theorem are \emph{homeomorphic} to infranilmanifolds (Theorem~\ref{infranil-fibers}).


A key ingredient in the proof of the Main Theorem overall and of the Fibration Theorem in particular is the following observation of Gromov which is also central to the original proof of Gromov's Almost Flat Manifolds Theorem. Given a complete manifold $(M^n, g)$ of $\sec\le \eps$ let $p\in M$ and let $\exp_p\co T_pM\to M$ be the exponential map. The condition $\sec\le \eps>0$ implies that the conjugate radius of $M$ is at least $\frac{\pi}{\sqrt{\eps}}$ and hence $\exp_p$ is a local diffeomorphism on $B_{\frac{\pi}{\sqrt{\eps}}}(\cvertex)\subset T_pM$. We can therefore pull back the metric $g$ from $M$ to $B_{\frac{\pi}{\sqrt{\eps}}}(\cvertex)$. The resulting non-complete manifold $\hat M=(B_{\frac{\pi}{\sqrt{\eps}}}(\cvertex)\,\exp_p^*(g))$ still has $\sec\le \eps$ but also has injectivity radius at $\cvertex$ equal to $\frac{\pi}{\sqrt{\eps}}$. 

The projection map $\exp_p\co \hat M\to M$ is a so-called pseudo-cover with the pseudo-group equal to the set of short geodesic loops at $p$ acting as deck transformations. The advantage of working with the pseudo-cover $\hat M$ is that even if $M$ is collapsed the pseudo-cover $\hat M$ is not collapsed. This means that all collapsing happens entirely  due to the presence of short geodesic loops. This reduces the local structure of collapsed manifolds with upper sectional and lower Bakry\textendash Emery bounds to the study of local pseudo-covers and the corresponding pseudo-group actions.

With the Fibration Theorem at our disposal the proof of the Asphericity Theorem proceeds along the following lines.

Given a  sequence $(M^n_i,g_i, e^{-f_i}\cdot  \mathcal H_n)$ with $\sec_{M_i}\le \eps_i,\Ric^{M_i}_{f_i,N}\ge -\eps_i$ and $\diam(M_i)\le \eps_i$ we rescale the sequence to have diameter 1 and pass to the limit. Working with the pseudo-covers one can show that passing to actual covers of uniformly bounded order we can assume that the limit is a flat torus $T^{l_1}$ with $l_1>0$. This implies that $\pi_1(M_i)$ surjects onto $\pi_1(T^{l_1})=\Z^{l_1}$. Going back to the original unrescaled sequence, using the surjection $\pi_1(M_i)\to \Z^{l_1}$  we can now find some finite covers $\bar M_i$ of $M_i$ which converge to a space of diameter 1 which again must be a flat torus $T^{q_1}$. Using the Fibration Theorem we get an almost Riemannian submersion $\pi_i\co \bar M^i\to T^{q_1}$. Taking the covers corresponding to the kernel of $(\pi_i)_*$ on the fundamental group we get almost Riemannian submersions $\check \pi_{i,1}\co \check M_{i,1}\to \R^{q_i}$ with small fibers. Note that due to contractibility of $\R^{q_i}$ the inclusion of the fibers of $\check \pi_{i,1}$ is a homotopy equivalence. 
We can now repeat the same process but with the fibers of $\check \pi_{i,1}$ instead of $M^i$ to get some finite covers of $\check M_{i,1}$ that converge to $\R^{q_1}\times T^{q_2}$ with $q_2>0$. Again taking the covers corresponding to the kernels of the induced maps on $\pi_1$ we get covers $\check M_{i,2}$  that converge to $\R^{q_1+q_2}$ etc.

This process will terminate after finitely many steps because the dimensions of all the tori showing up in the construction are positive. At the final step we get 
almost Riemannian submersions $\check \pi_{i,k}\co \check M^n_{i,k}\to \R^n$ with small fibers such that still the inclusions of the fibers into the total spaces are homotopy equivalences. For dimension reasons the fibers must be 0-dimensional and since they are connected they must be points.

The above procedure is formalized in the Induction Theorem (Theorem~\ref{thm:induction}).

Once the Asphericity Theorem is proven we use a Ricci flow argument to show that the metric on a manifold satisfying the assumptions of the Main Theorem for sufficiently small $\eps$ can be deformed into an almost flat metric in the original sense of Gromov. This is done as follows.

We use the following construction due to Lott   \cite{lobaem,Lott-optimal}.  Look at the warped product metric on  $E=\bar M\times_{e^{-f/q}}T^q_\eps$ where $q=N-n$ (we can assume that $N$ is an integer by increasing by at most 1) and $T^q_\eps$ is a standard flat torus of diameter $\eps$.
Then  the horizontal Ricci tensor of $E$ is given by the Bakry\textendash Emery Ricci tensor on $M$. Moreover we show that our curvature assumptions on $M$ imply certain semiconcavity of the warping function which yields that $E$ has small diameter and almost nonnegative Ricci curvature. 
The Asphericity Theorem implies that large balls in the universal cover of $E$ are Lipschitz close to the corresponding balls in $\R^N$. In particular they have almost the same isoperimetric constants.
Now a Ricci flow argument using Perelman's Pseudo-locality shows that time 1 Ricci flow turns $E$ into an almost flat manifold in
 Gromov's sense. The Ricci flow preserves the warped product structure and hence we get an almost flat metric on $M$ as well. 
 
 The paper is structured as follows. In Section~\ref{sec:pseudo-covers} we give background on pseudo-groups and pseudo-covers. In Section \ref{sec:  RCD+CAT} we study the topology and geometry of almost flat mixed curvature manifolds with large injectivity radius. In Section ~\ref{sec:fibration thm} we prove the Fibration Theorem. In Section~\ref{sec: Induction thm} we prove the Induction Theorem and the Asphericity Theorem.
 In Section~\ref{sec:main-homeo} we prove the Main Theorem up to homeomorphism. In Section~\ref{sec:ricci flow} we prove the Main Theorem up to diffeomorphism. In Section~\ref{sec:example} we construct an example of a sequence of mixed curvature almost flat metrics $g_i$ on $T_2$ such that the lower sectional curvature bound of $g_i$ converges to $-\infty$ as $i\to\infty$.
 

The author would like to thank   Igor Belegradek, John Lott, Robert Haslhofer, Christian Ketterer  and Anton Petrunin for helpful conversations and suggestions. The author is grateful to Richard Bamler for suggesting the method of getting the upper diameter bound in the proof of Theorem ~\ref{thm:almost-flat-Ricci-flow}. The author is funded by a Discovery grant from NSERC.

\tableofcontents
\section{Pseudo-covers}\label{sec:pseudo-covers}

The following definition is due to Gromov \cite{Gr-almost-flat}.\footnote{Some texts use a different notion of a pseudo-group.}
\begin{definition}\label{defn:pseudo-group}

A pseudo-group is a set $G$ with a binary operation $\star\co G\times G\supset A\to G$ which satisfies the following properties
\begin{enumerate}
\item There is a unique $e\in G$ such that for any $g\in G$ the products $e\star g$ and $g\star e$ are defined and are equal to $g$.
\item for any $g_1,g_2,g_3\in G$ it holds that $(g_1\star g_2)\star g_3=g_1\star (g_2\star g_3)$ provided both products are defined.
\item for any $g\in G$ there is $h\in G$ such that $h\star g=g\star h=\e$.
\end{enumerate}
\end{definition}

Any group is obviously a pseudo-group and more generally any subset $G$ in a group $H$ which contains the identity and is closed under taking inverses is a pseudo-group with respect to the group operation on $H$.
Homomorphisms of pseudo-groups are defined in an obvious way.

Any pseudo-group $(G,\star)$ canonically generates a group $\bar G$ as follows. The group $\bar G$ is defined to be the quotient of the free group $\langle a_g | g\in G\rangle$ on elements of $G$  modulo  the normal subgroup $N$ generated by all relations of the form $a_{g_1}\cdot a_{g_2}\cdot a_{g_3}^{-1}$ whenever $g_1,g_2,g_3\in G$ satisfy $g_1\star g_2=g_3$.
There is an obvious canonical homomorphism  $\phi\co G\to \bar G$ given by  $g\mapsto a_g $ mod $N$.
This homomorphism obviously satisfies the following universal property: Given a homomorphism $\rho\co G\to H$ where $H$ is a group there is a unique group homomorphism $\bar \rho \co \bar G\to H$ such that $\rho=\bar\rho \circ \phi$. The map  $\phi$ need not be 1-1 (see Remark~\ref{rem: non-injective} below).
 However, given an element $g\in G$ if we can find a homomorphism  $\rho\co G\to H$ where $H$ is a group such that $\rho(g)\ne e$ then the above universal property implies that $\phi(g)\ne e $ as well. We will make use of this simple but important fact in the proof of the main theorem.

Next we outline the construction of pseudo-groups and pseudo-covers that  naturally arise in the context of manifolds with two sided curvature bounds. They play a key role both in  the original proof of Gromov's Almost Flat manifolds theorem and in the proof of the Theorem~\ref{thm:almostflat}.
This construction is explained in detail in \cite[Section 2]{BuKa}. 

Let $(M^n, g)$ be complete with $\sec\le \uk$. Let $p\in M$ and let $\exp\co T_pM\to M$ be the exponential map at $p$. Since $\sec\le \uk$ the conjugate radius of $M$ is infinite if $\uk\le 0$ and is at least $\frac{\pi}{\sqrt{\uk}}$ if $\uk >0$.  
Set

\[
\Rk{\uk}:=\begin{cases}
\infty\quad\text{ if } \uk\le 0\\
\frac{\pi}{2\sqrt{\uk}}\quad\text{ if } \uk> 0
\end{cases}
\]

Then $\exp$ is nondegenerate on $\hat M=B_{\Rk{\uk}}(\cvertex)\subset T_pM$ and we can pull back the  Riemannian metric from $g$ from $M$ to $\hat M$. From now on we will consider $\hat M$ with the pullback metric $\hat g=\exp^*(g)$. In order to avoid confusion with the exponential map of $\hat M$ we will denote the projection $\exp\co \hat M\to M$ by $\pi$.

Since in case $\uk\le 0$ we have that $\Rk{\uk}=\infty$ and the map $\exp\co T_pM\to M$ is an actual universal cover of $M$, we will be primarily interested in the case $\uk>0$.

By construction we have that the injectivity radius of $\cvertex$ in $\hat M$ is equal to $\Rk{\uk}$. If $R<\Rk{\uk}$ then the sectional curvature bound $\sec(\hat M)\le \uk$ implies that the boundary of the ball $\bar B_R(\cvertex)$ is locally convex. Thus $\bar B_R(\cvertex)$  is locally $\CAT(\uk)$.  Furthermore, Rauch comparison implies that the radial contraction towards $\cvertex$ is 1-Lipschitz on $\bar B_R(\cvertex)$. Hence any closed curve of length $\le 4\Rk{\uk} =\frac{2\pi}{\sqrt K}$ in $\bar B_R(\cvertex)$ can be contracted to a point through curves of length $\le 4\Rk{\uk}$. By the globalization theorem for $\CAT$ spaces ~\cite{AKP-CAT}  this implies that  $\bar B_R(\cvertex)$ is $\CAT(\uk)$ globally. In particular, any curve in $\bar B_R(\cvertex)$ is homotopic rel endpoints to the unique shortest geodesic connecting endpoints by a curve shortening homotopy.

Therefore
{ any two curves in $\bar B_R(\cvertex)$ of length $\le l<\Rk{\uk}$ with the same end points are homotopic rel endpoints through curves of length $\le l$.}

\begin{definition}
A homotopy $\gamma_t$ of loops at $p$ is called \emph{short} if all the loops $\gamma_t$ have length $< \Rk{\uk}$. Short homotopy is obviously an equivalence relation and equivalence classes are called \emph{short homotopy classes}.
\end{definition}
 We will call a loop at $p$ $l$-short if it has length $\le l$. We'll also call $ \Rk{\uk}$-short loops just short loops.
 Every short loop has a unique lift starting at $\cvertex\in \hat M$. More generally every short homotopy $\gamma_t$  of loops at $p$ uniquely lifts to a short homotopy $\tilde \gamma_t$ of paths starting at $\cvertex$. Discreteness of the fiber $\pi^{-1}(p)$ implies that the end points of $\tilde \gamma_t$ are the same. This gives a map $\phi$ from the set of  short homotopy classes to the fiber over $p$ given by $\phi([\gamma])=\tilde\gamma(1)$. This map is easily seen to be a bijection. Also, each short homotopy class $[\gamma]$ of short loops at $p$ contains a unique geodesic loop equal to the projection of the unique shortest geodesic from $\cvertex$ to $\phi([\gamma])$.
 
Given a short class $\alpha=[\gamma]$ we define its displacement $|\alpha|$ as $d(\cvertex, \phi([\gamma])|$. 
Given two short homotopy classes  $\alpha=[\gamma_1]$, $\beta=[\gamma_2]$ let $\gamma_1\cdot\gamma_2$ be the concatenation of $\gamma_1$ followed by $\gamma_2$.
It is obvious that
if  $|\alpha|+|\beta|<\Rk{\uk}$ then $\gamma_1\cdot\gamma_2$ is short. 
\begin{definition}

If $|\alpha|+|\beta|<\Rk{\uk}$   we define the \emph{Gromov product} $\beta\star \alpha$ as the short class $[\gamma_1\cdot\gamma_2]$.
\end{definition}

We can think of the Gromov product as an operation on short homotopy classes or on closed geodesic loops starting at $p$ or on elements of $\pi^{-1}(p)\cap B_{\Rk{\uk}}(\cvertex)$.

The Gromov product is associative i.e $(\alpha_1\star\alpha_2)\star\alpha_3=\alpha_1\star(\alpha_2\star\alpha_3)$  if $|\alpha_1|+|\alpha_1|+|\alpha_1|<\Rk{\uk}$ ~\cite[Proposition 2.2.5]{BuKa}.
It is also obvious that the constant short homotopy class $e$ satisfies $e\star\alpha=\alpha\star e=\alpha$ for any short class $\alpha$. Further if $\alpha=[\gamma]$ then the class of $\bar \gamma(t)=\gamma(1-t)$ is the inverse of $\alpha$.

This means that the set $\Gamma$ of all  short homotopy classes of loops at $p$ forms a pseudo-group  with respect to the Gromov product in the sense of the definition above. 

The pseudo-group $\Gamma$ pseudo-acts on $\tilde M$ \emph{on the the right} by the standard construction in covering space theory. This action defines a left action in the usual way by $\alpha(x):=x\cdot \alpha^{-1}$.

This left action  is the action we will work with from now on.
The action is only a pseudo-action because $\alpha\cdot x$ might not be defined for some pairs $\alpha\in\Gamma, x\in\tilde M$. However, when all the terms are defined the usual axioms of an action hold, that is

\[
e\cdot x=x,\quad  (\alpha\star\beta)(x)=  \alpha(\beta(x))
\]

It is easy to see that $\alpha(x)$ is always defined if $|\alpha|+d(x,\cvertex)<\Rk{\uk}$.
Further the pseudo-action is free, properly discontinuous, isometric and commutes with $\pi$. It is transitive on intersection of any fiber of $\pi$ with $B_{\Rk{\uk}/3}(\cvertex)$.

For any $0<R<\Rk{\uk}$ let $\Gamma(R)$ be the set of classes in $\Gamma$ with displacement $<R$.

Let us note here that any two loops of length $<R$ are short $R$-homotopic iff they are short homotopic. Therefore the pseudo-group $\Gamma(R)$ enjoys all the same properties with respect to $B_R(\cvertex)$ as the whole pseudo-group $\Gamma$ does with respect to $\hat M=B_{\Rk{\uk}}(\cvertex)$.

There is an obvious canonical homomorphism $\rho\co \Gamma(R)\to \pi_1(M,p)$ which maps the short homotopy class of a short loop to its homotopy class in  $\pi_1(M,p)$. The map $\rho$  induces a group homomorphism
 $\bar \rho\co \bar\Gamma(R)\to \pi_1(M,p)$. 
 In general   $\bar \rho$  need not be either 1-1 or onto. However if $M$ is compact then $\pi_1(M,p)$ is well known to be generated by loops of length $\le 2\diam M$.
 
 This immediately yields
 
 \begin{lemma}\label{lem:pseudo-onto}
Suppose  $\Rk{\uk}/2>R>2\diam M$. Then the canonical homomorphism $\bar \rho\co  \Gamma(R)\to \pi_1(M,p)$  is onto. 
  \end{lemma}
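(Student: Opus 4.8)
The statement to prove is Lemma~\ref{lem:pseudo-onto}: if $\Rk{\uk}/2 > R > 2\diam M$, then the canonical homomorphism $\bar\rho\co \bar\Gamma(R)\to\pi_1(M,p)$ is onto. The plan is to observe that surjectivity of $\bar\rho$ is equivalent to surjectivity of the underlying map $\rho\co\Gamma(R)\to\pi_1(M,p)$ \emph{onto a generating set}, and then exhibit such a generating set using the diameter bound.

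First I would recall the standard fact (used in the excerpt just above the lemma) that for a compact Riemannian manifold $M$, the fundamental group $\pi_1(M,p)$ is generated by the classes of loops based at $p$ of length at most $2\diam M$. The usual argument: given any loop $\sigma$ at $p$, subdivide it into short arcs $\sigma = \sigma_1\cdots\sigma_m$ each of length less than the injectivity radius, connect each subdivision point $q_j$ to $p$ by a minimizing geodesic $c_j$ (of length $\le\diam M$), and write $[\sigma] = \prod_j [c_{j-1}\sigma_j \bar c_j]$ where each factor is a loop at $p$ of length at most $2\diam M + (\text{length of }\sigma_j) \le 2\diam M + \text{injrad}$; after a further subdivision one arranges each generator to have length $\le 2\diam M$ (or, more cleanly, one just notes that homotopy classes of loops of length $\le 2\diam M + \eps$ generate, which suffices since we have strict inequality $R > 2\diam M$). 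This is entirely classical, so I would cite it rather than belabor it.

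Next, since $R > 2\diam M$, every such generating loop has length $< R$, hence represents an element of $\Gamma(R)$ — one needs $R < \Rk{\uk}$ here so that loops of length $< R$ genuinely have well-defined short homotopy classes, and indeed $R < \Rk{\uk}/2 < \Rk{\uk}$. By definition $\rho$ sends the short homotopy class of such a loop to its ordinary homotopy class in $\pi_1(M,p)$. Therefore the image of $\rho$ contains a generating set of $\pi_1(M,p)$. Now $\bar\rho\co\bar\Gamma(R)\to\pi_1(M,p)$ is the homomorphism induced by the universal property $\rho = \bar\rho\circ\phi$, so $\operatorname{im}\bar\rho \supseteq \operatorname{im}(\bar\rho\circ\phi) = \operatorname{im}\rho$, which contains a generating set; since $\operatorname{im}\bar\rho$ is a subgroup, it is all of $\pi_1(M,p)$.

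The only genuinely non-routine point — and the one I would state carefully — is the role of the hypothesis $R < \Rk{\uk}/2$ (rather than merely $R < \Rk{\uk}$): it guarantees that the Gromov product $\beta\star\alpha$ is defined for all $\alpha,\beta\in\Gamma(R)$, so that $\Gamma(R)$ is a genuine pseudo-group and $\bar\Gamma(R)$, $\phi$, and the universal property are all available as stated in the excerpt. With that in hand the proof is just the concatenation of the classical generation fact with the universal property, so I do not expect a real obstacle; the main thing to get right is matching the length bounds to the definition of $\Gamma(R)$ and keeping track of strict versus non-strict inequalities.
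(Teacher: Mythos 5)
Your proof is correct and takes essentially the same approach as the paper, which simply cites the classical fact that $\pi_1(M,p)$ is generated by loops of length $\le 2\diam M$ and declares the lemma to follow immediately; you just fill in the routine details (strictness of $R>2\diam M$, the universal property of $\phi\co\Gamma(R)\to\bar\Gamma(R)$). One small inaccuracy in your side remark: the role of $R<\Rk{\uk}/2$ is not to ensure that $\beta\star\alpha$ is \emph{defined for all} $\alpha,\beta\in\Gamma(R)$ (the product may land outside $\Gamma(R)$, but pseudo-groups allow the operation to be partial), rather it ensures the relevant associativity and inverse closure within $\Gamma(R)$ so that $\Gamma(R)$ is a bona fide pseudo-group and $\bar\Gamma(R)$ makes sense; this doesn't affect the validity of your argument.
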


Furthermore, the following slightly  stronger assumptions  guarantee that $\bar\rho$ is  also 1-1 and hence an isomorphism.

 \begin{lemma}\label{lem:pseudo-iso}
 Suppose  $\Rk{\uk}/2>R>5\diam M$. Then the canonical homomorphism $\bar \rho\co  \Gamma(R)\to \pi_1(M,p)$  is   an isomorphism.
  \end{lemma}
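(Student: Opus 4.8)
The plan is to bootstrap from Lemma~\ref{lem:pseudo-onto}, which already gives surjectivity of $\bar\rho\co\Gamma(R)\to\pi_1(M,p)$ whenever $\Rk{\uk}/2>R>2\diam M$. Since $5\diam M>2\diam M$, surjectivity is automatic under the present hypothesis, so the only thing to prove is injectivity of $\bar\rho$. The essential point is to produce, for a fixed radius $R$ with $5\diam M<R<\Rk{\uk}/2$, a partial \emph{inverse} homomorphism from $\pi_1(M,p)$ back to $\Gamma(R)$ (or at least to $\bar\Gamma(R)$), built out of the pseudo-cover geometry. Concretely, every $\gamma\in\pi_1(M,p)$ can be represented by a loop at $p$ of length $\le 2\diam M$; subdividing it into a bounded number of pieces each of length $<R-2\diam M$ (one or two pieces suffice here since $2\diam M<R-2\diam M$), each consecutive pair of subdivision points is joined by a minimizing geodesic of length $\le\diam M$, yielding short geodesic loops $\alpha_1,\ldots,\alpha_k$ at $p$ whose Gromov products are defined (the relevant displacement sums stay below $\Rk{\uk}$ because $R<\Rk{\uk}/2$) and whose concatenation is homotopic to $\gamma$. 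Mapping $\gamma\mapsto [\alpha_1\star\cdots\star\alpha_k]\in\bar\Gamma(R)$ is the candidate inverse; one must check it is well defined (independent of the representative loop and of the subdivision) and a homomorphism.

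The key steps, in order, are as follows. First, fix $R$ with $5\diam M<R<\Rk{\uk}/2$ and set up the pseudo-cover $\hat M=B_{\Rk{\uk}}(\cvertex)$ together with the pseudo-group $\Gamma(R)$ and its generated group $\bar\Gamma(R)$, as in the preceding discussion; note $\Gamma(R)$ acts on the ball $B_{\Rk{\uk}}(\cvertex)$ transitively on fibers of $\pi$ intersected with $B_{\Rk{\uk}/3}(\cvertex)$, and freely/properly discontinuously where defined. Second, define the map $\sigma\co\pi_1(M,p)\to\bar\Gamma(R)$ by the subdivision recipe above, and verify well-definedness: two choices of broken-geodesic representatives of the same $\gamma$ differ by a homotopy that can itself be subdivided into a grid of small squares, each of diameter $<R-2\diam M$, so that around each square the corresponding product of short classes collapses to $e$ in $\bar\Gamma(R)$ by the defining relations of the generated group together with the short-homotopy uniqueness inside balls of radius $<\Rk{\uk}$ established earlier (any two curves in $\bar B_{R'}(\cvertex)$ of length $\le l<\Rk{\uk}$ with the same endpoints are short-homotopic through curves of length $\le l$). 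Third, check $\sigma$ is a homomorphism: concatenating two loops and subdividing the result is, up to reordering subdivision points, the product of the two subdivided expressions, and the Gromov product relations absorb the discrepancy. Fourth, verify $\bar\rho\circ\sigma=\mathrm{id}_{\pi_1(M,p)}$ directly from the definitions ($\bar\rho$ sends each short class to its genuine homotopy class, and concatenation of the geodesic loops recovers a loop homotopic to $\gamma$). Fifth, verify $\sigma\circ\bar\rho=\mathrm{id}_{\bar\Gamma(R)}$: since $\bar\Gamma(R)$ is generated by the images $\phi(\alpha)$ of short classes $\alpha\in\Gamma(R)$, it suffices to check $\sigma(\rho(\alpha))=\phi(\alpha)$ for $|\alpha|<R$, which holds because a length-$|\alpha|<R$ geodesic loop needs no further subdivision and the short-homotopy uniqueness inside $B_{\Rk{\uk}}(\cvertex)$ pins down its class. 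Together these give that $\bar\rho$ is a two-sided inverse to $\sigma$, hence an isomorphism.

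The main obstacle is the well-definedness of $\sigma$ — i.e.\ showing that subdividing two homotopic loops yields the same element of $\bar\Gamma(R)$. This is exactly where the numerical gap between $2\diam M$ and $5\diam M$ is spent: one needs enough room so that throughout a subdividing homotopy every intermediate broken-geodesic loop, and the geodesics filling the elementary homotopy squares, stay short (length $<\Rk{\uk}$) and moreover all Gromov products appearing in the square-by-square cancellation are defined, which requires sums of up to several displacements, each $\le\diam M$ or $\le 2\diam M$, to remain below $\Rk{\uk}$; since $R<\Rk{\uk}/2$ and each displacement is controlled by $\diam M<R/5$, the constant $5$ is precisely what makes these estimates close. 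The rest is bookkeeping with the pseudo-group axioms (associativity and the relation $a_{g_1}a_{g_2}=a_{g_3}$ when $g_1\star g_2=g_3$) and the curve-shortening homotopy inside the $\CAT(\uk)$ balls $\bar B_{R'}(\cvertex)$. A cleaner packaging, which I would adopt if it shortens the argument, is to invoke the corresponding statement from \cite[Section 2]{BuKa} for the pseudo-group $\Gamma$ and then transfer it to $\Gamma(R)$ using the already-noted fact that loops of length $<R$ are short $R$-homotopic iff short homotopic, so that $\Gamma(R)$ carries the same structure relative to $B_R(\cvertex)$ as $\Gamma$ does relative to $\hat M$.
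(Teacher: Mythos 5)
Your proposal is correct and takes essentially the same approach as the paper's proof: the core of both is the grid subdivision of a nullhomotopy into small squares, with a short class assigned to each side and the defining relations of $\bar\Gamma(R)$ absorbing each square via $\beta_{ij}\star\gamma_{i+1,j}=\gamma_{ij}\star\beta_{i,j+1}$. The only difference is cosmetic packaging — you build an explicit left inverse $\sigma\co\pi_1(M,p)\to\bar\Gamma(R)$ and verify it is well defined, while the paper directly characterizes $\ker(W(R)\to\pi_1(M,p))$ as $N_R$; these are logically equivalent and are proved by the identical grid argument.
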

\begin{proof}
A detailed proof of this lemma is given in \cite[Proposition 2.2.7]{BuKa}. We reproduce the proof here as we will need its details in the proof of the main theorem.

Let $W(R)=\langle a_g|g\in \Gamma(R)\rangle$ be the free group on elements of $\Gamma(R)$. We have a canonical homomorphism $\psi\co W(R)\to \pi_1(M,p)$ which is an epimorphism by the previous lemma. We need to show that its  kernel is exactly the normal subgroup $N_R$  generated by words of the form $a_{g_1}\cdot a_{g_2}\cdot a_{g_3}^{-1}$ whenever $g_1,g_2,g_3\in \Gamma(R)$ satisfy $g_1\star g_2=g_3$. The inclusion $N_R\subset \ker \psi$ is obvious and we just need to show that if $w\in \ker \psi$ then $w\in N_R$.

Let $c_1\co [0,1]\to M$ be a loop at $p$ representing $w$. It is nullhomotopic  by assumption. Let $c(s,t),0\le t,s\le 1$ be a piecewise smooth homotopy where $c_1(s)=c(s,1)$ and $c_0(s)=c(s,0)$ is the constant loop at $p$.

The curve $c_1$ comes with a subdivision $0=\sigma_0<\sigma_1<\ldots<\sigma_m=1$ where each $c_1|_{[\sigma_{i-1},\sigma_i]}$ corresponds to a letter $a_{\alpha_i}$ in the word $w$ for some $\alpha_i\in\Gamma(R)$.

Let us introduce a new subdivision point $s\in [\sigma_{i-1},\sigma_i]$.  We can connect $p$ to $c_1(s)$ by a shortest geodesic $g_s$. Then the closed loop $\alpha_i$ is homotopic to the product of the loop $\alpha_i'=c|_{[\sigma_{i-1},s]}\cdot \bar g_s$ followed by the loop $\alpha_i''=g_s\cdot c_1|_{[s,\sigma_i]}$. Since $g_s$ is a shortest geodesic both of these loops are $R$-short and $\alpha_i=\alpha_i'\star \alpha_i''$. Since this relation is in $N_R$ if we replace $a_{\alpha_i}$ by $a_{\alpha_i'}\cdot a_{\alpha_i''}$ the resulting word $w'$ will be equal to $w$ mod $N_R$.

By uniform continuity of the homotopy $c_t$ we can find subdivisions $0=s_0<s_1<\ldots<s_L=1$  which is  a refinement of $\{\sigma_i\}$ and $0=t_0<\ldots<t_e=1$ such that the curves given by restricting $c(s,t)$ to the sides of any rectangle $s_{i-1}\le s\le s_i, t_{j-1}\le t\le t_j$ are shorter than $\delta\ll\diam M$.

Let $g_{i,j}$ be a shortest geodesic from $p$ to $c(s_i,t_j)$. Obviously, it has length $\le\diam M$.

 Let $c_{ij}$ be the curve $s\mapsto c(s,t_{j})$ for $s\in [s_i,s_{i+1}]$ and $c^{ij}$ be the curve $t\mapsto c(s_i,t)$ for $t\in [t_j,t_{j+1}]$. Then each $c_{t_j}$ is the  product of curves $g_{ij}\cdot c_{ij}\cdot g_{i+1,j}^{-1}$ each of which has length $\le 2\diam M+\delta$. We will denote the short homotopy class of this curve by $\beta_{ij}$.
 
 Also let $\gamma_{ij}$ represent the short homotopy class of the similarly defined curve $g_{ij}\cdot c^{ij}\cdot g_{i,j+1}^{-1}$ which also has length $\le 2\diam M+\delta$ for the same reason. Then we obviously have that
 $\beta_{ij}\star \gamma_{i+1,j}=\gamma_{i,j}\star \beta_{i,j+1}$. These are short relations by the assumptions on the diameter of $M$ and therefore 
 \[
 a_{\beta_{ij}}\cdot a_{ \gamma_{i+1,j}}=a_{\gamma_{i,j}}\cdot a_{ \beta_{i,j+1}}\quad \text{ mod }N_R
 \]

Also, the classes $\beta_i,0$ and $\gamma_{0,j}, \gamma_{L,j}$ are all trivial because $c(s,0)=c(0,t)=c(1,t)=p$ for all $t,s$.

By induction on $i$ this easily gives that 
\[
a_{\gamma_{0,j}}\cdot a_{\beta_{0,j+1}}\cdot a_{\beta_{1,j+1}}\cdot \ldots\cdot a_{\beta_{L,j+1}}=a_{\beta_{0,j}}\cdot a_{\beta_{1,j}}\cdot \ldots\cdot a_{\beta_{L,j}}\cdot a_{\gamma_{L,j}}\quad \text{mod }N_R
\]

Since this is true for all $j=0,\ldots e-1$ we get that $w$ is equivalent to $a_{\beta_{0,0}}\cdot a_{\beta_{1,0}}\cdot\ldots a_{\beta_{L,0}}=1$ mod $N_R$.

\end{proof}

\begin{remark}\label{rem: non-injective}
While Lemma \ref{lem:pseudo-iso} implies that the map $\bar \Gamma(R)\to \pi_1(M)$ is 1-1 it 
  \emph{does not} automatically imply that the map $\Gamma(R)\to \pi_1(M)$ is 1-1 except in the case $\uk\le 0$. A counterexample immediately follows from a result of Buser and Gromoll ~\cite{Buser-Gromoll} originally claimed by Gromov~\cite{Gr-almost-flat} that for every $\eps>0$ there exists a Riemannian metric on $\SS^3$ with $\sec\le \eps$ and $\diam\le \eps$.
This non-injectivity creates considerable difficulty in the proof of the main theorem as well as in Gromov's proof of the original Almost Flat Manifolds Theorem.

\end{remark}

\section{Geometry of large mixed curvature almost flat manifolds}\label{sec:  RCD+CAT}
In this section we study the geometry of manifolds $(M^n,g,e^{-f}\cdot \mathcal H_n)$ with  $\sec\le \eps^2, \Ric_{f,N}\ge -(N-1)\eps^2$ with small $\eps$ and large injectivity radius and show that they are quantitatively close to $\R^n$. In particular we show that their large scale geometry is close to their infinitesimal geometry. This is a key point that serves as a substitute for the lower sectional curvature bound in the proof of the Fibration Theorem.

We will need the following definitions

 \begin{definition}
Given two complete Riemannian manifolds $M^n, N^n$, points $p\in M, q\in N$ and $R>10\eps>0$ we will say that $B_R(p)$ and $B_R(q)$ are $\eps$-Lipschitz-Hausdorff close if there exists an $\eps$-GH -approximation
$f\co B_R(p)\to B_{R+2\eps}(q)$ which is $(1+ \eps)$-Bilipschitz onto its image that is for any $x,y\in B_R(p)$ it holds that

\[
\frac{1}{1+\eps}d(x,y)\le d(f(x),f(y))\le (1+\eps)d(x,y)\quad 
\]
\end{definition}
\begin{remark}
Invariance of domain theorem easily implies that if $f\co B_R(p)\to B_{R+2\eps}(q)$  is a Lipschitz-Hausdorff approximation as above then $f(B_R(p))\supset B_{R-2\eps}(q)$.
\end{remark}

  \begin{definition}\label{def:almost-linear}
  We will say that a  smooth  map $f\co M^n\to B^m$ is $\eps$-linear on scale $\delta$  at $p\in M$ if 
     for any 
     $v\in T_pM, |v|\le 1$  it holds
    
    \[
         d(f(\exp_p(t v)),\exp_{f(p)}(td_pf(v))\le \eps\cdot\delta \quad \text{ for all } t\in [0,\delta]
    \]
    
    We'll say that $f$ is $\eps$-linear on scale $\delta$ on $U\subset M$ if it's $\eps$-linear on scale $\delta$ at any $p\in U$.
    \end{definition}
Note that we do not require any geodesics in the above definition to be minimizing. In particular a Riemannian covering between two complete manifolds is $\eps$-linear on scale $\delta$  for any positive $\eps$, $\delta$.

Also note that the almost linearity for a map $f\co M\to \R^m$ at $p\in M$ means that for any geodesic $\gamma$ with $\gamma(0)=p,\gamma'(0)=v\in T_xM, |v|=1$ it holds
    \[
       f(\gamma(t))=f(p)+td f_p(v)\pm\eps\cdot\delta\quad \text{ for all } t\in [0,\delta]
    \]

        \begin{remark}\label{rem: loc-linear}
    It is obvious from the triangle inequality that if $f\co M\to B_1$ is   2-Lipschitz and   $\eps$-linear on scale $\delta$ and $\Phi\co B_1\to B_2$ is $\vk(\delta)$-close to an isometry in $C^3$ then $\Phi\circ f$ is $(1+\vk(\delta))\cdot\eps$ linear on scale $\delta$. Therefore if $\delta$ is small, at the expense of slightly increasing $\eps$, for the purposes of verifying almost linearity we can work with $\Phi\circ f$ instead of $f$.

    In particular, observe that $(\frac{1}{\delta} B, q)$ converges  in $C^3$ to $(\R^m,0)$ as $\delta\to 0$  uniformly on compact subsets. Thus the above discussion applies where as a map $\Phi$ we can take a strainer map.  More precisely, 
for $R>0$ let $\Phi_R\co \R^m\to\R^m$ be the approximate Buseman map $\Phi_R(x)=(R-|x-Re_1|,\ldots, R-|x-Re_m|)$. It's easy to see that $\Phi_R\to \mathrm{Id}$ as $R\to \infty$ in $C^3$ uniformly on compact sets.
   Combining this with the previous remark means that again at the expense of slightly increasing $\eps$, to verify $\eps$-almost linearity on scale $\delta$ we can work with $\Phi\circ f$ where $\Phi$ is a strainer map near $q$ with respect to the strainer $(a_1=\exp_q(re_1),\ldots a_m=\exp_q(re_m))$ where $e_1,\ldots e_m$ is an orthonormal basis in $T_qB$ and $r>0$ is a small fixed constant $\ll \injrad(q)$ and $\delta$ is sufficiently small.

\end{remark}

We will need the following two results from the theory of $\RCD$ spaces.
\begin{theorem}[Splitting Theorem]\label{splitting-thm}\cite{giglisplitting, gigli-split-overview, Kap-Ket-18}
Let $1\le N<\infty$ and let $(X,d,m)$ be $\RCD(0,N)$ and $\CAT(0)$. Suppose $X$ contains a line. Then $(X, d)$ splits isometrically as $\R\times Y$.
\end{theorem}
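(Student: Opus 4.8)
The plan is to follow the Cheeger--Gromoll argument in the form adapted to the nonsmooth setting by Gigli; the $\CAT(0)$ hypothesis is not essential for the splitting itself and is used only to transfer the upper curvature bound to the factor $Y$. We work entirely with the $\RCD(0,N)$ structure, which in particular makes $X$ geodesic, hence connected, and provides a well-behaved Laplacian, heat flow, and Bochner inequality.

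Let $\gamma\colon\R\to X$ be a line and define the two Busemann functions
\[
b^{\pm}(x)=\lim_{t\to\infty}\bigl(d(x,\gamma(\pm t))-t\bigr).
\]
First I would record the soft facts: each limit exists by monotonicity in $t$ combined with the triangle inequality, $b^{\pm}$ are $1$-Lipschitz, and $b^{+}+b^{-}\ge 0$ everywhere with equality along $\gamma$. The analytic core is to show that $b:=b^{+}$ is harmonic with $|\nabla b|\equiv 1$ and $\Hess b\equiv 0$. For harmonicity one uses the Laplacian comparison available on $\RCD(0,N)$ spaces: the functions $x\mapsto d(x,\gamma(\pm t))-t$ have Laplacian (in the distributional, measure sense) bounded above by $(N-1)/d(\cdot,\gamma(\pm t))\to 0$ as $t\to\infty$, and passing to the monotone limit gives $\Delta b^{\pm}\le 0$. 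Hence $b^{+}+b^{-}$ is superharmonic, nonnegative, and vanishes at a point; the strong maximum principle for $\RCD$ spaces forces $b^{+}+b^{-}\equiv 0$, so $b^{+}=-b^{-}$ is harmonic.

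Next, from the improved (Hessian) Bochner inequality valid on $\RCD(0,N)$ spaces,
\[
\tfrac12\,\Delta|\nabla b|^{2}\ \ge\ |\Hess b|^{2}+\langle\nabla b,\nabla\Delta b\rangle,
\]
and $\Delta b=0$, we get that $|\nabla b|^{2}$ is subharmonic; since it is bounded above by $1$ and equals $1$ along $\gamma$, the strong maximum principle yields $|\nabla b|\equiv 1$, and then the Bochner inequality holds with equality, so $\Hess b\equiv 0$ ($m$-a.e., in the sense of the second-order calculus on $\RCD$ spaces). Finally I would invoke Gigli's splitting criterion: on an $\RCD(0,N)$ space the existence of a function $b$ with $|\nabla b|\equiv 1$ and $\Hess b\equiv 0$ implies that the gradient flow of $\nabla b$ is a one-parameter flow of isometries, that the map $(b,\mathrm{flow})$ realizes an isometry $X\cong\R\times Y$ with $Y=b^{-1}(0)$ equipped with its induced length metric and $Y$ an $\RCD(0,N-1)$ space; since a parallel family of such slices of a $\CAT(0)$ space is again $\CAT(0)$, the factor $Y$ inherits the upper curvature bound.

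The main obstacle is entirely in the two self-improvement steps in the nonsmooth setting: the Laplacian comparison for Busemann functions (requiring the $\RCD$ Laplacian comparison and stability under monotone limits) and, more delicately, passing from the Bochner inequality to $\Hess b\equiv 0$, which needs the second-order differential calculus on $\RCD$ spaces and the strong maximum principle for subharmonic functions. These are precisely the technical contributions of Gigli's work, which we quote; for the $\CAT(0)$ subclass one could alternatively exploit convexity of $x\mapsto d(x,\gamma(t))$ to see directly that $b^{\pm}$ are convex, and combine this with superharmonicity to force affineness along geodesics, but the cleanest route remains to cite the general $\RCD(0,N)$ splitting theorem.
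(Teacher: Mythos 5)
Your sketch is correct, and it faithfully reproduces Gigli's non-smooth Cheeger--Gromoll argument: Busemann functions, Laplacian comparison to get $\Delta b^{\pm}\le 0$, the strong maximum principle applied to $b^{+}+b^{-}$ to deduce harmonicity, and the Bochner self-improvement to get $|\nabla b|\equiv1$ and $\Hess b\equiv 0$, then the flow-of-isometries criterion. Note, however, that the paper does not prove this theorem at all: it cites it, pointing to Gigli \cite{giglisplitting, gigli-split-overview} for the general $\RCD(0,N)$ case (which you are sketching) and to \cite{Kap-Ket-18} for a ``simple proof'' in the case actually needed. Two remarks are worth making about how your route compares to the latter. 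First, the simpler proof cited by the paper assumes only $\CD(0,N)$ (not $\RCD(0,N)$) together with $\CAT(0)$; since the Bochner inequality and the second-order calculus you invoke in the last self-improvement step genuinely require the infinitesimally Hilbertian structure, that alternative argument is not merely cosmetically simpler but applies under strictly weaker analytic hypotheses. It replaces the Bochner/$\Hess b\equiv 0$ step with the observation that in a $\CAT(0)$ space the Busemann functions are already convex, so superharmonicity plus convexity immediately forces $b^{\pm}$ to be affine along geodesics, after which one uses the flat-strip machinery of $\CAT(0)$ geometry rather than the non-smooth gradient-flow-of-isometries criterion. You do gesture at this alternative at the very end, but frame it as a variant rather than as the route the paper actually prefers. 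Second, your remark that ``the $\CAT(0)$ hypothesis \dots\ is used only to transfer the upper curvature bound to the factor $Y$'' is slightly off target for the statement as written: the theorem claims only that $(X,d)$ splits metrically as $\R\times Y$, with no assertion about curvature of $Y$, so in Gigli's approach the $\CAT(0)$ assumption plays no role at all --- it is simply there because it is available and later used elsewhere in the paper, and because it enables the elementary \cite{Kap-Ket-18} proof.
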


The Splitting Theorem holds in general without the $\CAT(0)$ assumption. This was proved by Gigli in \cite{giglisplitting}( see also \cite{gigli-split-overview}) which generalizes the Splitting Theorem for Ricci limits proved by Cheeger and Colding \cite{CC-splitting}. Gigli also showed that the measure naturally splits as well. However, we only need the theorem in the special case above. In this case a simple
proof was given in  ~\cite{Kap-Ket-18} under the assumption that $X$ is $\CD(0,N)$ (rather than $\RCD(0,N)$) and $\CAT(0)$.

We will also need the following result on lower semicontinuity of geometric dimension for $\RCD$ spaces. For simplicity we only state it for smooth spaces as this is the only version we will need.

\begin{theorem}[Lower Semicontinuity of geometric dimension]\cite{KitaPOTA}
Let $1\le N<\infty$ and let $(M^n_i,g,e^{-f_i}\cdot \mathcal H_n, p_i)$ be a sequence of smooth weighted complete Riemannian manifolds satisfying $\Ric_{f_i,N}\ge \lk$ which measured pGH converges to $(N^m,g,\mu, p)$ where $(N^m,g)$ is a smooth Riemannian manifold. Then $m\le n$.
\end{theorem}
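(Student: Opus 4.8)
The plan is to recast the statement as a lower semicontinuity property of the geometric dimension for $\RCD$ spaces. By the smooth characterization recalled in the introduction, each $(M_i^n,g,e^{-f_i}\cdot\mathcal H_n)$ is an $\RCD(\lk,N)$ space, and by the stability of the curvature--dimension condition under pmGH convergence the limit $(N^m,g,\mu,p)$ is $\RCD(\lk,N)$ as well. Since $N^m$ is a smooth Riemannian manifold, $\mu$ is a Radon measure absolutely continuous with respect to $\vol_g$ with locally positive density $h$, so at every Lebesgue point $q$ of $h$ the rescaled pointed spaces $(N^m,r^{-1}d,\mu(B_r(q))^{-1}\mu,q)$ converge as $r\to0$ to $(\R^m,d_{\mathrm{eucl}},c_m\mathcal H_m,0)$; hence $\dimg(N^m)=m$. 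Similarly $\dimg(M_i^n)=n$, the tangent cone of a smooth weighted $n$-manifold at any point being $\R^n$. Thus it suffices to show $\dimg(N^m)\le\liminf_i\dimg(M_i^n)$, i.e. that the geometric dimension does not jump up in the limit. (When $n=N$ this is immediate: the limit is then $\RCD(\lk,n)$, so $\dimh(N^m)\le n$, while $\dimh(N^m)=m$.)

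Suppose, for contradiction, that $m\ge n+1$. Since pmGH convergence is sequential, a diagonal argument combines $M_i\to N^m$ with the blow-up above to produce points $q_i\in M_i$, scales $r_i\to0$ and normalizing constants $\lambda_i>0$ such that the rescaled spaces $(M_i,\,r_i^{-1}d,\,\lambda_i\mu_i,\,q_i)$ converge in the pmGH sense to $(\R^m,d_{\mathrm{eucl}},c_m\mathcal H_m,0)$, where $\mu_i=e^{-f_i}\cdot\mathcal H_n$. Under the rescaling $g\mapsto r_i^{-2}g$ the Bakry\textendash Emery tensor scales by $r_i^2$ with the \emph{same} weight function, so each rescaled manifold satisfies $\Ric_{f_i,N}\ge\lk r_i^2$ and is therefore $\RCD(-1,N)$ for $i$ large, while remaining a smooth $n$-manifold of geometric dimension $n<m$. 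We have thus realized $(\R^m,c_m\mathcal H_m)$ --- a space that is non-collapsed of dimension $m$ --- as a pmGH limit of $\RCD(-1,N)$ spaces of geometric dimension $n$.

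The contradiction comes from the rigidity of non-collapsed limits. Because the limit measure is a constant multiple of the top-dimensional Hausdorff measure of the $m$-dimensional limit, the volume-convergence theorem for $\RCD$ spaces (De Philippis--Gigli) forces $\lambda_i\mu_i$ to converge to $c_m\mathcal H_m$ in density, so that for $i$ large a set of uniformly positive $\lambda_i\mu_i$-measure in $B_1(q_i)$ consists of points $x$ with $\lambda_i\mu_i(B_s(x))\ge c\,s^m$ for all small $s$. On the other hand, $\mu_i$ is absolutely continuous with respect to the $n$-dimensional Hausdorff measure of the smooth $n$-manifold $M_i$, so at $\mu_i$-a.e. $x$ one has $\limsup_{s\to0}\mu_i(B_s(x))/s^n<\infty$, i.e. $\mu_i(B_s(x))\le C_x\,s^n$ for small $s$. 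Since $m>n$, at the common small scales these estimates are incompatible on a set of positive measure, which is the desired contradiction. Making the density assertions precise --- in particular extracting a \emph{uniform} lower density bound along the sequence --- is exactly the work carried out in \cite{KitaPOTA}, whose statement we have quoted, and which rests on the rectifiability of $\RCD(K,N)$ spaces (Mondino--Naber) and the constancy of their essential dimension (Bru\`e--Semola).

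The main obstacle is precisely this last step. Nothing controls the genuine Ricci curvature of $g_i$ or the size of the weights $f_i$, so Bishop\textendash Gromov applied on $M_i$ only bounds $\mu_i(B_s(q_i))$ in terms of $s^N$, not $s^n$; one genuinely needs that the \emph{essential} dimension, rather than the synthetic upper bound $N$, is lower semicontinuous, and this requires the fine structure theory of $\RCD$ spaces rather than any soft comparison-geometry argument. Accordingly, the self-contained portion of the argument is the reduction in the first paragraph, and for the remainder we invoke \cite{KitaPOTA}.
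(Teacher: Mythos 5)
The paper provides no proof of this theorem: it is imported directly from \cite{KitaPOTA}, which is precisely where your sketch ultimately defers the crucial uniform lower-density estimate, so your route coincides with the paper's. One small caution: the De~Philippis--Gigli volume-convergence theorem you invoke is formulated for non-collapsed $\RCD(K,N)$ spaces carrying $\mathcal H^N$ as reference measure, whereas here the limit carries $\mathcal H^m$ with $m<N$, so that theorem is not the right tool and the required lower density bound genuinely needs the Mondino--Naber rectifiability and Bru\`e--Semola essential-dimension constancy exactly as used in \cite{KitaPOTA}.
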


\begin{lemma}\label{lem-bilip-almost-split}
For any $N>1$ there are  universal functions  $\tilde R(\eps)=\Vk(\eps|N),  \tilde r(\eps)= \Vk(\eps| N), \nu(\eps)=\vk(\eps|N)$ with $\frac{1}{\sqrt\eps}>\tilde R(\eps)\ge \tilde r(\eps)$ such that the following holds.

Suppose $(M^n,g,e^{-f}\cdot \mathcal H_n, p)$ is a (not necessarily complete) weighted Riemannian manifold satisfying $\sec\le \eps^2, \Ric_{f,N}\ge -(N-1)\eps^2$ and the injectivity radius at $p$ is $\ge \frac  1\eps$ and the ball $\bar B_{1/\eps}(p)$ is compact.

Then

\begin{enumerate}
\item \label{large-ball-GH} The ball $B_{\tilde R}(p)$ is $\nu(\eps)$-GH close to  $B_{\tilde R}(0)\subset \R^m$
\item \label{comp-close-angle}
 For any $q\in B_{\tilde r}(p)$ and any $x,y\in B_{\tilde R}(p)$ it holds that 
\begin{equation*}\label{comp-close-angle}
\text{the comparison angle  $\tilde\sphericalangle xqy$ is $\nu(\eps)$-close to the actual angle $\angle xqy$}
\end{equation*}
\item  \label{bilip+GH} for any $1\le r<\tilde r$ the ball $B_r(p)$ is  $\nu(\eps)$-Lipschitz-Hausdorff close to  $B_r(0)\subset \R^n$.
\end{enumerate}

\end{lemma}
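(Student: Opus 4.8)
\textbf{Proof plan for Lemma~\ref{lem-bilip-almost-split}.}
The plan is to argue by contradiction and compactness. Suppose the statement fails for some fixed $N>1$; then there is a sequence of pointed weighted manifolds $(M_i^n,g_i,e^{-f_i}\cdot\mathcal H_n,p_i)$ with $\sec\le\eps_i^2$, $\Ric_{f_i,N}\ge -(N-1)\eps_i^2$, $\injrad(p_i)\ge 1/\eps_i$, $\bar B_{1/\eps_i}(p_i)$ compact, $\eps_i\to 0$, yet with one of the three conclusions violated at some scales $\tilde R_i,\tilde r_i$ and some $\nu>0$ --- so I must also quantify over all admissible choices of the functions $\tilde R,\tilde r,\nu$; the cleanest way is to prove the three conclusions hold with $\nu(\eps)\to 0$ for \emph{some} pair of functions $\tilde R(\eps)\le \tilde r(\eps)\to\infty$ slower than $1/\sqrt\eps$, which is exactly what a contradiction argument along a diagonal sequence gives. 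After rescaling so that the relevant curvature and injectivity-radius bounds persist on balls of radius $\to\infty$, pass to a pointed measured Gromov--Hausdorff limit $(X,d,m,p_\infty)$ of (a subsequence of) the $B_{1/\eps_i}(p_i)$; precompactness is guaranteed by the uniform lower Bakry--Emery Ricci bound (Gromov precompactness as recalled in the introduction). The limit $X$ is $\RCD(0,N)$ since $\Ric_{f_i,N}\ge -(N-1)\eps_i^2\to 0$, and it is $\CAT(0)$ since $\sec\le\eps_i^2\to 0$ together with $\injrad(p_i)\to\infty$ makes every ball $\bar B_R(p_i)$ globally $\CAT(\eps_i^2)$ (this is precisely the pseudo-cover discussion in Section~\ref{sec:pseudo-covers} applied with no short loops, so the limit of $\CAT(\eps_i^2)$ spaces is $\CAT(0)$), and moreover each $B_R(p_i)$ has no boundary issues on the scales used.

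Next I identify $X$. Since $\injrad(p_i)\ge 1/\eps_i\to\infty$, the ball $\bar B_{R}(p_i)$ is, for fixed $R$ and large $i$, isometric to a ball in a complete simply connected manifold with $\sec\le\eps_i^2$ and no conjugate points, hence by Rauch it contains geodesics extendable to length $R$ in all directions; in the limit $X$ is a geodesically complete $\RCD(0,N)+\CAT(0)$ space. I want to show $X=\R^m$ isometrically with $m\le n$. The idea is: around $p_\infty$ the space is locally a cone over its space of directions (this is the $\CBA$ local structure from \cite{Kap-Ket-18, Kap-Kell-Ket-19}, or more elementarily the $\CAT(0)$ tangent-cone structure), and the $\RCD(0,N)$ condition forces the tangent cone at $p_\infty$ to be a metric cone which is also $\RCD(0,N)$ and $\CAT(0)$; then I run a splitting/dimension-reduction: because $\injrad(p_i)\to\infty$ every direction $v$ at $p_\infty$ is realized by a ray, so $X$ contains, after recentering, lines in $m=\dim X$ independent directions, and iterated application of the Splitting Theorem (Theorem~\ref{splitting-thm}) gives $X\cong \R^m\times Y$ with $Y$ a point (if $Y$ were not a point it would contain a nonconstant geodesic, contradicting that all of $\R^m$-worth of directions are already accounted for and $X$ is itself the tangent cone, i.e.\ a cone with apex yielding $Y$ a cone with apex, forcing $\dim Y=0$). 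Finally, lower semicontinuity of geometric dimension (the cited theorem of Kitabeppu) gives $m=\dim X\le n$.

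With $X=\R^m$ identified, conclusion \eqref{large-ball-GH} is immediate from pmGH convergence: for any fixed $\tilde R$, $B_{\tilde R}(p_i)$ is $\nu_i$-GH close to $B_{\tilde R}(0)\subset\R^m$ with $\nu_i\to 0$, so a diagonal argument produces $\tilde R(\eps)\to\infty$ and $\nu(\eps)\to 0$. Conclusion \eqref{comp-close-angle} follows because in $\R^m$ comparison angles equal actual angles, and both the comparison angle $\tilde\sphericalangle xqy$ and, via the $\CAT(0)$ bound together with lower semicontinuity of angles under the curvature-bounded-above convergence, the actual angle $\angle xqy$ in $M_i$ converge to their Euclidean values as $\eps_i\to 0$ (the $\CAT$ upper bound gives $\angle\le\tilde\sphericalangle$, and a lower bound on the angle comes from the fact that the two geodesics nearly span a flat triangle since the comparison triangle is nearly thin on both sides --- this is where the two-sided curvature control is used); again diagonalize to get $\tilde r(\eps)\to\infty$. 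Conclusion \eqref{bilip+GH} is the quantitative upgrade: fixing $1\le r<\tilde r$, I build the bi-Lipschitz map explicitly via a strainer/exponential-coordinate chart as in Remark~\ref{rem: loc-linear} --- choose $m=n$-many (using $m\le n$ and that on the relevant scale the limit is actually $n$-dimensional once we are inside the injectivity radius; more precisely the chart is built on $M_i$ itself using $n$ coordinate functions that are almost-linear on scale $r$ and whose gradients are almost-orthonormal, the almost-orthonormality coming from part \eqref{comp-close-angle}) and show, using $\sec\le\eps_i^2$ (Rauch, upper bound on how fast geodesics spread) and the angle estimate \eqref{comp-close-angle} (lower bound, i.e.\ no collapsing of directions), that this chart is $(1+\nu_i)$-bi-Lipschitz onto a set containing $B_{r-2\nu_i}(0)$, with $\nu_i\to 0$; invariance of domain gives the image contains the stated ball.

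\textbf{Main obstacle.} The serious point is the identification $X\cong\R^m$ and especially the bi-Lipschitz (not merely GH) statement \eqref{bilip+GH}: GH-closeness alone is cheap from compactness, but producing a genuine bi-Lipschitz homeomorphism requires controlling how geodesics from $p_i$ spread, for which the upper sectional bound gives the lower bound $|d_pf(v)|\ge$ (Euclidean)/$(1+\nu)$ via Rauch comparison, while the \emph{absence} of a lower sectional bound means the reverse inequality $|d f(v)|\le(1+\nu)(\text{Euclidean})$ cannot come from Rauch and must instead be extracted from the $\RCD(0,N)+\CAT(0)$ structure of the limit together with the angle rigidity \eqref{comp-close-angle} --- i.e.\ the whole difficulty of the section is packaged into making \eqref{comp-close-angle} quantitative, and that in turn rests on the two cited $\RCD$ theorems (Splitting and lower semicontinuity of geometric dimension). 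I expect the bulk of the real work to be in the careful contradiction/compactness bookkeeping needed to extract uniform functions $\tilde R,\tilde r,\nu$ and in verifying that the limit space is $\CAT(0)$ with no boundary so that Theorem~\ref{splitting-thm} applies.
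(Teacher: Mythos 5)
Your outline follows the paper's overall strategy (pass to a pmGH limit, use the Splitting Theorem and lower semicontinuity of geometric dimension to identify the limit as a Euclidean space, then build a strainer chart), but it has a genuine gap at the single most important step: proving that the limit has dimension exactly $n$. You correctly deduce $m\le n$ from Kitabeppu's semicontinuity theorem, but the reverse inequality $m\ge n$ is dismissed with the unsupported phrase ``on the relevant scale the limit is actually $n$-dimensional once we are inside the injectivity radius.'' Large injectivity radius alone does not prevent a GH limit from dropping dimension; without a lower sectional bound the exponential map can stretch wildly, and GH limits are perfectly capable of losing dimensions. What actually rules this out is the $\CAT(\eps^2)$ condition (i.e.\ Rauch for $\sec\le\eps^2$), which makes $\exp_p$ almost non-contracting and fixes radial distances exactly, so that the $2n$ points $a_i^{\pm}=\exp_p(\pm e_i)$ satisfy $d(a_i^{\pm},p)=1$, $d(a_i^{+},a_i^{-})=2$, and $d(a_i^{\pm},a_j^{\pm})\ge \sqrt2-\vk(\eps)$ for $i\neq j$. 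Passing to the limit $\R^k$ and using the antipodal pairs, these inequalities force the directions to be almost orthonormal on \emph{both} sides (the antipodality converts the one-sided inner-product bound into a two-sided one), and $n$ almost-orthonormal vectors cannot live in $\R^k$ with $k<n$. This strainer argument is the heart of the lemma and is absent from your proposal; without it, your proof of part~\eqref{bilip+GH} (which asserts closeness to $\R^n$, not $\R^m$) does not go through.

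A secondary, more cosmetic, issue: for part~\eqref{comp-close-angle} you appeal to ``lower semicontinuity of angles under curvature-bounded-above convergence'' and to a vague ``nearly thin on both sides'' heuristic. The paper's mechanism is cleaner and worth noting: once $B_{\tilde R}(p)$ is known to be GH-close to a Euclidean ball of the \emph{same} dimension $n$, the logarithm map $\Psi_r\colon \tfrac1r S_r(p)\to\Sigma_pM\cong\SS^{n-1}$ is $1$-Lipschitz (from $\CAT(\eps^2)$) and onto, between two compact spaces that are both $\vk(\eps)$-close to $\SS^{n-1}$; such a map is automatically a $\vk(\eps)$-GH-approximation, and this is exactly the statement that comparison angles are close to actual angles. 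Note this argument also leans on $m=n$, so the strainer step feeds into part~\eqref{comp-close-angle} as well, not only into part~\eqref{bilip+GH}.
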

\begin{proof}
By the same argument as in the construction of pseudo-covers in Section~\ref{sec:pseudo-covers} the closed ball $\bar B_{1/100\eps}(p)$ is $CAT(\eps^2)$.  Since it has convex boundary and satisfies  $\Ric_{f,N}\ge -(N-1)\eps^2$ it is $\RCD( -(N-1)\eps^2, N)$. 
By the Splitting Theorem this implies that $(\bar B_{1/100\eps}(p), p)\to (\R^k,0)$ in pGH topology as $\eps\to 0$ for some $k\le N$. By semi-continuity of geometric dimension $k\le n$. 

 On the other hand, let $e_1,\ldots, e_n$ be an orthonormal basis in $T_pM$. Consider the points $a_i^\pm=\exp_p(\pm e_i)$. Then the $\CAT(\eps^2)$ condition implies that these points form an $n$-strainer at $p$ of size $1$. In other words $d(a_i^+,a_i^-)=2, d(a_i^\pm, p)=1$ for all $i$ and for all $i\ne j$ it holds that $d(a_i^\pm,a_j^\pm)\ge \sqrt{2}-\vk(\eps)$. Since no $\vk(\eps)$-close to $\{p, a_i^\pm\}$ configuration of points  exists in $\R^k$ for $k<n$ we must have that $k\ge n$.

Therefore $k=n$ and $X=\R^n$. 
\footnote{the above argument is the \emph{only} place in the proofs of the Fibration Theorem and of the Asphericity Theorem where the lower Bakry\textendash Emery bound  is used.}

Pointed convergence to $\R^n$ implies that there exists  $\frac{1}{\sqrt\eps}>\tilde R(\eps)\to\infty$ as $\eps\to 0$ such that  $B_{\tilde R(\eps)}(p)$ is $\vk(\eps|N)$-mGH close to $B_{\tilde R(\eps)}(0)\subset \R^n$.


Now let $1<r<\tilde R$. Then the logarithm map $\Psi_r\co \frac{1}{r}S_r(p)\to \Sigma_pM\cong\SS^{n-1}$ is 1-Lipschitz and onto. Since  $\frac{1}{r}S_r(p)$ is $\vk(\eps)$-close to $\SS^{n-1}$  this in turn implies that $\Psi_r$ is a  $\vk(\eps)$-GH-approximation. This implies that for any $x,y\in B_r(p)$  the comparison angle $\tilde\sphericalangle xpy$ is $\vk(\eps)$-close to the actual angle $\angle xpy$ between the geodesics $[px]$ and $[py]$. 
Moreover if we take $0<\tilde r(\eps)<\tilde R(\eps)$ such that $\tilde r(\eps)\to\infty$ but $\frac{\tilde r(\eps)}{\tilde R(\eps)}\to 0$ (e.g. $\tilde r=\sqrt{\tilde R}$ will do) then
the same argument applies to any $q\in B_{ {\tilde r}}(p)$ and hence \eqref{comp-close-angle} holds for any such $q$ and any $x,y\in B_{\tilde R}(p)$.

 
 This implies the following. Let $a_1,\ldots,a_n,b_1,\ldots,b_n$ be points in $B_{\tilde R}(p)$ which are  $\vk(\eps)$-close to \\
 $\tilde Re_1,\ldots,\tilde Re_n,-\tilde Re_1,\ldots,-\tilde Re_n$ in $\R^n$. Consider the strainer map $\Phi$ on  $M$ given by $\Phi(x)=(d(x,a_1),\ldots, d(x,a_n))$.  
Then  \eqref{comp-close-angle} and the first variation formula imply that for any $0<r<\tilde r(\eps)$ the gradients of the components of $\Phi$ given by $\nabla \Phi_1,\ldots \nabla \Phi_n$ are $\vk(\eps)$- almost  orthonormal on $B_r(p)$.  By possibly making $\tilde r(\eps)$ smaller but still very slowly going to $\infty$ as $\eps\to 0$ this gives that $\Phi$ is locally $(1+\vk(\eps))$-biLipschitz. This means that the fibers of $\Phi$ are discrete. We claim that the are actually points. Let $x_0\in B_r(p)$ and let $z_0=\Phi(x_0)$.  Any curve $\gamma$ starting at $z_0$ of length $l\le r$ can be uniquely  lifted to a curve $\tilde\gamma$  starting at $x_0$ of length $\le (1+\vk(\eps)l$. 
Let $U_l=\Phi^{-1}(B_l(z_0))$. Then by above we have that $ B_{(1+\vk(\eps))l/2}(x_0)\subset U_l\subset B_{(1+\vk(\eps))l}(x_0)$.
Consider the contraction of the ball $B_1(z_0)$ to $z_0$ along radial lines. This contraction lifts to a deformation retraction of $U_1$ to the fiber $\Phi^{-1}(z_0)$. Therefore  $U_1$ has the same number of connected components as $\Phi^{-1}(z_0)$. On the other hand  $\diam \Phi^{-1}(z_0)\le \vk(\eps)\ll 1$ and  hence $\Phi^{-1}(z_0)\subset B_{1/2}(x_0)\subset U_1$. Since $B_{1/2}(x_0)$ is path connected this implies that $U_1$ and hence $\Phi^{-1}(z_0)$ are path connected too. Hence $\Phi^{-1}(z_0)$ is a point.

Therefore $\Phi$ is 1-1 on $B_r(p)$ and thus it's globally and not just locally Bilipschitz there. This easily implies that
$\Phi$ satisfies \eqref{bilip+GH}.

By setting $\nu(\eps)$ to be the maximum of various $\vk(\eps|N)$ quantities appearing above we obtain the result.

\end{proof}

In the above proof let $A_i\subset B_{10\sqrt{\tilde R}}(a_i),  i=1,\ldots n$ and let us change $\Phi$ to \\ $\Phi(x)=(d(x,A_1),\ldots, d(x,A_1))$.
The same proof shows that thus redefined $\Phi$ still gives a $\vk(\eps)$-Lipschitz-Hausdorff approximation from $B_r(p)$ to the corresponding ball in $\R^n$.
Then we claim that the following holds

\begin{lemma}\label{lem-almost-linear}
The map $\Phi$  satisfies the following 

Let $x\in B_1(p)$ and  $v\in T_xM$ be unit length. Let $\gamma(t)$ be the geodesic with initial vector $v$. Then for any $t\in [0,1]$ we have 
\begin{equation}\label{eq-alm-lin-1}
\Phi(\gamma(t))'=\Phi(\gamma(t))'|_{t=0}\pm\vk(\eps|N)
\end{equation}
 and in particular $\Phi$  is $\vk(\eps)=\vk(\eps|N)$- linear on $B_{10}(p)$ on scale 1. I.e it satisfies
\begin{equation}\label{eq-alm-lin-2}
\Phi(\gamma(t))=\Phi(x)+tD_v\Phi(x)\pm\vk(\eps)
\end{equation}
were $D_v\Phi=d_x\Phi(v)$ is the directional derivative of $\Phi$ at $q$ in the direction $v$ given by the first variation formula.
\end{lemma}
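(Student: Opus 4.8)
\emph{Proposal.} The plan is to first establish the derivative estimate \eqref{eq-alm-lin-1} and then obtain \eqref{eq-alm-lin-2} by integrating it. Writing $\Phi=(\Phi_1,\dots,\Phi_n)$ with $\Phi_i=d(\cdot,A_i)$, each $\Phi_i\circ\gamma$ is $1$-Lipschitz, and the first variation formula gives that its right derivative at $t$ is $-\cos\theta_i(t)$, where $\theta_i(t)$ denotes the smallest angle at $\gamma(t)$ between $\gamma'(t)$ and a minimizing geodesic $[\gamma(t)\,a_i(t)]$ from $\gamma(t)$ to a nearest point $a_i(t)\in A_i$; in particular the $i$-th component of $D_v\Phi(x)$, read off from the first variation formula as in the statement, is $-\cos\theta_i(0)$. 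So \eqref{eq-alm-lin-1} is equivalent to the claim $|\theta_i(t)-\theta_i(0)|\le\vk(\eps|N)$ for all $i$ and $t\in[0,1]$, from which \eqref{eq-alm-lin-2} follows since $\Phi_i(\gamma(t))-\Phi_i(x)=\int_0^t(-\cos\theta_i(s))\,\mathrm{d}s=-t\cos\theta_i(0)\pm\vk(\eps|N)=tD_v\Phi_i(x)\pm\vk(\eps|N)$. I would allow myself to assume $x$ is a point of differentiability of each $\Phi_i$, or just to work with one-sided derivatives throughout, and I note that the nearest point $a_i(t)$ need not be unique -- this will not matter.

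To prove $|\theta_i(t)-\theta_i(0)|\le\vk(\eps|N)$, the key idea is that the direction from $\gamma(t)$ to $A_i$ hardly changes as $t$ ranges over $[0,1]$, because $A_i$ is far away and of small diameter. Fix $t\in[0,1]$ and set $a=a_i(t)$. Since $x\in B_1(p)$ and $\gamma$ has unit speed, $\gamma|_{[0,t]}$ stays in the ball of radius $2$ about $p$, which for small $\eps$ lies inside $\bar B_{1/100\eps}(p)$; that ball is $\CAT(\eps^2)$ by the proof of Lemma~\ref{lem-bilip-almost-split}, so, since $t\le1\ll\pi/\eps$, the arc $\gamma|_{[0,t]}$ is the unique minimizing geodesic $[\gamma(0)\gamma(t)]$, with initial direction $v$ at $\gamma(0)$ and terminal direction $-\gamma'(t)$ at $\gamma(t)$. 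I would then apply the angle comparison of Lemma~\ref{lem-bilip-almost-split} (item (2)) at the vertices $\gamma(t)$ and $\gamma(0)$, both in $B_{\tilde r}(p)$: the angles $\angle_{\gamma(t)}([\gamma(t)\gamma(0)],[\gamma(t)a])=\pi-\theta_i(t)$ and $\angle_{\gamma(0)}(v,[\gamma(0)a])$ each agree up to $\vk(\eps|N)$ with the corresponding angle of the Euclidean triangle with side lengths $d(\gamma(0),\gamma(t))=t$, $d(\gamma(0),a)$ and $d(\gamma(t),a)$. In that triangle the two sides flanking the short side of length $t\le1$ are both of length comparable to $d(p,A_i)\approx\tilde R(\eps)\to\infty$, so by the Euclidean law of sines the angle opposite the short side is $\le\vk(\eps|N)$ and the two comparison angles adjacent to it sum to $\pi\pm\vk(\eps|N)$. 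Chaining these gives $\theta_i(t)=\angle_{\gamma(0)}(v,[\gamma(0)\,a_i(t)])\pm\vk(\eps|N)$. Finally, since $\diam A_i\le20\sqrt{\tilde R(\eps)}$ while $d(\gamma(0),A_i)\approx\tilde R(\eps)$, one more application of the angle comparison at $\gamma(0)$ bounds $\angle_{\gamma(0)}([\gamma(0)\,a_i(t)],[\gamma(0)\,a_i(0)])$ by a Euclidean comparison angle of size $\le C/\sqrt{\tilde R(\eps)}=\vk(\eps|N)$, whence $\angle_{\gamma(0)}(v,[\gamma(0)\,a_i(t)])=\theta_i(0)\pm\vk(\eps|N)$ and so $|\theta_i(t)-\theta_i(0)|\le\vk(\eps|N)$. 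The passage from $x\in B_1(p)$ to $x\in B_{10}(p)$ is the same argument, after noting that all radii above are $\Vk(\eps|N)$ quantities, so $\tilde R(\eps)$ may be enlarged (keeping $\tilde r(\eps)/\tilde R(\eps)\to0$) to keep the angle comparison valid on the needed balls and at the points of the $A_i$.

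I expect the main obstacle to be, conceptually, that Lemma~\ref{lem-bilip-almost-split} only says $\Phi$ is an almost-isometry on scale $1$, which controls distances but not directions; the extra content here is precisely that the direction toward the distant set $A_i$ is almost constant over $B_{10}(p)$, which is what forces geodesics to map to nearly straight segments. Technically the delicate part is bookkeeping the errors: each invocation of angle comparison costs $\nu(\eps)$, while the ``far set'' estimates produce errors of order $1/\tilde R(\eps)$ and $\diam(A_i)/d(p,A_i)\sim 1/\sqrt{\tilde R(\eps)}$, and one has to check that these are all legitimately of the form $\vk(\eps|N)$, which is guaranteed exactly by the constraints $\tilde R(\eps)\to\infty$, $\tilde r(\eps)\to\infty$, $\tilde r(\eps)/\tilde R(\eps)\to0$ in Lemma~\ref{lem-bilip-almost-split}. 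A minor nuisance is the lack of smoothness of $d(\cdot,A_i)$ along its cut locus, which I would handle, as indicated, by one-sided derivatives and possibly non-unique nearest points. As a sanity check, there is also a softer route: $\gamma|_{[0,1]}$ is minimizing and $\Phi$ is $(1+\vk(\eps))$-bi-Lipschitz-Hausdorff close to $\R^n$ near $p$, so $\Phi\circ\gamma$ is a $(1+\vk(\eps))$-quasigeodesic in $\R^n$, hence $\vk(\eps|N)$-close to the affine segment joining its endpoints; one then identifies the direction of that segment with $D_v\Phi(x)$ via the first variation formula as above.
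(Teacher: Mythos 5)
Your argument is correct, and it is a genuinely different route from the one in the paper. The paper's proof transports the problem to $\R^n$ via the GH-approximation $h$ of Lemma~\ref{lem-bilip-almost-split}: it sets $\bar A_i=h(A_i)$ and $\bar\Phi$ the Euclidean strainer map, notes that the lemma is trivial for $\bar\Phi$, and then uses the GH-distance control alone to show $h(\gamma(t))$ stays $\vk(\eps)$-close to the Euclidean segment $\bar\gamma(t)$, so that $\Phi(\gamma(t))\approx\bar\Phi(\bar\gamma(t))$; the angle comparison \eqref{comp-close-angle} enters only once, to identify $D_v\Phi(x)$ with $D_{\bar v}\bar\Phi(\bar x)$, and then \eqref{eq-alm-lin-1} is deduced from \eqref{eq-alm-lin-2} by sliding the base point along $\gamma$. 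You instead work intrinsically in $M$: you read off the right derivative of $\Phi_i\circ\gamma$ as $-\cos\theta_i(t)$ from first variation, establish \eqref{eq-alm-lin-1} directly by applying \eqref{comp-close-angle} at both endpoints of the (CAT-minimizing) arc $\gamma|_{[0,t]}$ and exploiting the near-degeneracy of the comparison triangle with far apex in $A_i$, and then integrate to get \eqref{eq-alm-lin-2} --- the opposite order of deductions. Both yield the same estimate with the same error budget (angle comparison contributes $\nu(\eps)$, far-triangle contributions are $O(1/\tilde R)$ and $O(\diam A_i/\tilde R)=O(1/\sqrt{\tilde R})$, all $\vk(\eps|N)$); the paper's version is a bit slicker because the degeneracy of the far triangle is absorbed into the Euclidean model, while yours isolates the geometric mechanism --- that the direction to a far, small set is nearly constant across $B_{10}(p)$ --- which is worth stating. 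The ``softer route'' you sketch at the end (quasigeodesic image in $\R^n$) is essentially the paper's argument. One small bookkeeping point to make explicit if you write this up: to apply \eqref{comp-close-angle} at $\gamma(t)$ you need the leg $[\gamma(t)\gamma(0)]$ to be realized by $\gamma|_{[0,t]}$; this is exactly what the $\CAT(\eps^2)$ property of $\bar B_{1/100\eps}(p)$ gives, as you note, and it also gives uniqueness of the geodesics $[\gamma(\cdot)\,a]$ used in the first-variation identification of $D_v\Phi$.
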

\begin{proof}
 Let $h\co B_{\tilde R}(p)\to B_{\tilde R}(0)\subset \R^n$ be a $\vk(\eps)$-approximation. Let $\bar A_i=h(A_i)$. Let $\bar \Phi(x)=(d(x, \bar A_1),\ldots, d(x, \bar A_n))$  be the corresponding strainer map in $\R^n$.
Then $|\bar \Phi\circ h-\Phi|\le \vk(\eps)$ on $ B_{\tilde R}(p)$. Let $y=\gamma(1)$, $\bar x=h(x), \bar y= h(y)$ and let $\bar \gamma(t)=\bar x+t(\bar y-\bar x), t\in [0,1]$ be the constant speed geodesic from $\bar x$ to $\bar y$.
From the first variation formula it's easy to see that the lemma holds for $\bar \Phi$ and therefore

\[
\bar \Phi(\bar \gamma(t))=\bar \Phi(\bar x)+tD_{\bar v}\bar \Phi(\bar x)\pm\vk(\eps) \quad \text{ for } t\in [0,1]
\]

where $\bar v=\bar y-\bar x$.

Since $|h(\gamma(t))- \bar \gamma(0)|=t\pm \vk(\eps)$,  $|h(\gamma(t))- \bar \gamma(1)|=(1-t)|\pm \vk(\eps)$ and $ |\bar \gamma(0)- \bar \gamma(1)|=1\pm\vk(\eps)$  we have that $|h(\gamma(t))-\bar \gamma(t)|\le \vk(\eps)$ for all $t\in [0,1]$. Therefore $|\Phi(\gamma(t))-\bar \Phi(\bar \gamma(t))|\le \vk(\eps)$ for all $t\in [0,1]$. Combining this with the above we get that

\[
 \Phi( \gamma(t))=\Phi(x)+tD_{\bar v}\bar \Phi(\bar x)\pm\vk(\eps) \quad \text{ for } t\in [0,1]
\]

Lastly, by the first variation formula and property \eqref{comp-close-angle} from Lemma~\ref{lem-bilip-almost-split} we get that 
\[
|D_{\bar v}\bar \Phi(\bar x)-D_v\Phi(x)|\le \vk(\eps)
\]

Combining the last two displayed formulas we get \eqref{eq-alm-lin-2}.  Since the same result applies to $x=\gamma(t)$ for any $t\in [0,1]$ we get \eqref{eq-alm-lin-1}.

\end{proof}
Note that the map $\Phi$ in the above lemma is not smooth in general. However, by the first variation formula it has a well defined differential at every point $x$ in $B_{\tilde R/2}(p)$ which is 1-homogeneous i.e. satisfies $d_x\Phi(\lambda v)=\lambda d_x\Phi(v)$ for any $v\in T_x$ and any $\lambda>0$.  In particular, this works for maps which are $DC$ (this includes $\Phi$)  i.e. in local coordinates have components which are locally differences of Lipschitz semiconcave functions.

We can extend the notion of an almost Riemannian submersion to maps having this property as follows

\begin{definition}
Let  $f\co M^n\to N^m$ be a Lipschitz map between two Riemannian manifolds such that for any $x\in M$ it has a well defined 1-homogeneous directional derivative map $d_xf\co T_xM\to T_{f(x)}N$.
We will say that $f$ is an $\eps$-almost Riemannian submersion if for any $x\in M$ the differential $d_xf$ restricted to the unit ball around the origin in $T_xM$ is uniformly $\eps$-close to an orthogonal projection.
\end{definition}
With this definition it's obvious that for any $k\le n$ the partial strainer map\\ $\Pi_k=(\Phi_1,\ldots,\Phi_k)\co B_{10}(p)\to \R^k$ in  Lemma~\ref{lem-almost-linear} is a  $\vk(\eps)$-almost Riemannian submersion. Moreover the intrinsic metric on its fibers is Lipschitz close to the extrinsic metric. Combined with an obvious rescaling argument the above gives

\begin{corollary}\label{cor-almost-riem-subm}
Under the assumptions  of Lemma~\ref{lem-almost-linear} for any $k\le n$ and any fixed $R>0$ for all small $\eps$ it holds that  the partial strainer map $\Pi_k=(\Phi_1,\ldots,\Phi_k)\co B_{R}(p)\to \R^k$ is a  $\vk(\eps|R)$-almost Riemannian submersion and
 $\Pi_k^{-1}(0)\cap B_{2R}(p)$ is $\vk(\eps|R)$-Lipschitz-Hausdorff close to $B_{2R}(0)\subset \R^{n-k}$.

\end{corollary}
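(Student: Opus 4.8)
\textbf{Proof proposal for Corollary~\ref{cor-almost-riem-subm}.}
The plan is to reduce the statement to Lemma~\ref{lem-almost-linear} and Lemma~\ref{lem-bilip-almost-split} by a rescaling argument, so that the fixed radius $R$ disappears and one is back in the unit-scale setting already handled. First I would apply the hypothesis with $\eps$ replaced by $\eps' = R\eps$ (equivalently, rescale the metric $g$ by $1/R^2$): under this rescaling $\sec \le (\eps')^2$, $\Ric_{f,N}\ge -(N-1)(\eps')^2$, the injectivity radius at $p$ becomes $\ge 1/(R\eps')\cdot\text{something}$ — more precisely, $\ge \tfrac1{R\eps'} = \tfrac1{\eps}$ once we track constants — so for $\eps$ small the scaled manifold satisfies the hypotheses of Lemma~\ref{lem-bilip-almost-split} and Lemma~\ref{lem-almost-linear}. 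On the rescaled manifold, the ball $B_R(p)$ becomes $B_1(\tilde p)$, and Lemma~\ref{lem-almost-linear} gives that the strainer map $\Phi$ built there is $\vk(\eps|N)$-linear on scale $1$ on $B_{10}$; undoing the rescaling, the corresponding $\Phi$ on the original $B_R(p)$ is $\vk(\eps|R)$-linear on scale $R$. The key point is that the error functions in those lemmas only depend on $N$ (and, after rescaling, on $R$ through $\eps' = R\eps$), so the composite error is of the claimed form $\vk(\eps|R)$.

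Next I would deduce the almost Riemannian submersion property for the partial strainer map $\Pi_k = (\Phi_1,\ldots,\Phi_k)$. By the remark following Lemma~\ref{lem-almost-linear}, each $\Phi_j$ has a well-defined $1$-homogeneous directional derivative $d_x\Phi_j$ at every point of $B_{\tilde R/2}$, and by property \eqref{comp-close-angle} of Lemma~\ref{lem-bilip-almost-split} together with the first variation formula the gradients $\nabla\Phi_1,\ldots,\nabla\Phi_n$ are $\vk(\eps)$-almost orthonormal on $B_r(p)$ — this was already established inside the proof of Lemma~\ref{lem-bilip-almost-split}. Restricting to the first $k$ components, $d_x\Pi_k$ restricted to the unit ball of $T_xM$ is then $\vk(\eps|R)$-close to the orthogonal projection onto $\operatorname{span}(\nabla\Phi_1,\ldots,\nabla\Phi_k)$, which after an orthonormal change of coordinates is the standard projection $\R^n\to\R^k$; this is exactly the definition of a $\vk(\eps|R)$-almost Riemannian submersion for the generalized (DC) class of maps introduced above.

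Finally, for the statement about the fibers I would run the same deformation-retraction argument that appears at the end of the proof of Lemma~\ref{lem-bilip-almost-split}, but now for the reduced map $\Pi_k$ with target $\R^k$ rather than the full strainer map $\Phi$ with target $\R^n$. Concretely: $\Pi_k^{-1}(0)\cap B_{2R}(p)$ is the fiber of an almost Riemannian submersion, so contracting $B_1(0)\subset\R^k$ (after rescaling, $B_{2R}(0)$) radially to $0$ lifts via horizontal lifts to a deformation retraction of a neighborhood onto $\Pi_k^{-1}(0)$, showing the fiber is connected; combined with the local $(1+\vk(\eps))$-biLipschitz property of the full strainer map $\Phi = (\Pi_k, \Phi_{k+1},\ldots,\Phi_n)$, the remaining components $(\Phi_{k+1},\ldots,\Phi_n)$ restrict to a $\vk(\eps|R)$-Lipschitz-Hausdorff approximation from $\Pi_k^{-1}(0)\cap B_{2R}(p)$ onto $B_{2R}(0)\subset\R^{n-k}$, with intrinsic and extrinsic metrics on the fiber comparable up to $\vk(\eps|R)$. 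I expect the main obstacle to be purely bookkeeping: carefully tracking how the injectivity-radius lower bound, the compactness of the large ball, and the various radii $\tilde R(\eps),\tilde r(\eps)$ transform under the rescaling by $R$, and checking that after the rescaling $\eps' = R\eps$ is still small enough that all the $\vk$-estimates from the two lemmas apply simultaneously on a ball of the required size — there is no new geometric input beyond what those lemmas already provide.
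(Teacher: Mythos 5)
Your proposal is correct and matches the paper's intended argument: the Corollary is presented there as ``an obvious rescaling argument'' combined with the observation that $\Pi_k$ is an almost Riemannian submersion because $\nabla\Phi_1,\ldots,\nabla\Phi_n$ are $\vk(\eps)$-almost orthonormal, and that the fiber $\Pi_k^{-1}(0)$ inherits the Lipschitz-Hausdorff closeness from the full biLipschitz strainer map $\Phi$. One small arithmetic slip worth fixing: after rescaling $g$ by $1/R^2$ the injectivity radius at $p$ becomes $\ge \tfrac{1}{R\eps} = \tfrac{1}{\eps'}$ (not $\tfrac{1}{R\eps'}$), which is exactly what is needed to apply Lemma~\ref{lem-bilip-almost-split} and Lemma~\ref{lem-almost-linear} with $\eps'=R\eps$ in place of $\eps$.
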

\begin{remark}\label{rem-pseudo-1}
It is obvious that  any two curves in $\R^{n-k}$ of length $\le l$ with the same end points are homotopic  rel endpoints through curves of length $\le l$. 
Therefore in the above Corollary  when $\eps$ is small enough so that $\vk(\eps|R)<1/10$  any two curves of length $\le l\le R$ in  $\Pi_k^{-1}(0)\cap B_{R}(p)$ with the same end points are homotopic in $\Pi_k^{-1}(0)\cap B_{2R}(p)$ rel endpoints through curves of length $\le 2l$.
\end{remark}

\section{Fibration theorem}\label{sec:fibration thm}

In this section we prove the Fibration Theorem. We will prove the following pointed version of the Fibration Theorem which allows for non-compact limits

\begin{theorem}[Fibration theorem]\label{pointed fibration-thm}

Let $( M_i^n, p_i)\to (B^m, p)$ in  pointed Gromov\textendash Hausdorff topology where $B^m$ is a smooth Riemannian manifold  and all $(M_i, g_i, e^{-f_i}\cdot \mathcal H_n)$ satisfy \eqref{eq:cd+cba}. 

Fix  $R>0$  and let $h_i\co B_R(p_i)\to B_R(p)$ be $\eps_i$-GH-approximations with $\eps_i\to 0$ as $i\to \infty$.

Then there are  $\nu_i\to 0$
and  smooth maps $\pi_i\co B_R(p_i)\to B$  such that for all large $i$ it holds that 

\begin{enumerate}
\item\label{fibr-thm-gh-approx}  $\pi_i$ is $\nu_i$-uniformly close to $h_i$.
\item\label{fibr-thm-almost-rieman} $\pi_i$ is a  $\nu_i$-almost Riemannian submersion.

\item\label{fibr-thm-intrinsic-fibers} The intrinsic metrics on the fibers $\pi_i^{-1}(x)$ is $(1+\nu_i)$-Bilipschitz to the extrinsic metrics for all $x\in B_{R/2}(p)$.
\end{enumerate}

\end{theorem}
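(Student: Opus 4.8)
The plan is to combine the smoothing and averaging technique from Cheeger--Fukaya--Gromov with the infinitesimal-to-large-scale rigidity established in Section~\ref{sec:  RCD+CAT} via the pseudo-cover construction. First I would fix a point $q_i \in B_R(p_i)$ and pass to the pseudo-cover $\hat M_i = (B_{\Rk{\eps_i^{-2}}}(\cvertex) \subset T_{q_i}M_i, \exp_{q_i}^*g_i)$, after first rescaling so that $\sec_{M_i} \le \eps_i^2$ and $\Ric_{f_i,N} \ge -(N-1)\eps_i^2$. By the construction of Section~\ref{sec:pseudo-covers}, the injectivity radius of $\hat M_i$ at $\cvertex$ is $\ge 1/\eps_i \to \infty$, so the local lift is \emph{non-collapsed}: Lemma~\ref{lem-bilip-almost-split} and Corollary~\ref{cor-almost-riem-subm} apply and give, on a ball $B_{\tilde R(\eps_i)}(\cvertex) \subset \hat M_i$, a partial strainer map $\Pi^{(i)} = (\Phi_1^{(i)},\ldots,\Phi_m^{(i)}) \colon B_{\tilde R}(\cvertex) \to \R^m$ which is a $\nu(\eps_i)$-almost Riemannian submersion and $\nu(\eps_i)$-linear on unit scale, built from distance functions to suitable sets $A_j$ inside the lift. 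The number $m$ of components is chosen to match $\dim B$: since $B_R(p_i) \to B_R(p)$ and $B$ is a smooth $m$-manifold, the (infinitesimal, hence large-scale) geometry of $\hat M_i$ near $\cvertex$ is that of $\R^n$, and the GH-approximation $h_i$ tells us which $m$-dimensional quotient directions to strain.

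Next I would descend this map to $M_i$. The pseudo-group $\Gamma_i$ of short geodesic loops at $q_i$ acts on $\hat M_i$, and $\pi_i^{\mathrm{loc}} := \exp_{q_i} \colon \hat M_i \to M_i$ is $\Gamma_i$-invariant. The key point is that the strainer map $\Pi^{(i)}$ is \emph{almost} $\Gamma_i$-invariant: by the GH-approximation $h_i$ and the fact that the $\Gamma_i$-orbits project to the (small) fibers of the would-be fibration over $B$, the displacement functions $\Phi_j^{(i)}$ vary by $o(1)$ along $\Gamma_i$-orbits. Averaging $\Pi^{(i)}$ over a suitable finite portion of the pseudo-group orbit (as in Gromov--Cheeger--Fukaya, using a center-of-mass or smooth partition-of-unity average in the non-collapsed $\hat M_i$) produces a genuinely $\Gamma_i$-invariant map $\bar\Pi^{(i)}$ that still is a $\nu_i'$-almost Riemannian submersion with $\nu_i' \to 0$; it then factors through a map $\tilde\pi_i \colon B_R(p_i) \to \R^m$ defined near the basepoint. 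To globalize over all of $B_R(p_i)$ I would repeat the construction at a fine $\eps_i$-net of base points, average the resulting local maps against a partition of unity on $B$ pulled back via $h_i$, and use Remark~\ref{rem: loc-linear} (post-composition with strainer maps on $B$ barely affects almost-linearity) to patch them into a single smooth map $\pi_i \colon B_R(p_i) \to B$. Finally I would smooth $\pi_i$ by mollification at scale $\ll$ the injectivity radius of the lift, which preserves the almost-Riemannian-submersion property since all estimates were $C^1$ in nature; properties \eqref{fibr-thm-gh-approx} and \eqref{fibr-thm-almost-rieman} follow, and \eqref{fibr-thm-intrinsic-fibers} follows from the last clause of Corollary~\ref{cor-almost-riem-subm}, which already gives the intrinsic-versus-extrinsic bilipschitz control for fibers of the strainer map, preserved under the averaging and patching.

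The main obstacle I expect is the \textbf{equivariant averaging over the pseudo-group}: unlike an honest group action, the pseudo-action of $\Gamma_i$ on $\hat M_i$ is only partially defined, so one must (i) cut off the averaging to the part of the orbit lying in a ball of radius $\ll \Rk{\eps_i^{-2}}$ where the pseudo-action is honest and free and properly discontinuous, (ii) verify that the orbit is dense enough at the relevant scale that averaging actually kills the non-invariance without destroying the $C^1$-estimates, and (iii) check that the resulting invariant map descends to $M_i$ (not just to $\hat M_i / \Gamma_i$, which may differ from a neighborhood in $M_i$ because $\Gamma_i$ need not act on all of $\hat M_i$). This is exactly the technical difficulty flagged in Remark~\ref{rem: non-injective}, and it is handled by working with $\Gamma_i(\rho)$ for a radius $\rho$ with $2\diam$ of the relevant region $\ll \rho \ll \Rk{\eps_i^{-2}}$, so that Lemma~\ref{lem:pseudo-onto} applies and the pseudo-group is ``large enough'' while remaining ``small enough'' to act honestly. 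A secondary, more routine difficulty is bookkeeping the compounding of the various $\vk(\eps_i)$-errors through averaging, patching and smoothing so that a single sequence $\nu_i \to 0$ works; this is standard once the equivariant averaging is in place.
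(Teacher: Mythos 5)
Your high-level outline (pseudo-cover, strainer map, local-to-global patching) matches the paper's in spirit, but your central mechanism for making the strainer map descend from $\hat M_i$ to $M_i$ --- building it upstairs, noting that it is only \emph{almost} $\Gamma_i$-invariant, and then averaging over a finite portion of the pseudo-group orbit --- is an extra, unresolved, and in fact unnecessary step. The paper instead defines the strainer map \emph{downstairs}, $\Phi(x)=(d_{M_i}(x,a_1),\dots,d_{M_i}(x,a_m))$, where $a_j$ are lifts (under $h_i$) of strainer points in $B$. Since $\exp_{q_i}$ is a local submetry, $d_{M_i}(\exp(x),a_j)=d_{\hat M_i}(x,\exp^{-1}(a_j))$, and on a ball of radius $\ll R_\delta$ around $\cvertex$ this equals $d_{\hat M_i}(x,A_j)$ for the truncated preimages $A_j=\exp^{-1}(a_j)\cap B_{R_\delta+1}(\cvertex)$. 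Thus $\hat\Phi=\Phi\circ\exp$ \emph{exactly}, with no near-invariance and no averaging: the lift to $\hat M_i$ is used only to \emph{verify}, via Lemma~\ref{lem-bilip-almost-split} and Lemma~\ref{lem-almost-linear}, that $\Phi$ is an almost Riemannian submersion that is almost linear on scale $\delta$. Your proposed equivariant averaging, by contrast, leaves open exactly the issues you flag (cutting off the partial action, orbit density at the right scale, the mismatch between $\hat M_i/\Gamma_i$ and a neighborhood in $M_i$), and none of these is automatic; the pseudo-action is only partially defined and its quotient need not agree with the ball in $M_i$ you want, so ``average and descend'' is not a substitute for actually constructing the map downstairs.

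There is a second gap in the gluing step. You invoke a partition of unity and Remark~\ref{rem: loc-linear} to patch the local maps, but the Remark only says that post-composing with near-isometries barely affects almost-linearity; it does not say that two different local fibration maps over overlapping balls are $C^1$-close to each other. That $C^1$-closeness is exactly what is needed for the center-of-mass average to remain an almost Riemannian submersion, and it is the content of Lemma~\ref{local-fibr-C1-close} in the paper, whose proof uses the almost-linearity on scale $\delta$ to characterize horizontal vectors as those along almost-minimizing geodesics, and then shows the differentials of the two local maps agree up to $\vk(\delta)$ on those horizontal spaces. Without stating and proving that lemma, the ``routine bookkeeping of compounding errors'' you describe does not close: the naive triangle inequality on the $C^0$ level does not control the derivative of the blended map.
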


We first show that the Fibration Theorem holds locally on the base.

\begin{proposition}[Local Fibration Theorem]\label{local fibr thm}
 Let $( M_i^n,g_i,e^{-f_i}\cdot \mathcal H_n, p_i)$ satisfy  \eqref{eq:cd+cba}.
Suppose  $( M_i^n, p_i)\to (B^m, p_\infty)$ in  pointed Gromov\textendash Hausdorff topology where $B^m$ is smooth.\\  Let $d_{GH}(B_1(p_\infty), B_1(p_i))=\eps_i\to 0$ and let $h_i\co  B_1(p_i))\to  B_1(p_i)$ be an $\eps_i$-GH-approximation.

Then there is $\delta_0=\delta_0(p_\infty)>0$ such that for  any fixed  $0<\delta<\delta_0$  there exist smooth maps $\pi_i\co B_\delta(p_i)\to B$ which for all large $i$  satisfy

\begin{enumerate}
\item \label{gh-approx-close} $\pi_i$ is $\vk(\eps_i)$-close to the original GH-approximation $h_i$;
\item \label{alm-reim-1}$\pi_i$ is a  $\vk(\delta|N)$-almost Riemannian submersion;
\item \label{alm-linear-1}$\pi_i$ is   $\vk(\delta|N)$-linear on scale $\delta$;
\item \label{bilip-intrinsic} The intrinsic metric on the fibers  $\pi_i^{-1}(y)$ for $y\in B_{\delta/2}(p_\infty)$ is $(1+\vk(\delta))$-Bilipschitz to the extrinsic metric.
\end{enumerate}

\end{proposition}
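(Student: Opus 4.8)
The plan is to construct $\pi_i$ as a smoothed version of a partial strainer map on the pseudo-cover, and to transfer the almost-linearity and almost-submersion properties down to $M_i$ using the fact that the pseudo-cover is noncollapsed. First I would fix a small radius $r_0 \ll \injrad(p_\infty)$ and an orthonormal basis $e_1,\dots,e_m$ of $T_{p_\infty}B$, and set $a_j = \exp_{p_\infty}(r_0 e_j)$; by Remark~\ref{rem: loc-linear} it suffices to produce $\pi_i$ whose composition with the strainer map $\Phi^B(x) = (d(x,a_1),\dots,d(x,a_m))$ near $p_\infty$ has the required almost-linearity, since $\Phi^B$ is close to an isometry in $C^3$ on a small ball. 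Pull the picture up to the pseudo-cover: for each $i$, let $\hat p_i = \cvertex \in \hat M_i = B_{\Rk{\eps_i}}(\cvertex) \subset T_{p_i}M_i$, which by the construction in Section~\ref{sec:pseudo-covers} has injectivity radius $\gtrsim 1/\sqrt{\eps_i}\to\infty$ at $\hat p_i$ while still carrying $\sec \le \eps_i$ and (via the pullback of $f_i$) $\Ric_{f_i,N}\ge -\eps_i$. Thus $\hat M_i$ satisfies the hypotheses of Lemma~\ref{lem-bilip-almost-split} with parameter $\eps\to 0$, so on a ball of fixed radius around $\hat p_i$ we get a Lipschitz--Hausdorff approximation to $\R^n$ and, by Lemma~\ref{lem-almost-linear} and Corollary~\ref{cor-almost-riem-subm}, a DC partial strainer map $\hat\Pi_{i}\co B_{R_0}(\hat p_i)\to\R^m$ which is $\vk(\eps_i)$-linear on scale $1$ and a $\vk(\eps_i)$-almost Riemannian submersion. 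The ball $B_{R_0}(\hat p_i)$ carries the free properly discontinuous isometric pseudo-action of $\Gamma(R)$, and $\hat\Pi_i$ descends (after the standard averaging over the pseudo-group, which changes it by $\vk(\eps_i)$) to a map on $B_\delta(p_i)\subset M_i$ for $\delta$ a small fixed constant, because all collapsing in $M_i$ near $p_i$ is accounted for by short geodesic loops and the strainer directions are $\Gamma$-invariant up to $\vk(\eps_i)$.

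Next I would match this descended map with $h_i$. Since $B_\delta(p_i)\to B_\delta(p_\infty)$ and $\Phi^B\circ h_i$ is a strainer map on the base, the two strainer maps (the one built on $M_i$ and $\Phi^B\circ h_i$) differ by $\vk(\eps_i)$ by the rigidity of strainers; composing with $(\Phi^B)^{-1}$ near $p_\infty$ produces a continuous map $B_\delta(p_i)\to B$ that is $\vk(\eps_i)$-close to $h_i$, proving~\eqref{gh-approx-close}. The almost-linearity on scale $\delta$ in~\eqref{alm-linear-1} and the almost-Riemannian-submersion property~\eqref{alm-reim-1} follow from Corollary~\ref{cor-almost-riem-subm} applied on scale $\delta$ after rescaling: shrinking $\delta$ only improves the closeness to the Euclidean model, so the error is $\vk(\delta|N)$ (plus $\vk(\eps_i)$, which for large $i$ is dominated). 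For~\eqref{bilip-intrinsic}, the fibers of the strainer map have intrinsic metric Lipschitz-close to the extrinsic one by the last assertion of Corollary~\ref{cor-almost-riem-subm} together with Remark~\ref{rem-pseudo-1}, and this passes to the quotient since the pseudo-action is isometric and has displacement $<R$.

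Finally, to obtain an honestly \emph{smooth} $\pi_i$ (the strainer map is only DC) I would mollify: embed $B$ in Euclidean space and smooth the components of the distance-to-$a_j$ functions at a scale $\tau_i \to 0$ much smaller than $\delta$ but much larger than the injectivity scale would require, using that the strainer components are semiconcave hence second-order differentiable a.e.\ with controlled Hessian bounds on compacta away from the $a_j$'s; standard smoothing (e.g.\ convolution in harmonic coordinates, as in Cheeger--Fukaya--Gromov) perturbs the map, its differential, and the fiber metrics by $\vk(\tau_i) + \vk(\eps_i)$, preserving all four conclusions. The main obstacle I anticipate is item~\eqref{gh-approx-close} combined with the descent step: one must be careful that the strainer map built upstairs on $\hat M_i$ genuinely descends to $M_i$ — i.e.\ that the short geodesic loops move the strainer points by only $\vk(\eps_i)$ — which is exactly where the noncollapsing of the pseudo-cover and the almost-flatness (so that the pseudo-group acts with nearly-Euclidean, nearly-translational behavior on the relevant ball) are used in an essential way; making this averaging argument quantitative, uniformly in $i$, is the technical heart of the proof.
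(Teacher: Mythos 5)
Your high-level picture (pseudo-covers, strainers, Lemma~\ref{lem-bilip-almost-split}, Lemma~\ref{lem-almost-linear}) is right, but the descent step you flag as ``the technical heart of the proof'' is in fact a genuine gap, and the direction of your construction is what creates it. You build a strainer map $\hat\Pi_i$ \emph{upstairs} on $\hat M_i$ (using strainer points living in the pseudo-cover) and then propose to push it down to $M_i$ by ``standard averaging over the pseudo-group.'' There is no such standard averaging: the pseudo-action of $\Gamma$ is only a pseudo-action (not every product is defined), and more to the point, a strainer map built from points of $\hat M_i$ is not approximately $\Gamma$-invariant. The short geodesic loops are exactly what produce the collapse, so they move such strainer points a macroscopic amount; ``the strainer directions are $\Gamma$-invariant up to $\vk(\eps_i)$'' is not something you can get for free, and would in any case require the conclusion you are trying to prove.

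The paper's proof goes in the \emph{opposite} direction and thereby makes the descent automatic. It first rescales $M_i$ and $B$ by $1/\delta$ (so curvature becomes $\le\delta^2$ and the error terms come out as $\vk(\delta)$ rather than something tied only to $\eps_i$ --- this rescaling is essential to produce the $\vk(\delta|N)$ in items (2)--(4) and is only gestured at in your writeup), then lifts strainer points $a_j,b_j$ from $B$ into $M_i$ via the GH-approximation, and defines $\Phi_i(x)=(d(x,a_j))_j$ directly \emph{downstairs}. It then lifts $\Phi_i$, not downward but upward, to the pseudo-cover by setting $\hat\Phi(x)=(d(x,A_j))_j$ where $A_j=\exp^{-1}(a_j)\cap B_{R_\delta+1}(\cvertex)$. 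The key identity, which replaces any averaging, is that $\hat\Phi=\Phi\circ\exp$ on $B_{1/10}(\cvertex)$: the nearest preimage of $a_j$ from any nearby point is already in $A_j$, because $\exp$ is a local submetry. Since $\hat M_i$ has large injectivity radius, Lemma~\ref{lem:euclid-est} shows the sets $A_j,B_j$ form a strainer (they have bounded diameter), Lemma~\ref{lem-almost-linear} and Corollary~\ref{cor-almost-riem-subm} give the submersion and linearity properties of $\hat\Phi$, and these push down to $\Phi$ because $\exp$ is a local isometry. In short: don't try to descend a map from the pseudo-cover; define the map on $M_i$ and check its properties by lifting distance functions to preimage sets, where the lift is exact and no invariance has to be manufactured. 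The remaining bits of your proposal (matching with $h_i$, the bilipschitz fibers via Corollary~\ref{cor-almost-riem-subm} and Remark~\ref{rem-pseudo-1}, smoothing the DC strainer map) are all consistent with the paper, though the paper's smoothing is the simpler $d(\cdot,a_j)\mapsto\int_{B_\nu(a_j)}d(\cdot,y)\,d\vol_y$.
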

For the proof we will need the following elementary lemma in Euclidean geometry which we will leave to the reader as an exercise. 

\begin{lemma}\label{lem:euclid-est}There is universal $0<\eps_0<1$ such that the following holds.

Let  $n\ge 1$ be an integer and let $A,B$ be subsets in $\R^n$ satisfying  $e_1\in A, -e_1\in B$ and $d(A,B)=2$.

Then $\diam (A\cap B_{1+\eps}(0))\le 5\sqrt{\eps}$ and $\diam (B\cap B_{1+\eps}(0))\le  5\sqrt{\eps}$ for all $\eps<\eps_0$.

\end{lemma}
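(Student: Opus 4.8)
\textbf{Proof plan for Lemma~\ref{lem:euclid-est}.}

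The plan is to reduce the statement to a one-dimensional convexity estimate along the segment joining a fixed point to a near point of the opposite set. First I would fix a point $a\in A\cap B_{1+\eps}(0)$; the goal is to bound $|a-e_1|$, since then the triangle inequality gives $\diam(A\cap B_{1+\eps}(0))\le 2\sup_a|a-e_1|$. Because $d(A,B)=2$ and $-e_1\in B$, we have $|a-(-e_1)|=|a+e_1|\ge 2$. So I have the two constraints $|a+e_1|\ge 2$ and $|a|\le 1+\eps$, and I want to conclude $|a-e_1|\le C\sqrt\eps$. This is now a purely elementary plane-geometry problem in the two-dimensional plane spanned by $a$ and $e_1$ (if $a$ is parallel to $e_1$ it is trivial), so without loss of generality $n=2$.

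Next I would write $a=(x,y)$ with $e_1=(1,0)$. The constraint $|a|^2=x^2+y^2\le (1+\eps)^2\le 1+3\eps$ (for $\eps<1$) and $|a+e_1|^2=(x+1)^2+y^2\ge 4$ combine: subtracting gives $2x+1\ge 4-(1+3\eps)=3-3\eps$, hence $x\ge 1-\tfrac{3}{2}\eps$. Then $y^2\le 1+3\eps-x^2\le 1+3\eps-(1-\tfrac32\eps)^2 = 1+3\eps-1+3\eps-\tfrac94\eps^2\le 6\eps$, so $|y|\le \sqrt{6\eps}$. Finally $|a-e_1|^2=(x-1)^2+y^2\le \tfrac94\eps^2+6\eps\le 7\eps$ for $\eps<\tfrac{4}{9}$, so $|a-e_1|\le\sqrt{7\eps}\le \tfrac52\sqrt\eps$ and therefore $\diam(A\cap B_{1+\eps}(0))\le 5\sqrt\eps$. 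By the symmetry $A\leftrightarrow B$, $e_1\leftrightarrow -e_1$, the same bound holds for $B\cap B_{1+\eps}(0)$. One takes $\eps_0$ to be any constant $<\tfrac49$ (say $\eps_0=\tfrac14$) to justify all the inequalities above.

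I do not anticipate a genuine obstacle here; the only mildly delicate point is keeping the numerical constants honest (one must verify $\tfrac94\eps^2\le \eps$ and $(1+\eps)^2\le 1+3\eps$ on the chosen range of $\eps$, which forces $\eps_0<\tfrac49$), and checking that the reduction to the plane spanned by $a$ and $e_1$ loses nothing — the minimizing configuration always lives in that plane since projecting $a$ orthogonally onto it can only decrease $|a|$ and $|a-e_1|$ while not increasing $|a+e_1|$ beyond what is needed. Since the lemma is stated as an exercise left to the reader, a brief remark recording the reduction and the displayed chain of inequalities suffices.
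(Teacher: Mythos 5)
The paper leaves this lemma to the reader as an exercise, so there is no official proof to compare against; your approach is the natural one and is essentially correct in structure, but there is a genuine arithmetic slip in the last step that actually breaks the stated constant.

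You reduce to bounding $\sup_{a}|a-e_1|$ and derive $|a-e_1|^2\le \tfrac94\eps^2+6\eps\le 7\eps$ for $\eps<\tfrac49$, then assert $\sqrt{7\eps}\le\tfrac52\sqrt\eps$. This is false: $\sqrt7\approx 2.646>\tfrac52$, so your chain only yields $\diam\le 2\sqrt{7}\,\sqrt\eps\approx 5.29\,\sqrt\eps$, which does not establish the claimed $5\sqrt\eps$. The constant was lost in the wasteful bound $(1+\eps)^2\le 1+3\eps$; with that step in place there is no choice of $\eps_0$ that rescues the factor of $5$.

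The cleanest repair is to bypass the coordinate computation entirely and use the parallelogram law, which also removes the (harmless but unnecessary) reduction to $n=2$. For any $a\in A\cap B_{1+\eps}(0)$ one has $|a+e_1|\ge d(A,B)=2$ since $-e_1\in B$, and
\[
|a-e_1|^2 = 2|a|^2 + 2|e_1|^2 - |a+e_1|^2 \le 2(1+\eps)^2 + 2 - 4 = 4\eps + 2\eps^2 \le 6\eps
\]
for $\eps<1$. Since $e_1\in A\cap B_{1+\eps}(0)$, the triangle inequality gives $\diam\bigl(A\cap B_{1+\eps}(0)\bigr)\le 2\sqrt{6\eps}$, and $2\sqrt6<5$ because $24<25$. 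The bound for $B$ is identical by the symmetry $A\leftrightarrow B$, $e_1\leftrightarrow -e_1$. Any $\eps_0<1$ works. Alternatively, your coordinate argument can be salvaged by keeping $(1+\eps)^2=1+2\eps+\eps^2$ exactly instead of rounding up to $1+3\eps$: one then finds $|a-e_1|^2\le 4\eps+2\eps^2$, the same sharp bound, and the constant $5$ comes out with room to spare.
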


\begin{proof}[Proof of the Local Fibration theorem]
By rescaling  we can assume that $\sec(M_i)\le 1, \Ric_{f_i,N}(M_i)\ge -(N-1)$ and $|\sec (B_1(p_\infty))|\le 1$ and $\injrad(q)\ge 1$ for any $q\in B_1(p_\infty)$.

 Let $R_\delta=\tilde R(\delta)$ where $\tilde R(\eps)$ the function  provided by Lemma~\ref{lem-bilip-almost-split}.

The curvature bound $|\sec_B|\le 1$ implies $\Ric_B\ge -(N-1)$. Therefore by  Lemma~\ref{lem-bilip-almost-split}  there is $\delta_0$ such that for all  $0<\delta<\delta_0$ we have

\begin{equation}\label{eq:alm-eucl-ball-1}
\frac{1}{\delta}B_{\delta\cdot R_\delta}(p_\infty) \text{ is } \vk(\delta)\text{-GH close to  }B_{R_\delta}(0)\subset \R^m
\end{equation}

Observe that by construction of $\R_\delta$ we have that $\delta\cdot R_\delta\le \sqrt\delta$.

Next, it is obvious that if we rescale the sequence and the limit by $1/\delta$ we still have the convergence $( \frac{1}{\delta}M_i, p_i)\to ( \frac{1}{\delta} B^m, p_\infty)$. More precisely, if $\eps_i(r)=d_{GH}(B_r(p), B_r(p_i))\to 0$ as $i\to \infty$ then
\begin{equation}\label{eq:large i}
d_{GH}(\frac{1}{\delta}B_r(p_\infty),\frac{1}{\delta} B_r(p_i))=\frac{\eps_i(r)}{\delta}=\vk(i|\delta, r)
\end{equation}
Let us denote $\frac{1}{\delta}M_i$ by $\Mid$ and $\frac{1}{\delta}B$ by $B_\delta$.

 Note that $( \Mid,e^{-f_i}\cdot \mathcal H_n)$ has $\sec\le \delta^2$ and satisfies $\Ric_{f_i,N}\ge -\delta^2$.
 
 Combining \eqref{eq:alm-eucl-ball-1} and \eqref{eq:large i} we get that

\begin{equation}\label{eq:large i-2}
d_{GH}(B_{R_\delta}^{\Mid}(p_i),B^{\R^m}_{R_\delta}(0))=\frac{\eps_i}{\delta}+\vk(\delta)=\vk(\eps_i|\delta)+\vk(\delta)
\end{equation}

 Let $a_1^\infty,\ldots, a_m^\infty, b_1,\ldots b_m^\infty$ be points in $B_{R_\delta}^{B_\delta}(p_\infty)$ such that  they  are $\vk(\delta)$-close to\\ $R_\delta e_1,\ldots,R_\delta  e_m,-R_\delta e_1,\ldots,-R_\delta e_m$ in $\R^m$. Lift them to $\vk(\eps_i|\delta)$-close points $a_1^i,\ldots, a_m^i, b_1^i,\ldots b_m^i$ in $\Mid$. Let $\Phi_\infty$ and $\Phi_i$ be the corresponding strainer maps on $B_\delta$ and $\Mid$ given by $\Phi_\infty(x)=(d(x,a_1^\infty ),\ldots, d(x,a_m^\infty))$ and  $\Phi_i(x)=(d(x,a_1^i),\ldots, d(x,a_m^i))$. Here distances are measured in $\Mid$ and $B_\delta$.
Then $\Phi_\infty$ is a local diffeomorphism near $p$. Set $\pi_i\co \Mid \to B_\delta$ given by $\pi_i=\Phi_\infty^{-1}\circ \Phi_i$.

{\bf Claim:} The maps $\pi_i$ satisfy the conclusions of the Proposition.

First, it is easy to see that since $\Phi_\infty$ is $(1+\delta)$-Bilipschitz on $B_{10}(p_\infty)$ in $B_\delta$ , the map $\pi_i$ is $\vk(\eps_i)$-close to $h_i$ for large $i$. This verifies part  \eqref{gh-approx-close} of th Local Fibration theorem.

The main issue is to verify the almost linearity and almost Riemannian submersion properties.

Obviously $B_{1}^{\Mid}(p_i)=B_\delta^{M_i}(p_i)$.
It is also obvious that being an $\eps$-almost Riemannian submersion is invariant under simultaneous rescaling of the domain and the target by the same constant. Therefore, if we show that  $\pi_i\co \Mid \to B_\delta$ is a   $\vk(\delta)$-almost Riemannian submersion then the same holds for $\pi_i$ viewed as a map from $M_i$ to $B$.

Also,  $\pi_i\co \Mid \to B_\delta$ is $\eps$-almost linear on scale 1 iff $\pi_i$ viewed as a map from $M_i$ to $B$ is $\eps$-almost linear on scale $\delta$.

 Next note that  since $\Phi_{\infty}$ is a $\vk(\delta)$-isometry on $B_1^{B_\delta}(p_\infty)$ which is $\vk(\delta)$-almost linear on scale $1$, it's enough to show that $\Phi_{i}$ is a $\vk(\delta)$-almost Riemannian submersion which is $\vk(\delta)$-almost linear on scale  1 on the unit ball around $p_i$ in $ \Mid$.
 
 Let us  fix a small $\delta$ such that $\vk(\delta)\ll 1$ and  an $i$ large enough so that $\frac{\eps_i}{\delta}\ll\delta$.  From now on we will work with $\Mid$ and all distances will be measured in that space and not in $M_i$. Since  $i$ and $\delta$ will be fixed we will drop the $i,\delta$ subindices and 
 work with $M=M_{i,\delta}=\frac{1}{\delta} M_i$ and $\Phi=\Phi_i$.

By the choice of $\delta $ and $i$ and  \eqref{eq:large i-2} we have that

\begin{equation}\label{eq:large i-3}
d_{GH}(B_{R_\delta}(p),B^{\R^m}_{R_\delta}(0))\le \vk(\delta)
\end{equation}
 
We have a strainer map $\Phi\co M\to \R^m$ given by
 $\Phi(x)=(d(x,a_1),\ldots, d(x, a_m))$ where $B_{R_\delta(p)}$ is $\vk(\delta)$-close to $B_{R_\delta}(p)\subset \R^m$ and $(a_1,\ldots, a_m,b_1,\ldots, b_m)$ is a strainer in  $B_{R_\delta}(p)$ which is $\vk(\delta)$-close to $R_\delta e_1,\ldots,R_\delta  e_m,-R_\delta e_1,\ldots,-R_\delta e_m$ in $\R^m$. Also, $\sec M\le \delta^2$ and $(M,e^{-f}\mathcal H_n)$ satisfies $\Ric_{f,N}\ge -\delta^2$.

 We aim to prove that $\Phi$ is a $\vk(\delta)$-almost Riemannian submersion on $B_1(p)$ which is almost linear on scale 1.

First let us "lift" $\Phi$ to the pseudo-cover at $p$.

Consider the map $\exp=\exp_{p}\co T_pM\to M$. Since $\sec M\le \delta^2$ we have that the conjugate radius of $ M$ is at least $\frac{\pi}{\delta}$. Recall that $R_\delta<\frac{2}{\sqrt \delta}<\frac{\pi}{20\delta}$.

 Pull back the Riemannian metric via  $\exp$ to the ball of $B_{R_\delta}(\cvertex)$ in $T_{p}M$.  We will denote the resulting Riemannian manifold by $\hat M$.  Here $\cvertex$ denotes the origin in $T_{p}M$.
 


 Let $A_j=\exp^{-1}(a_j)\cap B_{R_\delta+1}(\cvertex)$ and $B_j=\exp^{-1}(b_j)\cap B_{R_\delta+1}(\cvertex)$ for $j=1,\ldots, m$. 
  Let $\hat\Phi\co  \hat M \to\R^m$ be given by $\hat\Phi(x)=(d(x,A_1),\ldots, d(x,A_m))$.

Recall that $d(a_{j_1},a_{j_2})= \sqrt{2}R_\delta\pm\vk(\delta)$,  $d(b_{j_1},b_{j_2})= \sqrt{2}R_\delta\pm\vk(\delta)$, $d(a_{j_1},b_{j_2})= \sqrt{2}R_\delta\pm\vk(\delta)$  for $j_1\ne j_2$ and all large $i$. Also  $d(a_{j},b_{j})= 2R_\delta\pm\vk(\delta)$, $d(a_{j},p)= R_\delta\pm\vk(\delta)$ and $d(b_{j},p)= R_\delta\pm\vk(\delta)$ for all $j$. 

The local submetry property of $\exp$ implies that  $d(A_{j_1},A_{j_2})\ge \sqrt{2}R_\delta-\vk(\delta)$,  $d(B_{j_1},B_{j_2})\ge \sqrt{2}R_\delta-\vk(\delta)$, $d(A_{j_1},B_{j_2})\ge \sqrt{2}R_\delta-\vk(\delta)$  for $j_1\ne j_2$. Also  $d(A_{j},B_{j})= 2R_\delta\pm\vk(\delta)$, $d(A_{j},p)= R_\delta\pm\vk(\delta)$ and $d(B_{j},p_i)= R_\delta\pm \vk(\delta)$ for all $j$. Since by Lemma~\ref{lem-bilip-almost-split}  the ball $B_{R_\delta}(\cvertex)$ is $\vk(\delta)$-close to the ball of the same radius  in $\R^n$,  Lemma~\ref{lem:euclid-est} implies that 

\begin{equation}\label{eq:est-sq-root}
\diam A_j\le 10\sqrt{R_\delta}, \diam B_j\le 10\sqrt{R_\delta} \quad \text{ for all } j=1,\ldots, m
\end{equation}

Furthermore,   the following estimates hold
\begin{align}
d(A_{j_1},A_{j_2})=\sqrt{2}R_\delta\pm\vk(\delta), d(B_{j_1},B_{j_2})= \sqrt{2}R_\delta\pm\vk(\delta),\label{eq-str-1} \\ 
d(A_{j_1},B_{j_2})= \sqrt{2}R_\delta\pm\vk(\delta) \text{ for } j_1\ne j_2, j_1, j_2=1,\ldots, m;\label{eq-str-2}\\
d(A_{j},B_{j})= 2R_\delta\pm\vk(\delta), d(A_{j},\cvertex)= R_\delta\pm\vk(\delta)\label{eq-str-3}\\
 \text{ and } d(B_{j},\cvertex)= R_\delta\pm \vk(\delta) \text{ for all} j=1,\ldots m.\label{eq-str-4}
 \end{align}

In other words the sets $A_i, B_i, i=1,\dots, m$ form a strainer of size $R_\delta$.

Next observe that since $\exp$ is a local submetry it  satisfies that $d(x, \exp^{-1}(a_j))=d(\exp(x), a_j)$ for $x\in B_{1}(\cvertex)$.
 Also, by construction,  if $x\in B_{1/10}(\cvertex)$ then $d(x, \exp^{-1}(a_j))=d(x,A_j)$ since for any $y$ in $B_{R_\delta+1}(\cvertex)^c\cap \exp^{-1}(a_j)$ and  $x\in B_{1/10}(\cvertex)$  we have $d(x,y)\ge R_\delta+0.9>d(\cvertex, A_j)+0.1>d(x,A_j)$. Altogether this means that for  $x\in B_{1/10}(\cvertex)$ it holds that $d(\exp(x), a_j)=d(x, \exp^{-1}(a_j))=d(x,A_j)$. The same holds for $b_j$ and $B_j$.

 Therefore we have that 
 \begin{equation}
  \hat \Phi=\Phi\circ \exp \text{ on }B_{1/10}(\cvertex).
 \end{equation}
 Therefore, since $\exp$ is a local isometry to prove that $\Phi$ is an almost linear  almost Riemannian submersion  it's enough to show that this holds for $ \hat \Phi$. But the latter immediately follows from  Lemma~\ref{lem-almost-linear}. Indeed since $A_j, B_j, j=1,\ldots m$ is a partial strainer  and  $B_{R_\delta}(\cvertex)$ is $\vk(\delta)$-close to the  ball of radius $R_\delta$ in $\R^n$ we can complete the strainer $A_j, B_j, j=1,\ldots m$ to a full strainer $A_j, B_j, j=1,\ldots n$ of length $R_\delta$ (we can take $A_j, B_j, j=m+1,\ldots, n$ to be points) where the formulas \eqref{eq-str-1}, \eqref{eq-str-2}, \eqref{eq-str-3}, \eqref{eq-str-4} hold for all $j=1,\ldots, n$. Let us extend the map $\hat\Phi \co \hat M\to \R^m$ by the same formula to $\hat \phi\co \hat M\to \R^n$ by setting $\phi_j(x)=d(x,A_j)$ for $j=1,\ldots n$. By constructions $\hat \phi_j=\hat \Phi_j$ for $j=1,\ldots m$, i.e. $\hat \Phi=pr_m\circ \hat\phi$ where $pr_m\co \R^n\to\R^m$ is the canonical coordinate projection onto the first factor in $\R^n=\R^m\times\R^{n-m}$.

 Therefore by Lemma~\ref{lem-almost-linear}  $\hat \Phi|_{B_{1/10}(\cvertex)}$ (and hence $ \Phi|_{B_{1/10}(p)}$) is a $\vk(\delta)$-almost Riemannian submersion which is almost $\vk(\delta)$-linear on scale $1/10$.
 This verifies parts \eqref{alm-reim-1} and \eqref{alm-linear-1} of the Local Fibration theorem.
 
Part \eqref{bilip-intrinsic} follows from Corollary~\ref{cor-almost-riem-subm}.

Lastly, note that the maps $\Phi_i$ need not be smooth but  we can easily smooth them out preserving their properties. For example we can change the functions $d(\cdot, a_j^i)$ in the definition of $\Phi_i$ to the functions of the form $x\mapsto \int_{B_\nu(a_j^i)}d(x,y)d\vol_y$ where $B_\nu(a_j^i)$ is a tiny ball around $a_j^i$.  This approximates $\Phi_i$ by a  $C^1$ map which satisfies all the properties of $\Phi_i$ claimed by the Local Fibration Theorem.

\end{proof}

 
 \begin{remark}\label{non-complete-loc-fibra}
 In the above proof we only deal with balls of bounded size in $M_i$ and $B$. Therefore the assumption that $M_i$ be complete can be weakened to require that the closed balls $\bar B_1(p_i)$ be compact.
 \end{remark}
 
 By rescaling the Local Fibration theorem easily yields
\begin{corollary}\label{cor-large-alm-linear}
 Let $( M_i^n,g_i,f_i, p_i)$ satisfy  $\sec \le \delta_i$  and  $\Ric_{f_i,N}\ge -\delta_i$ with $\delta_i\to 0$.
 
Suppose  $( M_i^n, p_i)\to (\R^m, 0)$ in  pointed Gromov\textendash Hausdorff topology. Then there exist smooth maps $\pi_i\co (M_i, p_i)\to (\R^m,0)$ such that the following holds.

Fix  $\eps, R>0 $. Let $h_i\co B_R(p_i)\to B_R(0)$ be $\mu_i$-GH approximations with $\mu_i\to 0$. Then there exist $\eps_i(R)\to 0$ as $i\to \infty$ such that the restrictions of $\pi_i$ to $B_R(p_i)$ satisfy
\begin{enumerate}
\item $\pi_i|_{B_R(p_i)}$ is $\eps_i$-almost Riemannian submersion 
\item  $\pi_i|_{B_R(p_i)}$ is $\eps_i$-close to the original GH-approximation $h_i$;
 \item  $\pi_i|_{B_R(p_i)}$ is $\eps$-linear on scale $R$.
 \item\label{intr-bilip} The intrinsic metric on the fiber  $F_i=\pi_i^{-1}(0)$  is $(1+\eps_i)$-Bilipschitz to the extrinsic metric.

 \end{enumerate}
\end{corollary}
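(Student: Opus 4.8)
The plan is to rerun the construction from the proof of Proposition~\ref{local fibr thm}, now with $\R^m$ itself playing the role of the limit there, and to note that the hypothesis $\delta_i\to 0$ (rather than a fixed upper sectional bound) is exactly what forces all the error terms to vanish. First I would fix a constant $C_N$ depending only on $N$ and set $s_i:=C_N\sqrt{\delta_i}\to 0$, so that $\sec M_i\le s_i^2$ and $\Ric_{f_i,N}\ge -(N-1)s_i^2$. Passing to the pseudo-cover $\hat M_i$ of $M_i$ based at $p_i$ (with the lifted weight $f_i\circ\exp$), the map $\exp\colon\hat M_i\to M_i$ is a submetry and a local isometry on $B_{\Rk{s_i^2}}(\cvertex)$ with $\Rk{s_i^2}=\tfrac{\pi}{2s_i}\to\infty$, and $\hat M_i$ satisfies $\sec\le s_i^2$, $\Ric_{f_i\circ\exp,N}\ge -(N-1)s_i^2$, $\injrad(\cvertex)\ge 1/s_i$, with $\bar B_{1/s_i}(\cvertex)$ compact. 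Applying Lemma~\ref{lem-bilip-almost-split} to $\hat M_i$ with $\eps=s_i$ then produces $\nu(s_i)=\vk(s_i|N)\to 0$ such that, for every fixed $r$ and all $i$ large, $B_r(\cvertex)\subset\hat M_i$ is $\nu(s_i)$-close to $B_r(0)\subset\R^n$ and the angle-comparison property of that lemma holds on $B_r(\cvertex)$.

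Next I would pick a scale $\rho_i\to\infty$ with $\rho_i\le\min\{\tilde R(s_i),\ \tfrac1{20}\Rk{s_i^2},\ r_i\}$, where $r_i\to\infty$ is a radius on which $(M_i,p_i)\to(\R^m,0)$ yields GH-approximations with error tending to $0$; we extend the prescribed $h_i$ by these where needed. Lifting the coordinate points $\pm\rho_i e_j\in\R^m$ through such a GH-approximation gives points $a^i_j,b^i_j\in B_{\rho_i}(p_i)\subset M_i$, and lifting further through $\exp$ gives sets $A^i_j,B^i_j\subset\hat M_i$ as in the proof of Proposition~\ref{local fibr thm}, which form a partial $m$-strainer of size $\rho_i$; one completes it to a full $n$-strainer of size $\rho_i$ (the extra sets being points) using that $B_{\rho_i}(\cvertex)$ is $\nu(s_i)$-Euclidean. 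Put $\hat\Phi_i(x)=(d(x,A^i_1),\dots,d(x,A^i_m))$ on $\hat M_i$ and $\Phi_i(x)=(d_{M_i}(x,a^i_1),\dots,d_{M_i}(x,a^i_m))$ on $M_i$; by the local submetry property of $\exp$ one has $\hat\Phi_i=\Phi_i\circ\exp$ on $B_R(\cvertex)$ for any fixed $R$ once $i$ is large, so all infinitesimal properties of $\Phi_i$ on $B_R(p_i)$ may be read off from those of $\hat\Phi_i$ on $B_R(\cvertex)$. Lemma~\ref{lem-almost-linear} and Corollary~\ref{cor-almost-riem-subm}, applied to the full strainer map $\hat\phi_i\colon\hat M_i\to\R^n$ and its coordinate projection $\hat\Phi_i=\mathrm{pr}_m\circ\hat\phi_i$, then show that $\hat\Phi_i$ — hence $\Phi_i$ on $B_R(p_i)$ — is a $\vk(s_i|R)$-almost Riemannian submersion, is $\vk(s_i|R)$-linear on scale $R$, and has fibers $\vk(s_i|R)$-Lipschitz–Hausdorff close to balls in $\R^{n-m}$. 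Finally I set $\pi_i:=\Phi_\infty^{-1}\circ\Phi_i$, where $\Phi_\infty(y)=(|y-\rho_ie_1|,\dots,|y-\rho_ie_m|)$ on $\R^m$ — defined on $B_{\rho_i/10}(p_i)$, which contains $B_R(p_i)$ for $i$ large, and extended arbitrarily smoothly to all of $M_i$ — and smooth $\pi_i$ by replacing each $d(\cdot,a^i_j)$ with its convolution against a tiny bump, as at the end of the proof of Proposition~\ref{local fibr thm}.

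It then remains to collect the errors. Up to a fixed isometry of $\R^m$, $\Phi_\infty$ is the approximate Busemann map at scale $\rho_i$, hence $\vk(1/\rho_i)$-close in $C^3$ to an isometry on $B_R(0)$; by Remark~\ref{rem: loc-linear}, composing with $\Phi_\infty^{-1}$ multiplies the almost-submersion and almost-linearity constants by $1+\vk(1/\rho_i)\to 1$. Thus $\pi_i|_{B_R(p_i)}$ is a $\vk(s_i|R)$-almost Riemannian submersion, is $\vk(s_i|R)$-linear on scale $R$ (in particular $\eps$-linear once $i$ is large), and the intrinsic metric on $\pi_i^{-1}(0)$ is $(1+\vk(s_i|R))$-bilipschitz to the extrinsic one. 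As in the proof of Proposition~\ref{local fibr thm}, the fact that the $a^i_j$ are GH-lifts of the coordinate strainer of $\R^m$ makes $\pi_i$ close to that GH-approximation on $B_R(p_i)$ with error tending to $0$; post-composing with an isometry of $\R^m$ — which preserves all the properties above — then makes $\pi_i$ be $\vk(\mu_i')$-close to the prescribed $h_i$ for some $\mu_i'\to 0$. Taking $\eps_i(R)$ to be the maximum of $\vk(s_i|R)$ and $\mu_i'$ gives $\eps_i(R)\to 0$ for each fixed $R$ and establishes all four conclusions.

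I expect the main difficulty to be bookkeeping rather than conceptual. One must juggle two radii both tending to infinity — the radius $\tilde R(s_i)$ out to which the pseudo-cover $\hat M_i$ is Euclidean, and the radius $r_i$ out to which $M_i$ is GH-close to $\R^m$ — and choose the strainer scale $\rho_i$ beneath both while still having $\rho_i\to\infty$, so that the identification $\Phi_\infty\approx\mathrm{id}$ genuinely improves with $i$; and one must check that every estimate borrowed from Lemma~\ref{lem-almost-linear} and Corollary~\ref{cor-almost-riem-subm}, originally phrased for a fixed curvature scale, degrades only by the multiplicative factor $1+\vk(1/\rho_i)$, so that — precisely because $\delta_i\to 0$ — the resulting constants vanish. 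The one genuine subtlety, the rotational ambiguity of GH-approximations of $\R^m$, is harmless and is absorbed into the post-composition with an isometry mentioned above.
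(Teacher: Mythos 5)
Your proof is correct and follows the same route the paper intends: you rerun the strainer/pseudo-cover construction from the proof of Proposition~\ref{local fibr thm}, using Lemma~\ref{lem-bilip-almost-split}, Lemma~\ref{lem-almost-linear}, Corollary~\ref{cor-almost-riem-subm}, and Remark~\ref{rem: loc-linear}, observing that the hypothesis $\delta_i\to 0$ (together with the limit being exactly $\R^m$) makes all errors vanish rather than merely being $\vk(\delta|N)$ at a fixed small scale $\delta$. This is precisely what the paper's terse ``by rescaling the Local Fibration Theorem easily yields'' amounts to, and your bookkeeping of the two growing radii ($\tilde R(s_i)$ and the GH-closeness radius) and the absorption of the rotational ambiguity are the right details to supply.
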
\label{cor-pseudo-cover}
Note that in the above corollary we are allowing $R$ to be large.

\begin{corollary}\label{cor-pseudocov-2}
Under the assumptions of Corollary~\ref{cor-large-alm-linear} let $\exp\co \hat M_i\to M$ be the pseudo-cover  as defined in Section~\ref{sec:pseudo-covers} and let $\Gamma_i$ be the corresponding pseudo-group. Let $\hat \pi_i=\pi_i\circ \exp$. 
Then for any fixed $R>0$ 
the group $\bar \Gamma_i(R)$ is isomorphic to $\pi_1(F_i)$ for all large $i$.

\end{corollary}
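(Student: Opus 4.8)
The plan is to exhibit $\exp$, restricted to the fiber $\hat F_i:=\hat\pi_i^{-1}(0)$, as a pseudo-cover of $F_i=\pi_i^{-1}(0)$ with pseudo-group $\Gamma_i(R)$, and then to run the arguments of Lemmas~\ref{lem:pseudo-onto} and~\ref{lem:pseudo-iso} with $F_i$ in place of $M$ and $\hat F_i$ in place of $\hat M$. First I would record the geometric input. Since $\sec M_i\le\delta_i\to0$ we have $\Rk{\delta_i}=\frac{\pi}{2\sqrt{\delta_i}}\to\infty$, so for every fixed $R'$ and all large $i$ the pseudo-cover $\hat M_i$ contains $B_{R'}(\cvertex)$. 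As in the proof of the Local Fibration Theorem, $\hat\pi_i=\pi_i\circ\exp$ coincides on a large ball about $\cvertex$ with $\Phi_\infty^{-1}\circ\hat\Phi_i$, where $\Phi_\infty$ is a diffeomorphism of the base which is close to an isometry in $C^3$ and $\hat\Phi_i$ is a partial strainer map on $\hat M_i$; hence by Lemma~\ref{lem-bilip-almost-split} and Corollary~\ref{cor-almost-riem-subm}, for every fixed $R'$ and all large $i$ the set $\hat F_i\cap B_{R'}(\cvertex)$ is connected and $\vk(\delta_i|R')$-Lipschitz--Hausdorff close to $B_{R'}(0)\subset\R^{n-m}$, so by Remark~\ref{rem-pseudo-1} any two curves of length $\le l\le R'$ in $\hat F_i\cap B_{R'}(\cvertex)$ with common endpoints are homotopic rel endpoints in $\hat F_i\cap B_{2R'}(\cvertex)$ through curves of length $\le 2l$. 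Finally, $\pi_i$ is close to a Gromov--Hausdorff approximation on $B_R(p_i)$, so for large $i$ the fiber $F_i$ is connected and $\diam F_i\to0$, and by Corollary~\ref{cor-large-alm-linear}\eqref{intr-bilip} the intrinsic diameter of $F_i$ tends to $0$ as well.

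Next I would set up the pseudo-cover structure. Because the pseudo-action of $\Gamma_i$ commutes with $\exp$ we have $\hat\pi_i\circ\alpha=\pi_i\circ\exp\circ\alpha=\pi_i\circ\exp=\hat\pi_i$, so every $\alpha$ preserves $\hat F_i$; for $\alpha\in\Gamma_i(R)$ and $x\in\hat F_i\cap B_{10R}(\cvertex)$ the point $\alpha(x)$ is defined (since $|\alpha|+d(x,\cvertex)<11R<\Rk{\delta_i}$) and lies in $\hat F_i\cap B_{11R}(\cvertex)$. Conversely, the lift starting at $\cvertex$ of a short loop at $p_i$ representing a class in $\Gamma_i(R)$ is a path inside $\hat F_i$, so $\phi(\Gamma_i(R))\subset\hat F_i\cap B_R(\cvertex)$; using $\diam F_i\ll R$ together with the connectedness of $\hat F_i\cap B_{2R}(\cvertex)$ one checks that $\exp$ maps $\hat F_i\cap B_R(\cvertex)$ onto $F_i$ with fibers exactly the $\Gamma_i(R)$-pseudo-orbits, and that the pseudo-action of $\Gamma_i(R)$ on $\hat F_i\cap B_R(\cvertex)$ is free, isometric and properly discontinuous. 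Thus $F_i$ is the pseudo-quotient of $\hat F_i\cap B_R(\cvertex)$ by $\Gamma_i(R)$.

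It remains to identify $\bar\Gamma_i(R)$ with $\pi_1(F_i)$. I would define $\rho_i\co\Gamma_i(R)\to\pi_1(F_i,p_i)$ by $\rho_i(g)=[\exp\circ\hat\eta_g]$, where $\hat\eta_g$ is any path in $\hat F_i\cap B_{2R}(\cvertex)$ from $\cvertex$ to $\phi(g)$; this is well defined because $\hat F_i\cap B_{2R}(\cvertex)$ is simply connected at the relevant scale (Remark~\ref{rem-pseudo-1}), and it is a homomorphism of pseudo-groups since $\exp\circ g=\exp$ lets one use $\hat\eta_{g_1}\cdot g_1(\hat\eta_{g_2})$ as the path for $g_1\star g_2$. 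By the universal property of $\bar\Gamma_i(R)$ this induces $\bar\rho_i\co\bar\Gamma_i(R)\to\pi_1(F_i)$. Surjectivity holds because $\pi_1(F_i)$ is generated by loops of length at most twice the intrinsic diameter of $F_i$, which is $\ll R$, and the lift of such a loop stays in $\hat F_i\cap B_R(\cvertex)$, so its class is $\rho_i$ of an element of $\Gamma_i(R)$. For injectivity I would repeat the proof of Lemma~\ref{lem:pseudo-iso} verbatim: given $w\in\ker\bar\rho_i$, represent it by a nullhomotopic loop in $F_i$, subdivide a nullhomotopy into rectangles whose sides are all much shorter than the intrinsic diameter of $F_i$, join $p_i$ to the grid points by short curves lying in $F_i$ (possible by Corollary~\ref{cor-large-alm-linear}\eqref{intr-bilip}), observe that all the resulting short classes lie in $\Gamma_i(R)$ and all the relations among them are $R$-short, and carry out the same diagram chase to conclude $w=1$ in $\bar\Gamma_i(R)$.

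The one genuine difficulty I anticipate is the bookkeeping of scales: one must fix a hierarchy in which the intrinsic diameter of $F_i$ is much smaller than $R$, which in turn is much smaller than an auxiliary radius $R'$, which is much smaller than $\Rk{\delta_i}$, and then check at every step that the relevant pseudo-actions and Gromov products are defined and that all loops and homotopies stay inside balls on which $\hat F_i$ is controlled by Corollary~\ref{cor-almost-riem-subm}. Once this is organized, the topological content is exactly that of Lemmas~\ref{lem:pseudo-onto} and~\ref{lem:pseudo-iso}.
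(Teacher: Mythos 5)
Your proposal is correct and follows essentially the same strategy as the paper's proof: identify $\hat F_i=\hat\pi_i^{-1}(0)\cap B_{2R}(\cvertex)$ as a $\Gamma_i(R)$-invariant, Lipschitz--Hausdorff-Euclidean subset (via Corollary~\ref{cor-almost-riem-subm}), use it to define a pseudo-group homomorphism $\Gamma_i(R)\to\pi_1(F_i,p_i)$, and then transplant the arguments of Lemmas~\ref{lem:pseudo-onto} and~\ref{lem:pseudo-iso} to $F_i$ in place of $M$, using $\diam F_i\ll R$. The only cosmetic difference is that you define the homomorphism via an arbitrary path in $\hat F_i$ from $\cvertex$ to $\phi(g)$ and prove well-definedness by Remark~\ref{rem-pseudo-1}, whereas the paper ``pushes'' the shortest geodesic lift of $\gamma$ into $\hat F_i$; these are equivalent formulations of the same map.
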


\begin{proof}
By the construction of the map $\pi_i$ in the proof of the Local Fibration Theorem, the map $\hat \pi_i$ is equal to the first $m$ components of a full strainer map $\Phi^i\co\hat M_i\to \R^n$ which when restricted to $B_R(\cvertex)$ gives an $\eps_i(R)$-Lipschitz-Hausdorff approximation to a ball of the same radius in $\R^n$ where $\eps_i\to 0$ as $i\to\infty$.
Note that for all large $i$ we have that $R<\Rk{\delta}/20$. Therefore we can talk about the pseudo-group $\Gamma_i(R)$.

By construction the action of $\Gamma_i(R)$ leaves $\hat F_i=\hat \pi_i^{-1}(0)\cap B_{2R}(\cvertex)=\exp^{-1}(F_i)\cap B_{2R}(\cvertex)$ invariant. Let $[\gamma]\in\Gamma_i(R)$ be a short homotopy class of a geodesic loop $\gamma$ at $p$ of length $\le R$. Let $\tilde \gamma$ be its lift starting at $\cvertex$. The (shortest) geodesic $\tilde\gamma$ has endpoints in $\hat F_i$ but is not necessarily contained in $\hat F_i$. However, since $\Phi^i$ is a Lipschitz\textendash Hausdorff approximation from $B_{2R}(\cvertex)$ to $B_{2R}(0)\subset \R^n$ we can obviously "push" the curve $\tilde \gamma$ into $\hat F_i$ via a homotopy rel endpoints to a $\eps_i$-close curve $\hat\gamma$ with length $\le (1+10\eps_i)L(\gamma)$. Projecting down to $M$ gives a loop in $F_i$ based at $p$. Moreover a short homotopy of $\tilde \gamma$ rel end points can be pushed into $\hat F_i$ as well with the same control on the length of curves in the homotopy as above.

This shows that the map $\gamma\mapsto \exp(\hat \gamma)$  induces a well defined map $\phi_i\co \Gamma_i(R)\to \pi_1(F_i,p)$ which is obviously a homomorphism.

We claim that this map is an isomorphism for all large $i$. Taking into account that  $\diam F_i\ll R$ for large $i$ this follows by straightforward modifications of the proofs of Lemma~\ref{lem:pseudo-onto} and Lemma~\ref{lem:pseudo-iso}.

\end{proof}
Next we show that if under the assumptions of the local fibration theorem we have two different local approximating fibrations satisfying the conclusion of the theorem then they must be $C^1$ close and not just $C^0$ close.
This will allow us to glue different local fibrations into one global fibration in the proof of the global Fibration Theorem.

\begin{lemma}\label{local-fibr-C1-close}
Under the assumptions of the Local Fibration theorem \ref{local fibr thm} suppose  that for a small $\delta$ and all large $i$ we have two different maps
$\pi_i, \pi_i'\co B_\delta(p_i)\to B$ such that
\begin{enumerate}
\item  Both $\pi_i$ and $ \pi_i'$ are $\vk(\delta|N)$-almost Riemannian submersions;
\item Both $\pi_i$ and $ \pi_i'$ are $\vk(\delta|N)$-linear on scale $\delta$,
\item $\pi_i$ and $ \pi_i'$ are uniformly $\mu_i$-close with $\mu_i\to 0$ as $i\to\infty$.
\end{enumerate}

Then $p_i$ and  $p_i'$ are  $\vk(\delta|N)$-close in $C^1$ for large $i$ which more precisely means the following. Let $\Phi\co B_{2\delta}(p_\infty)\to \R^m$ be a strainer map 
for the strainer $(a_1=\exp_q(re_1),\ldots a_m=\exp_q(re_m)$ where $e_1,\ldots e_m$ is an orthonormal basis in $T_qB$ and $r>0$ is a  fixed constant $\ll \injrad(q)$.

Then
for any $x\in B_1(p_1)$ the maps $d_x(\Phi\circ \pi_i), d_x(\Phi\circ \pi_i')\co T_xM_i\to\R^m$ are $\vk(\delta)$-close for all large $i$.
\end{lemma}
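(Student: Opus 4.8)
The plan is to reduce the statement to a comparison of two $\R^m$-valued maps and then run a one-step finite-difference estimate: two maps that are uniformly $C^0$-close and both almost linear on the same scale must have almost equal directional derivatives on that scale.

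First I would replace $\pi_i,\pi_i'$ by the Euclidean-valued maps $f_i=\Phi\circ\pi_i$ and $f_i'=\Phi\circ\pi_i'$ defined on $B_\delta(p_i)$ (they land in $B_{2\delta}(p_\infty)$, where $\Phi$ is defined, because they are $\vk(\eps_i)$-close to the GH-approximation $h_i$). Since $\Phi$ is a strainer map near $p_\infty$ for a fixed strainer of size $r\ll\injrad(p_\infty)$, for $\delta$ small its restriction to $B_{2\delta}(p_\infty)$ is $\vk(\delta)$-close in $C^3$ to an isometry onto a ball in $\R^m$; in particular $\Phi$ is $2$-Lipschitz there and has a well-defined $1$-homogeneous directional derivative at every point. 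Then $d_xf_i=d_{\pi_i(x)}\Phi\circ d_x\pi_i$ and $d_xf_i'=d_{\pi_i'(x)}\Phi\circ d_x\pi_i'$, and because post-composition with the fixed near-isometry $\Phi$ and with its derivative distorts distances by a factor $1+\vk(\delta)$, proving $d_xf_i$ and $d_xf_i'$ are $\vk(\delta)$-close is equivalent to the asserted $C^1$-closeness of $\Phi\circ\pi_i$ and $\Phi\circ\pi_i'$. By Remark~\ref{rem: loc-linear} (using that $\pi_i,\pi_i'$ are $2$-Lipschitz as almost Riemannian submersions and $\vk(\delta|N)$-linear on scale $\delta$), the maps $f_i,f_i'$ are themselves $\vk(\delta|N)$-linear on scale $\delta$ as maps into $\R^m$, in the sense of the discussion following Definition~\ref{def:almost-linear}. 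Finally, since $|\pi_i-\pi_i'|\le\mu_i$ uniformly and $\Phi$ is $2$-Lipschitz, I get $|f_i-f_i'|\le2\mu_i$ uniformly on $B_\delta(p_i)$.

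Next I would fix a point $x$ in the relevant ball and a unit vector $v\in T_xM_i$, set $\gamma(t)=\exp_x(tv)$ so that the segment $\gamma|_{[0,\delta/2]}$ stays in the common domain of $f_i$ and $f_i'$, and apply $\vk(\delta|N)$-linearity on scale $\delta$ to $f_i$ and to $f_i'$ at the point $x$ with $t=\delta/2$. This gives
\[
d_xf_i(v)=\tfrac{2}{\delta}\bigl(f_i(\gamma(\delta/2))-f_i(x)\bigr)\pm\vk(\delta|N),\qquad d_xf_i'(v)=\tfrac{2}{\delta}\bigl(f_i'(\gamma(\delta/2))-f_i'(x)\bigr)\pm\vk(\delta|N).
\]
Subtracting and inserting $|f_i-f_i'|\le2\mu_i$ at the two points $\gamma(\delta/2)$ and $x$ yields $|d_xf_i(v)-d_xf_i'(v)|\le 8\mu_i/\delta+\vk(\delta|N)$, uniformly in $x$ and in the unit vector $v$. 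Since $\delta$ is a fixed small number and $\mu_i\to0$, for all large $i$ the first term is itself $\le\vk(\delta|N)$, and the lemma follows.

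I do not expect a genuine obstacle here; the argument is essentially soft, being a finite-difference approximation of the derivative on the scale at which almost-linearity is available. The only two points needing care are bookkeeping: the uniform $C^0$-error $\mu_i$ enters the difference quotient divided by the fixed scale $\delta$, so it is absorbed into $\vk(\delta|N)$ only because the limit $i\to\infty$ is taken after $\delta$ has been fixed — exactly the order of quantifiers in the statement; and one must work on a ball slightly smaller than the full domain of $\pi_i,\pi_i'$ (and with time $\delta/2$ rather than $\delta$) so that the geodesic segments used in the difference quotient remain in that common domain.
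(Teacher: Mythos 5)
Your proposal is correct, and in fact slightly more direct than the argument in the paper. Both proofs use the same core estimate: after composing with the strainer map $\Phi$, one applies $\vk(\delta|N)$-linearity on scale $\delta$ to express the directional derivative as a difference quotient $\frac{2}{\delta}\bigl(f(\gamma(\delta/2))-f(x)\bigr)$ up to a $\vk(\delta|N)$ error, and then inserts the $C^0$-bound $|f_i-f_i'|\le 2\mu_i$ to get $|d_xf_i(v)-d_xf_i'(v)|\le \vk(\delta|N)+C\mu_i/\delta$. The difference is in where this estimate is applied. The paper first spends effort characterizing $\vk(\delta)$-horizontal vectors for $\hat\pi_i$ as initial velocities of almost-minimizing geodesics on scale $\delta$, concludes that $\hat\pi_i$ and $\hat\pi_i'$ have $\vk(\delta)$-close horizontal and vertical subspaces, and then runs the difference-quotient estimate only on (approximately common) horizontal vectors, remarking that this suffices because both differentials are $\vk(\delta)$-close to orthogonal projections. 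You instead observe, correctly, that the difference-quotient estimate is valid for \emph{every} unit $v\in T_xM_i$ — the $\vk(\delta|N)$-linearity hypothesis is stated for all $v$ with $|v|\le 1$, not just horizontal ones — so the preliminary analysis of horizontal and vertical subspaces is superfluous and the estimate on all of the unit ball of $T_xM_i$ follows in one step. This is a genuine simplification: it shortens the argument and avoids the (not strictly needed) characterization of horizontal vectors via almost-minimizing geodesics. The one bookkeeping point you correctly flagged — working on a slightly smaller ball and at scale $\delta/2$ so that the comparison geodesics stay inside the common domain of $\pi_i,\pi_i'$, and taking $i$ large after $\delta$ is fixed so that $\mu_i/\delta$ is absorbed into $\vk(\delta|N)$ — is exactly the same quantifier order the paper uses.
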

\begin{proof}
 
Let $\hat\pi_i=\Phi\circ \pi_i$ and $\hat\pi_i'=\Phi\circ \pi_i'$.
Let $x\in B_\delta(p_i)$ and let $q=\hat\pi_i'(x)$. Let $w$ be any  unit vector in $\R^m$. Let $v\in T_xM_i$ be a horizontal lift of $w$ with respect to  $\hat\pi_i$ so that $d_x\hat\pi_i(v)=w$. Then $|v|=1\pm\vk(\delta)$.
 Let $\gamma\co [0,\delta]\to M_i$ be a geodesic  with $v=\gamma'(0)$. 
 Then by almost linearity we have that $\delta=d(q,q+\delta w)=d(p,\gamma(\delta))\pm\vk(\delta)\cdot\delta$. Therefore the geodesic $\gamma$ is almost minimizing on scale $\delta$ in the sense that $\delta(1\pm\vk(\delta)) =L(\gamma)=d(\gamma(0), \gamma(\delta))\pm\delta \vk(\delta)$. In other words 
 
 \[
 1 \le \frac{L(\gamma)}{d(\gamma(0),\gamma(\delta))}\le 1+\vk(\delta)
 \]
 
 Conversely, the same argument shows that if $\gamma$ is a constant speed geodesic that satisfies the last property then $v=\gamma'(0)$ is almost horizontal for $\pi_i$. 
 
 Lifting $m$ orthonormal vectors $w_1,\ldots, w_m$ horizontally we see for dimension reasons that a vector in $T_xM_i$  is $\vk(\delta))$-horizontal for $\hat\pi_i$ iff it's almost minimizing on scale $\delta$.
The same argument applies to $\hat \pi_i'$. Therefore $\hat\pi_i$ and $\hat \pi_i'$ have $\vk(\delta)$-close vertical and horizontal spaces.

Thus, to prove that $d_x \hat \pi_i$ and $d_x \hat \pi_i'$ are close it's enough to check that they are close on horizontal spaces.
 
 Let $v\in T_xM_i$ be a unit $\vk(\delta)$-horizontal( for both $\pi_i$ and $\pi_i'$)  vector  so that for $\gamma(t)=\exp_x(tv)$ we have that $d(\gamma(\delta),x)=\delta(1\pm\vk (\delta))$.
 Then by almost linearity $\hat \pi_i(\gamma(\delta))$ is $\vk(\delta)\cdot\delta$-close to $q+\delta d_x\hat\pi_i(v)$ where as before $q=\hat\pi_i(x)$.
 
 But the same argument works for $\hat \pi_i'$. And  since $\hat \pi'_i$ is $\nu_i$-close to $\hat \pi_i$ we get that $q'=\hat \pi_i'(x)$ is  $\nu_i$-close to $q$ and  $\hat \pi_i'(\gamma(1))$ is $\nu_i$-close to $\hat\pi_i(\gamma(1))$ by the triangle inequality we get that $|d_x\hat\pi_i(v)-d_x\hat\pi_i'(v)|\le \vk(\delta)+4\frac{\mu_i}{\delta}\le  \vk(\delta)$ for large $i$.
 
 This proves that $d_x \hat \pi_i$ and $d_x \hat \pi_i'$  ore $\vk(\delta)$-close on the horizontal spaces and hence on the whole unit ball in $T_xM_i$. This finishes the proof of the Lemma.

\end{proof}

\begin{proof}[Proof of the Fibration Theorem ( Theorem \ref{pointed fibration-thm})]


  With the  Local Fibration Theorem and Lemma~\ref{local-fibr-C1-close} at our disposal the full Fibration Theorem follows by a standard gluing argument similar to the one in proof of the fibration theorem in ~\cite{CFG}.

  Fix an $R>0$ and look at a maximal $\frac \delta{300}$-separated net $p_{\infty,1}, \dots,  p_{\infty,l}$ in $B_{2R}(p_\infty)$ then the balls $\{B_j=B_{\frac\delta{100}}(p_{\infty,j})\}_{j=1,\ldots,l}$ cover  $B_{2R}(p_\infty)$. By Bishop-Gromov volume comparison this cover has intersection multiplicity $\le c(n)$ for all sufficiently small $\delta$. Look at the corresponding lifted balls  $B_j^i=B_{\frac\delta{100}}(p_{i,j})$ covering $B_{2R}(p_i)$. For each j we have constructed an $\vk(\delta)$-almost Riemannian submersion $\pi_{i,j}\co B_j^i\to B$ which is $\nu_i$-close to the original $GH$-approximation $h_i$  where  $\eps_i\le\nu_i\to 0$.
  Furthermore, by Lemma~\ref{local-fibr-C1-close} the maps $\pi_{i,j}$ are  $\vk(\delta)$-$C^1$-close to each other on overlaps for large $i$.
 
   Then we can glue them into a global map using one of the standard center of mass constructions.
  
  For example, let $\{\rho_i^j\}_{j=1,\ldots l}$ be a smooth partition of unity subordinate to the cover $\{ B_j^i\}_{j=1,\ldots l}$ such that all the $\rho_i^j$ are $C(n)/\delta$-Lipschitz (such a partition of unity is easy to construct).
  Then consider the map $\pi_i\co B_{2R}(p_i)\to N$ defined by $x\mapsto \argmin \sum_j\rho_j(x)d^2(\cdot ,\pi_{i,j}(x))$. Note that if $B=\R^m$ then $ \argmin \sum_j\rho_j(x)d^2(\cdot ,\pi_{i,j}(x))=\sum_j\rho_j(x)\pi_{i,j}(x)$. Since $\bar B_{3R}(p_\infty)$ is compact, for all small $\delta$ the balls $\frac 1 \delta B_{\delta}(p_{j,\infty})$ are uniformly $\vk(\delta)$-$C^4$-close to the unit ball in $\R^m$. Therefore, locally  in some strainer coordinates  we can view all the maps on the balls $B_j^i$ contained in a $\delta/10$-neighborhood of $p_j^i$ as maps into $\R^m$. Moreover, in these coordinates the map $h$ is $\vk(\delta)$-$C^4$-close to the map
   $\pi_{i,\delta}\co M_i\to \R^m$ given by the formula $x\mapsto \sum_j\rho_j(x)d^2(\cdot ,\pi_{i,j}(x))$. By an elementary product rule calculation the Lipschitz bounds on $\rho_i$ and $C^1$-closeness on $\pi_{i,j}$'s on $B_{\delta}(p_j^i)$ imply that  $\pi_i$ is $10\nu_i$ uniformly close to $h_i$ and $\vk(\delta)$-$C^1$-close to each $\pi_{i,j}$ (where both maps are defined).  In particular $\pi_i$ is a $\vk(\delta)$-almost Riemannian submersion for all large $i$.
  
 By a diagonal argument with $\delta_i\to 0$ very slowly this immediately yields part ~\eqref{fibr-thm-gh-approx} and part ~\eqref{fibr-thm-almost-rieman}  of the Fibration Theorem.
 The proof of the part \eqref{fibr-thm-intrinsic-fibers}  is exactly the same as the proof of the corresponding part of the Local Fibration Theorem. We can lift the maps $\pi_i$ to a pseudo-cover $\exp_{p_{i,j}}\hat M_i\to M_i$ to 
 $\hat \pi_i=\pi_i\circ \exp_{p_{i,j}}$. The lifted map $\hat \pi_i$ is $C^1$-close to the local fibration map $\hat \pi_{i,j}$ which by the proof of the Local Fibration Theorem can be complemented to a $(1+\vk(\delta_i))$-Bilipschitz map to $\R^n$ on a ball of size $\gg \eps_i$. Complementing the map  $\hat \pi_i$ by the same functions we still get a  $(1+\vk(\delta_i))$-Bilipschitz map to $\R^n$ which yields that the intrinsic metrics on the fibers of  $\hat \pi_i$ are  $(1+\vk(\delta_i))$-Bilipschitz to the extrinsic ones. This property obviously descends down into $M_i$ which yields part \eqref{fibr-thm-intrinsic-fibers} and finishes the proof of the Fibration Theorem.

  \end{proof}
  \begin{remark}
  In the proof of the Main Theorem we will only apply the Fibration Theorem in the situation where the base $N$ is flat and  isometric to $T^k\times \R^{m-k}$  and hence  locally isometric to $\R^m$. In this case in the above proof we can define
  $\Pi$ directly by the formula $\Pi(x)=\sum_j\rho_j(x)\pi_{i,j}(x)$ using any local isometry to $\R^m$. This formula makes sense since taking  center of mass in $\R^m$ commutes with isometries.
  \end{remark}
     
    \begin{remark}\label{non-complete-global-fibra}
Just as in the case of the Local Fibration Theorem (see remark ~\ref{non-complete-loc-fibra})  in the statement of the Fibration Theorem we can weaken the assumption of completness of $M_i$ to the assumption that for some $R_i\to\infty$ the closed balls $\bar B_{R_i}(p_i)$ are compact.
    \end{remark}
    The construction of the fibrations in the Fibration Theorem involves some flexibility. However, the homotopy type of the fibers is well defined as the following general Lemma shows.

    \begin{lemma}\label{lem-hom-fiber}
Let $\pi_i, \pi_i'\co  (M_i^n, p_i)\to (B^m, p)$ be a sequence of $\eps_i$-almost Riemannian submersions with diameters of fibers $\le \eps_i$ where $\eps_i\to 0$ as $i\to\infty$.  Then for all large $i$ the fibers $F_i=\pi_i^{-1}(p)$ and  $F_i'={\pi_i'}^{-1}(p)$ are homotopy equivalent via a homotopy equivalence $f_i\co F_i\to F_i'$ which homotopy commutes with the inclusions $j\co F_i'\hookrightarrow M_i$ and $j'\co F_i \hookrightarrow M_i$, i.e $j\sim j'\circ f_i$. Furthermore $F_i$ (and hence $F_i'$) are connected for all large $i$.


    \end{lemma}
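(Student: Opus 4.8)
The plan is to exhibit both $F_i$ and $F_i'$ as strong deformation retracts of explicit ``trivializing'' neighbourhoods built from horizontal lifts, and then to compare those retracts. First I would fix, once and for all, a radius $\rho>0$ \emph{independent of $i$} small enough that $\bar B_\rho(p)\subset B$ is a normal ball in which the minimizing geodesic to $p$ from any point is unique and depends smoothly on that point (so in particular $\bar B_\rho(p)$ is contractible). Since $\pi_i(p_i)=\pi_i'(p_i)=p$ we have $p_i\in F_i\cap F_i'$, and the hypothesis $\diam F_i,\diam F_i'\le\eps_i$ gives $F_i\cup F_i'\subset\bar B_{\eps_i}(p_i)$. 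Because every $\eps_i$-almost Riemannian submersion is $(1+\eps_i)$-co-Lipschitz, the open sets $U_i:=\pi_i^{-1}(B_\rho(p))$ and $U_i':=(\pi_i')^{-1}(B_\rho(p))$ both contain $B_{\rho/2}(p_i)$; hence for all large $i$ one has $F_i\subset U_i'$ and $F_i'\subset U_i$, and in fact both $U_i$ and $U_i'$ are contained in a ball of radius $\approx\rho$ about $p_i$.

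Next I would construct the retractions. For $x\in U_i$ let $\sigma_x\colon[0,1]\to B_\rho(p)$ be the minimizing geodesic from $\pi_i(x)$ to $p$, let $\tilde\sigma_x$ be its horizontal lift (for the smooth submersion $\pi_i$) starting at $x$ -- which exists since $M_i$ is complete, or has compact balls of size $\approx\rho$ as in Remark~\ref{non-complete-global-fibra} -- and put $H_i(x,t):=\tilde\sigma_x(t)$, $r_i(x):=\tilde\sigma_x(1)$. Continuity of $H_i$ in $(x,t)$ comes from smooth dependence of minimizing geodesics on their endpoints together with continuous dependence of horizontal lifts on the lifted curve; $\sigma_x$ never leaves $B_\rho(p)$, so $H_i$ takes values in $U_i$; and for $x\in F_i$ the curve $\sigma_x$ is constant, so $H_i(x,\cdot)\equiv x$. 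Thus $H_i$ is a strong deformation retraction of $U_i$ onto $F_i$ with retraction $r_i$, and $r_i|_{F_i}=\mathrm{id}$. I would define $H_i'$ and $r_i'$ in the same way for $\pi_i'$.

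Now set $f_i:=r_i'|_{F_i}\colon F_i\to F_i'$ and $g_i:=r_i|_{F_i'}\colon F_i'\to F_i$ (defined for large $i$ by the first paragraph). To see $g_i\circ f_i\simeq\mathrm{id}_{F_i}$: for $x\in F_i$, concatenate the path $t\mapsto H_i'(x,t)$ (from $x$ to $f_i(x)$) with the path $t\mapsto H_i(f_i(x),t)$ (from $f_i(x)$ to $g_i(f_i(x))$); since $\pi_i,\pi_i'$ are $(1+\eps_i)$-Lipschitz and $x,f_i(x)\in\bar B_{\eps_i}(p_i)$, the geodesics occurring have length $\le(1+\eps_i)\eps_i$, so the concatenated path stays in $\bar B_{5\eps_i}(p_i)\subset U_i$ once $\eps_i<\rho/10$. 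Letting $x$ vary yields a homotopy $K\colon F_i\times[0,1]\to U_i$ with $K(\cdot,0)$ the inclusion $F_i\hookrightarrow U_i$ and $K(\cdot,1)=g_i\circ f_i$ viewed as a map into $U_i$; applying $r_i$, which restricts to the identity on $F_i$, gives $\mathrm{id}_{F_i}=r_i\circ K(\cdot,0)\simeq r_i\circ K(\cdot,1)=g_i\circ f_i$. Symmetrically $f_i\circ g_i\simeq\mathrm{id}_{F_i'}$, so $f_i$ is a homotopy equivalence; and the restriction of $H_i'$ to $F_i\times[0,1]$ is a homotopy inside $M_i$ from the inclusion $j'\colon F_i\hookrightarrow M_i$ to $j\circ f_i$, which is the asserted homotopy-commutativity with the inclusions.

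Finally, connectedness of $F_i$ for large $i$: given $x,y\in F_i$, take a minimizing geodesic $\gamma$ from $x$ to $y$ in $M_i$ (its length is $d(x,y)\le\eps_i$), so $\pi_i\circ\gamma$ is a loop at $p$ of length $\le(1+\eps_i)\eps_i<\rho$, hence nullhomotopic rel endpoints inside the contractible ball $B_\rho(p)$. Let $h_s$, $s\in[0,1]$, be such a nullhomotopy ($h_0=\pi_i\circ\gamma$, $h_1\equiv p$, $h_s(0)=h_s(1)=p$) and let $\tilde h_s$ be its horizontal lift starting at $x$. Then $\pi_i(\tilde h_s(1))=h_s(1)=p$, so $s\mapsto\tilde h_s(1)$ is a continuous path in $F_i$; since $\tilde h_0=\gamma$ and $\tilde h_1\equiv x$ it runs from $y$ to $x$. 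Hence $F_i$, and likewise $F_i'$, is path-connected. The step I expect to be the only real obstacle is the bookkeeping of the two competing scales -- a fixed small $\rho$ against $\eps_i\to0$ -- needed to guarantee that all horizontal lifts and all the induced homotopies stay inside the neighbourhoods $U_i,U_i'$; once that is in place, everything reduces to standard deformation-retract manipulations of the kind used in the Cheeger--Fukaya--Gromov fibration argument.
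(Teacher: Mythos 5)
Your construction of the homotopy equivalence is correct and is in the same spirit as the paper's argument: both exploit that the tubes $\pi_i^{-1}(B_\rho(p))$ and $(\pi_i')^{-1}(B_\rho(p))$ deformation retract onto $F_i$ and $F_i'$ via horizontal lifts of the radial contraction in $B_\rho(p)$, and that for large $i$ each fiber sits inside both tubes. The paper then concludes via a sandwich of inclusions $B_{\delta/2}\subset B'_{3\delta/4}\subset B_\delta\subset B'_{3\delta/2}$ and a Whitehead-type argument, whereas you explicitly define $f_i=r_i'|_{F_i}$, $g_i=r_i|_{F_i'}$ and exhibit the homotopies by concatenating retraction paths. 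Both are valid; yours is a bit more hands-on and has the mild advantage of producing a concrete $f_i$ with the asserted homotopy-commutativity without appealing to CW-type machinery.

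The connectedness argument, however, has a genuine gap. You lift the nullhomotopy $h_s$ (of $\pi_i\circ\gamma$) horizontally starting at $x$, and claim $\tilde h_0=\gamma$. But $\tilde h_0$ is by definition the \emph{horizontal} lift of $\pi_i\circ\gamma$ starting at $x$, whereas $\gamma$ is a minimizing geodesic from $x$ to $y$ in $M_i$, which necessarily has a nonzero vertical component whenever $x\ne y$ (a horizontal curve starting at $x$ with $\pi_i\circ\gamma$ returning to $p$ inside a normal ball would force $\pi_i\circ\gamma$ to be constant). So $\tilde h_0(1)$ is some point $y'\in F_i$, generally distinct from $y$, and your path $s\mapsto\tilde h_s(1)$ joins $x$ to $y'$, not to $y$. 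You already have the correct ingredients to repair this: since $U_i=\pi_i^{-1}(B_\rho(p))$ deformation retracts onto $F_i$ via $r_i$, every point of $U_i$ lies in the same path-component as its image in $F_i$; combined with $F_i\subset B_{\rho/2}(p_i)\subset U_i$ and the connectedness of the metric ball $B_{\rho/2}(p_i)$, this shows $U_i$ is connected, and hence so is $F_i=r_i(U_i)$. That is precisely the paper's argument, and it replaces the flawed lifting step cleanly.
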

    \begin{proof}

Fix a small $\delta$ much smaller than the injectivity radius of $B$ at $p$. Let $B_\delta=\pi_i^{-1}(B_\delta(p))$ and $B'_\delta={\pi_i'}^{-1}(B_\delta(p))$. Then horizontal lifts of radial contractions along geodesics starting from $p$ give a deformation retraction of $B_\delta$ onto $F_i$ and also onto $B_{\delta/2}$. Hence $F_i\hookrightarrow B_{\delta/2}\hookrightarrow B_\delta$ are homotopy equivalences. Similarly $F_i'\hookrightarrow B'_{3/4\delta}\hookrightarrow  B'_{3/2\delta}$ are also homotopy equivalences. Since for large $i$ we have that $B_{\delta/2}\subset B'_{3/4\delta}\subset B_\delta\subset  B'_{3/2\delta}$ we get that the inclusion $ B'_{3/4\delta}\subset B_\delta$ is both surjective and injective on homotopy groups. This yields the  first claim.

To see that $F_i$ must be connected observe that $B_\delta\supset B_{\delta/2}(p_i)$ for all large $i$. Since $B_{\delta/2}(p_i)$ is connected and $B_\delta$ deformation retracts onto $F_i\subset B_{\delta/2}(p_i)$ it follows that $B_\delta$ is connected and hence so is $F_i$.
    
    \end{proof}
\begin{remark}
By the above lemma and  Lemma~\ref{local-fibr-C1-close} the homotopy  type of the fibers produced by the Fibration Theorem is well defined. The diffeomorphism type is also well defined but unlike the statement about the homotopy type this follows not from the conclusion of the Fibration Theorem but rather from the construction of the maps $\pi_i$ in its proof. To see this observe that in the diagonal argument in the proof of the Fibration Theorem we take $\delta_i\to 0$ very slowly so that $\nu_i\ll \delta_i$.  
Therefore the glued maps $\pi_i$ satisfy an extra condition that they are $\eta_i$-linear on scale $\delta_i$ for some $\eta_i\to 0$. Any two such maps must be $C^1$-close by Lemma~\ref{local-fibr-C1-close}.  This easily implies that  any two different fibration maps $\pi_i$ and $\pi_i'$ produced by the construction in the proof of the Fibration Theorem have diffeomorphic fibers. E.g. because one can use the same gluing construction to glue them into a single fibration map which agrees with $\pi_i$ and $\pi_i'$ on some disjoint balls $B_{\delta_i}(p_i)$ and $B_{\delta_i}(p_i')$ respectively. The same argument 
 also shows that the fibers of $\pi_i$ are diffeomorphic to the fibers of the local fibrations $\pi_{i,j}$ in the proof of the Fibration Theorem.

\end{remark}

\section{Induction Theorem}\label{sec: Induction thm}

\begin{theorem}[Induction Theorem]\label{thm:induction}
Supposes $n\le N$ and $( M_i^n, p_i)\to (\R^k, 0)$ in  pointed Gromov\textendash Hausdorff topology 
where  $(M_i, g_i, e^{-f_i}\cdot \mathcal H_n)$  is complete and satisfies $\sec \le \eps_i\to 0$ and  $\Ric_{f_i,N}\ge -\eps_i$. Let $F_i=\pi_i^{-1}(0)$ be the fiber for the local fibration map $\pi_i\co M_i\to B_1(0)\subset \R^k$ provided by the Fibration Theorem. 
Assume  that the inclusion $F_i\subset M_i$ is a homotopy equivalence (this is unambiguous thanks to Lemma~\ref{lem-hom-fiber})

Then for all large $i$  $F_i$ is aspherical and the universal covers $(\tilde M_i, \tilde p_i)$ pGH converge to $(\R^n, 0)$ for any $\tilde p_i$ in the fiber over $p_i$.

\end{theorem}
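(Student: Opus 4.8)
The plan is to argue by \textbf{downward induction on the codimension} $n-k\ge 0$, using the pseudo-cover machinery of Section~\ref{sec:pseudo-covers} together with the Fibration Theorem; the base of the induction is $k=n$. In the base case $\pi_i\co M_i\to \R^n$ is a $\nu_i$-almost Riemannian submersion with $\nu_i\to 0$ whose fibers are $0$-dimensional; since $\pi_i$ is uniformly close to a Gromov--Hausdorff approximation which almost preserves distances, the fibers have diameter tending to $0$, and by Lemma~\ref{lem-hom-fiber} they are connected, hence single points. Thus each $d_p\pi_i$ is an isomorphism, so $\pi_i$ is a local diffeomorphism; as $M_i$ and $\R^n$ are complete it has the horizontal path-lifting property, hence it is a covering map onto an open and closed subset of the connected manifold $\R^n$, i.e.\ onto all of $\R^n$, and with point fibers it is a bijection, so a diffeomorphism, which being a bijective $\nu_i$-almost Riemannian submersion is $(1+\nu_i)$-biLipschitz. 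Therefore $M_i=\tilde M_i\cong \R^n$ converges to $(\R^n,0)$ and $F_i$ is a point, in particular aspherical.

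\textbf{Inductive step $n-k\ge 1$.} As above the fibers satisfy $\diam F_i\to 0$; rescale the metrics so that $\diam F_i=1$, which only improves the bounds in \eqref{eq:cd+cba}. The rescaled fibers subconverge to a compact connected space $Y$ of diameter $1$, so $\dim Y\ge 1$. Pass to the pseudo-cover $\exp_{p_i}\co \hat M_i\to M_i$ at $p_i$; by Lemma~\ref{lem-bilip-almost-split} the ball $B_R(\cvertex)\subset\hat M_i$ is $\vk(\eps_i)$-close to $B_R(0)\subset\R^n$ for every fixed $R$, and composing $\exp_{p_i}$ with $\pi_i$ exhibits, in the limit, the base $\R^k$ as an orthogonal projection of $\R^n$. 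The pseudo-group $\Gamma_i(R)$ preserves the fiber $\exp_{p_i}^{-1}(F_i)\cap B_{2R}(\cvertex)$ and, by Corollary~\ref{cor-pseudocov-2}, satisfies $\bar\Gamma_i(R)\cong\pi_1(F_i)$; in the equivariant Gromov--Hausdorff limit it acts on the complementary $\R^{n-k}$ with quotient $Y$. Since $Y$ is compact of positive diameter the limit group is non-compact and hence contains genuine translations, so by Gromov's commutator/pigeonhole argument (the group-theoretic core of the Almost Flat Manifolds Theorem, here carried out for the pseudo-groups $\Gamma_i(R)$) there exist $q\ge 1$ and, after replacing $M_i$ by a finite cover $\bar M_i$ of order $\le C(n)$, an epimorphism $\pi_1(\bar M_i)\onto\Z^q$ realized by short loops whose associated $\Z^q$-cover $\check M_i$ satisfies $(\check M_i,\check p_i)\to(\R^{k+q},0)$ in pointed Gromov--Hausdorff topology.

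\textbf{Closing the step.} Now $\check M_i$ is a connected covering of $M_i$, so $\widetilde{\check M_i}=\tilde M_i$; it still satisfies \eqref{eq:cd+cba} with vanishing bounds, and by the Fibration Theorem there is a smooth almost Riemannian submersion $\check\pi_i\co \check M_i\to\R^{k+q}$ whose fibers $\check F_i$ have diameter $\to 0$. Since $\R^{k+q}$ is contractible, horizontal lifts of radial contractions retract $\check M_i$ onto $\check F_i$ (cf.\ the proof of Lemma~\ref{lem-hom-fiber}), so $\check F_i\hookrightarrow\check M_i$ is a homotopy equivalence and the hypotheses of the theorem hold for $(\check M_i,\check p_i)\to(\R^{k+q},0)$. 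As $n-(k+q)<n-k$, the inductive hypothesis gives, for all large $i$, that $\check F_i$ is aspherical and $\widetilde{\check M_i}\to(\R^n,0)$. Then $\check M_i\simeq\check F_i$ is aspherical, so $\tilde M_i=\widetilde{\check M_i}$ is contractible, whence $M_i$ and therefore $F_i\simeq M_i$ are aspherical; and $\tilde M_i=\widetilde{\check M_i}\to(\R^n,0)$, independently of the base point in the fiber over $p_i$ by equivariance. This closes the induction.

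\textbf{Main obstacle.} The crux is the inductive step, namely producing the nontrivial torus: the epimorphism $\pi_1(\bar M_i)\onto\Z^q$ with $q\ge 1$ and the associated convergence $\check M_i\to\R^{k+q}$. This is Gromov's theorem that the (pseudo-)group of short loops of a collapsed almost flat manifold is, up to bounded index, nilpotent with a detectable free abelian quotient; transcribing it to the pseudo-groups $\Gamma_i(R)$ via Corollary~\ref{cor-pseudocov-2}, choosing the rescaling so that the first torus is seen at scale one, keeping everything compatible with the equivariant Gromov--Hausdorff convergence, and then feeding the resulting cover back into the Fibration Theorem, is the technical heart of the proof; the remaining bookkeeping (fiber diameters, homotopy equivalences, base-point independence) is routine.
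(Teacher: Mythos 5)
Your high-level skeleton matches the paper: reverse induction on $k$, trivial base case $k=n$, rescaling so $\diam F_i=1$, lifting to pseudo-covers, passing to covers of $M_i$ that see more Euclidean directions, then applying the Fibration Theorem and the inductive hypothesis. The base case and the ``closing the step'' paragraph are essentially correct. The problem is the one step you yourself flag as ``the technical heart'': it is not carried out, and the route you sketch for it is not the one the paper takes and would not work as stated.

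\textbf{The gap.} You assert that, by ``Gromov's commutator/pigeonhole argument\ldots here carried out for the pseudo-groups $\Gamma_i(R)$,'' one can pass to a finite cover $\bar M_i$ \emph{of order $\le C(n)$}, obtain an epimorphism $\pi_1(\bar M_i)\onto\Z^q$ with $q\ge 1$, and conclude that the associated $\Z^q$-cover $\check M_i$ satisfies $(\check M_i,\check p_i)\to(\R^{k+q},0)$. None of this is justified, and two of the three claims are wrong as stated. First, the paper deliberately \emph{avoids} re-proving Gromov's pigeonhole/commutator machinery at the pseudo-group level; instead it passes to the equivariant Gromov--Hausdorff limit $(\hat N_i,\Gamma_i)\to(\R^n,G)$, where $G$ is a genuine Lie group acting on $\R^{n-k}$ with compact quotient, and then invokes Fukaya--Yamaguchi (\cite[Corollary~4.2]{FuYa}) to see that $G/G_0$ is virtually free abelian of positive rank. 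The homomorphism $\Gamma_i(R)\to G/G_0$ is then produced not by any pigeonhole argument inside $\Gamma_i(R)$ but by the uniform separation \eqref{orbit-separate} of the components of $G$: close elements of $\Gamma_i(R)$ go to the same coset, so the assignment is well-defined and a pseudo-group homomorphism. Transcribing Gromov's commutator argument to the pseudo-group setting, as you propose, is not a routine ``transcription'' — pseudo-groups lack closure under products, so the bounded-index subgroup arguments do not go through verbatim — and it is precisely the difficulty this paper's limit-group strategy is designed to sidestep.

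Second, the passage from the epimorphism $\rho_i\colon\pi_1(M_i)\onto\Gamma\cong\Z^l$ to a cover converging to a higher-dimensional Euclidean space is \emph{not} a single step of taking a $\Z^q$-cover after a bounded-order cover. The elements of $\Gamma$ are represented by loops of length $\sim\lambda_i=\diam F_i\to 0$; taking the full $\Z^l$-cover directly gives you no control on what scale the new $\R$-directions appear at. The paper instead interposes covers $\bar M_i$ with covering groups $\Z/2^{m_i}\Z$ (corresponding to $\rho_i^{-1}(\Z^{l-1}\times 2^{m_i}\Z)$) of \emph{unbounded} order, with $m_i$ tuned precisely so that $\diam\bar F_i\sim 1$; only then do the $\bar M_i$ converge to $\R^k\times T^q$, and only after a second application of the Fibration Theorem and a further cover by $\ker(\pi_1(\bar F_i)\to\Z^q)$ does one reach $M_i'\to\R^{k+q}$. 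Your claim that a bounded-order finite cover suffices contradicts this construction: in general the scale at which the new torus becomes visible has nothing to do with $n$ and the index $2^{m_i}$ must grow.

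\textbf{Minor points.} Your observation that $G$ is non-compact because $\R^{n-k}/G$ is compact of diameter $1$ is correct and is essentially the paper's fiber-bundle argument showing $\Gamma=G/G_0$ is infinite. The claim that $\check F_i\hookrightarrow\check M_i$ is a homotopy equivalence via horizontal lifts of radial contractions is fine. But without a proof of the epimorphism-and-convergence step, which is the entire content of the inductive step, the proposal does not establish the theorem.
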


\begin{proof}[Proof of the Induction Theorem (Theorem \eqref{thm:induction})]
The proof proceeds by reverse induction on $k$. When $k=n$ there is nothing to prove as in this case the fiber of the map $\pi_i\co M_i^n\to \R^n$ is a point since it's  connected by Lemma~\ref{lem-hom-fiber}. By assumption the inclusion of the fiber into $M_i$ is a homotopy equivalence which means that $M_i$ is contractible. In particular it's equal to its universal cover.

Induction step. Suppose   $( M_i^n, p_i)\to (\R^k, 0)$ is a contradicting sequence   satisfying the assumptions of the theorem (i.e. no subsequence of it satisfies the conclusion of the theorem) and the result is already known if the limit is $\R^m$ with $n\ge m>k$.
Let $\pi_i\co M_i\to \R^k$ be the fibration map over $B_1(0)\subset \R^k$ from the Local Fibration Theorem. By Corollary~\ref{cor-large-alm-linear} we can choose $\pi_i$ to be $\eps_i$-almost Riemannian submersion on $B_{r_i}(p_i)$ with $r_i\to\infty$ such that the intrinsic metric on the fibers over $0$ is $(1+\eps_i)$-Bilipschitz to the extrinsic metric. Let $F_i=\pi_i^{-1}(0)$ and let $\lambda_i=\diam(F_i)$. Recall that $F_i$ is connected by Lemma~\ref{lem-hom-fiber}.
Let $N_i=\frac{1}{\lambda_i}M_i$.

Let us lift $\pi_i$ to the pseudo-cover $\exp\co \hat N_i\to N_i$ at $p_i$.  Let $\Gamma_i$ be the corresponding pseudo-group.
Then by Corollary~\ref{cor-pseudocov-2},  for any fixed $R>10$ we have  that $\bar \Gamma_i(R)\cong \pi_1(F_i)$ for all large $i$.

By precompactness we have that 
$(N_i,p_i)$ subconverges to $(Y,p_\infty)$ and $( \hat N_i, \Gamma_i)$ equivariantly subconverges to $(Z,G)$ such that $Z/G=Y$.  By Lemma~\ref{lem-bilip-almost-split} $Z=\R^n$. Note that since $\eps_i\to 0$ we have that any $\Rk{\eps_i}\to \infty$ and hence  $G$ is a \emph{group}, rather than just a pseudo-group. Further,  the maps $\pi_i$ which are almost Riemannian submersions remain almost Riemannian submersions after rescaling and hence subconverge to a submetry $\pi\co Y\to \R^k$ which by the Splitting Theorem implies that $Y$ is isometric to $F\times \R^k$. Further, by construction $F$ is compact and has diameter 1.

 Since the projection map $Z\to Y$ is a submetry and lines in $Y$  lift to lines in $Z$, the  Splitting Theorem implies that $Z\cong Z_0\times \R^k$ where $G$ acts on $Z$ by just acting on $Z_0$ and acting trivially on the $\R^k$ factor. This means that $Z_0/G\cong F$. Since $Z=\R^n$ we must have $Z_0=\R^{n-k}$.

Let $G_0$ be the connected component of identity of $G$.
Then by \cite[Corollary 4.2]{FuYa} the group $G$ has virtually abelian components group  $\Gamma=G/G_0$ and contains a finite index subgroup $\hat G$ in $G$ (containing $G_0$) such  that $\hat G/ G_0$ is free abelian of finite rank and $Z_0/\hat G$ is a flat torus of positive dimension $m$.  Then the projection map $\R^{n-k}\to T^m$ is Riemannian submersion and hence a fiber bundle. The long exact homotopy sequence of this bundle shows that $\pi_0$ of the fibers must be infinite and hence
$\Gamma$ is \emph{infinite}.

The topology of the Lie group $\Isom(\R^{n-k})$ (and of its closed subgroup $G$) is the same as the one induced by the left invariant distance $d(h_1,h_2)=\max_{|x|\le 1} |h_1(x)-h_2(x)|$ where $h_1,h_2\in \Isom(\R^{n-k})$.  Since $G$ is a Lie group it easily follows that there is $\eta>0$ such that the distance between any two different connected components of $G$ is  $\ge \eta$. Indeed, if not there exist a sequence  of elements $h_j,h_j'$ in different components of $G$ with $d(h_j,h_j')\to 0$ as $j\to\infty$. But by left invariance this gives $d((h_i')^{-1}h_i,\mathrm{Id})\to 0$ which implies that $(h_j')^{-1}h_j\in G_0$ for large $j$. This is a contradiction and therefore
there is $\eta>0$ such that for any $h_1,h_2$ in different components of $G$ we have that

\begin{equation}\label{orbit-separate}
\sup_{|x|\le 1} |h_1(x)-h_2(x)|>\eta
\end{equation}

Fix an $R>1$.
Equation  \eqref{orbit-separate} implies that for all large $i$ and  for any element of $g_i\in \Gamma_i(R)$ there is a \emph{unique} coset  $g\cdot  G_0$ such that $g_i$ is equivariantly GH-close to some element of that coset on $B_R(0)\subset \R^n$.

This gives a well defined map  $\rho_{i,R}\co \Gamma_i(R)\to \Gamma$  which by uniqueness of $\rho_{i,R}$ is a pseudo-group homomorphism. Also, since $\Gamma_i$ converges to $G$ equivariantly, the image of $\rho_{i,R}$ contains $\Gamma(R/2)$  with respect to the action of $\Gamma$ on $\R^n/G_0$.

Choose $R\gg 1$ large enough so that  $\Gamma(R/2)$ contains a generating set of $\Gamma$. Then since $\Gamma$ is a group and not just a pseudo-group we get that there is a group epimorphism $\bar \Gamma_i(R)\onto \Gamma$ (cf.  \cite[Theorem 3.10]{FuYa}).


Since  $ \bar \Gamma_i(R)\cong \pi_1(F_i)\cong \pi_1(M_i)$ we get an epimorphism $\rho_i\co \pi_1(M_i)\onto \Gamma$.

Let us now go back to the original unrescaled sequence $(M_i,p_i)\to (\R^k,0)$. By above $\Gamma$ contains a normal subgroup of finite index which is free abelian of positive rank. We can pass to the cover $\check M_i$ of $M_i$ corresponding to this subgroup. Since the order of this cover does not depend on $i$ it's easy to see that $\check M_i$ still satisfy the assumptions of the Theorem. Thus without loss of generality we can assume that $\check M_i=M_i$ and $\Gamma\cong \Z^l$  for some $l>0$ to begin with.
Choose any sequence  of natural numbers $m_i$ and 
consider the covers $\bar M_i$ of $M_i$ corresponding to the normal subgroup  $\rho_i^{-1}(\Z^{l-1}\times 2^{m_i}\Z)\subset \pi_1(M_i,p_i)$. 
We have that the covers  $u_i\co\bar M_i\to M_i$ have abelian  covering groups $H_i\cong \Z_{m_i}$. By passing to a subsequence we can assume  that $(\bar M_i, H_i,\bar p_i)\to (X,H, p)$.
Let $\bar \pi_i=\pi_i\circ u_i$ and let $\bar F_i=\bar \pi_i^{-1}(0)=u^{-1}(F_i)$. Then we still have that $\bar F_i\hookrightarrow \bar M_i$ is a homotopy equivalence. By choosing $m_i$ appropriately we can make sure that $1\le \diam \bar F_i\le 2$. 
By slightly rescaling the sequence we can assume that $\diam \bar F_i=1$ for all $i$.

Since $u_i$ is a Riemannian submersion we have that $\bar \pi_i\co M_i\to \R^k$ have the same almost Riemannian submersion properties as $\pi_i$. By above $\bar \pi_i$ converge to a submetry $\bar\pi\co X\to \R^k$ with compact fibers over $0$ of diameter 1. Therefore, by the Splitting Theorem
$X\cong K\times \R^k$  and $\diam K=1$. Moreover, as before $H$ acts on $K\times \R^k$ diagonally and trivially on the Euclidean factor. As the limit of abelian groups the group $H$ is   a compact abelian Lie group. Further, by construction $(K\times \R^k)/H=\R^k$ i.e. the action of $H$ on $K$ is transitive. This implies that $K$ is a flat torus $T^q$ for some $q>0$. 

To summarize, we have that $(\bar M_i, \bar p_i) \to (\R^k\times T^q, 0)$ for any lift $\bar p_i$ of $p_i$ and the almost Riemannian submersions $\bar \pi_i$ converge to the coordinate projection $\bar\pi\co \R^k\times T^q\to\R^k$.

Since  $\R^k\times T^q$ is a smooth Riemannian manifold the Fibration Theorem is applicable.  Hence for some $\bar r_i$ slowly converging to $\infty$ we have that $\bar\Phi_i\co B_{\bar r_i}(\bar p_i)\to \R^k\times T^q$ are $\delta_i$-almost Riemannian submersion with connected fibers provided by the Fibration Theorem which is also an $\delta_i$-GH approximation to  $B_{\bar r_i}(0,1)\subset \R^k\times T_q$. Here $\delta_i\to 0$.

 Further let $\bar \Pi_i=\bar\pi\circ \bar \Phi_i$ and let   $\bar F_i'=\bar \Pi_i^{-1}(0)$. By construction we have that $\bar \Phi_i\co \bar F_i'\onto T^q$ is a $\delta_i$-almost Riemannian submersion with connected fibers and in particular a fiber bundle since both the domain and the target are closed manifolds.

We have that both  $\bar \Pi_i$ and $\bar \pi_i$ are $\delta_i$-almost Riemannian submersions  and both  $\bar \Pi_i$ and $\bar \pi_i$ converge to $\bar \pi$. Note that the diameters of the fibers of $\bar \Phi_i$ go to $0$. Hence by adjusting $\delta_i$ if necessary we can assume that diameters of the fibers of  $\bar \Phi_i$  are $\le \delta_i$.

Therefore the fibers  $\bar F_i'=\bar \Pi_i^{-1}(0)$ and  $\bar F_i=\bar \pi_i^{-1}(0)$ are Hausdorff close in $\bar M_i$ and by the same argument as in Lemma~\ref{lem-hom-fiber} there is a homotopy equivalence
$\bar F_i'\to \bar F_i$ which homotopy commutes with the inclusions of $\bar F_i$ and $\bar F_i'$ into $\bar M_i$. Since both  $\bar F_i$ and $\bar F_i'$ are connected, from the Fibration Theorem we have an epimorphism $\bar \rho_i\co\pi_1(\bar F_i)\cong \pi_1(F_i')\to \pi_1(T^q)=\Z^q$.  Let $M_i'$ be the cover of $\bar M_i$ corresponding to $\ker\bar\rho_i$. The maps $\bar\Phi_i\co \bar M_i \to \R^k\times T^q$ lift to the maps $\Phi_i'\co M_i'\to \R^k\times \R^q$. The lifted maps remain $\delta_i$-almost Riemannian submersions with $\delta_i$-small fibers on $B_{\bar r_i}(p_i')$ where $p_i'$ is any lift of $\bar p_i$. Since $\bar r_i\to\infty$ and $q>0$ we have constructed a cover $M_i'$ of $M_i$ which pointed converges to a Euclidean space of dimension $>k$. Hence the induction assumption applies to $M_i'$ .

This finishes the proof of the Induction Theorem.

\end{proof}
\begin{remark}\label{rem-non-complete-induction}

Since in the proof of the Induction Theorem we deal with pointed convergence we don't actually need the manifolds $M_i$ to be complete. As in the case of Local and Global Fibration Theorems (see remarks ~\ref{non-complete-loc-fibra} and ~\ref{non-complete-global-fibra}) it's enough to know that the closed balls $\bar B_{R_i}(p_i)$ are compact fo some $R_i\to \infty$. Alternatively, one can assume that $M_i$ are complete but the curvature bounds only hold on $\bar B_{R_i}(p_i)$.
\end{remark}

Observe that an $\eps$-almost Riemannian submersion $\pi\co M_1^n\onto M_2^n$  between two manifolds of the same dimension is locally $(1+ \eps)$-Bilipschitz.
Further, if the fibers are small relative the injectivity radius of the target then by Lemma~\ref{lem-hom-fiber} the fibers are connected and hence are points. That means that $\pi$ is  $(1+ \eps)$-Bilipschitz globally and not just locally.

 Therefore
by an argument by contradiction the Induction Theorem applied to $k=0$  together with the Fibration Theorem  immediately yield 

\begin{theorem}[Asphericity Theorem]\label{asphericity-thm}
Given $N>1$ there exist $\vk(\eps|N)\to 0$ and $R(\eps|N)\to \infty$ as $\eps\to 0$ such that if $(M^n,g, e^{-f}\cdot \mathcal H_n)$ with $n\le N$ satisfies
$\sec\le\eps, \diam\le \eps $ and $\Ric_{f,N}\ge -\eps$ then $M$ is aspherical and  and  every ball of radius $R(\eps)$ in the universal cover $\tilde M$ is  $\vk(\eps|N)$-Lipschitz-Hausdorff close to the ball in $\R^n$ of the same radius.
\end{theorem}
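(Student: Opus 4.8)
The plan is to obtain the Asphericity Theorem as a direct consequence of the Induction Theorem (Theorem~\ref{thm:induction}) in the degenerate case $k=0$, combined with the equidimensional case of the Fibration Theorem, via the usual passage to sequences. First I would restate the conclusion in sequential form: it suffices to show that for any sequence $(M_i^n,g_i,e^{-f_i}\cdot\mathcal H_n)$ of weighted closed Riemannian manifolds with $n\le N$, $\sec\le\eps_i$, $\diam\le\eps_i$, $\Ric_{f_i,N}\ge-\eps_i$ and $\eps_i\to 0$, there exist $\nu_i\to 0$ and $R_i\to\infty$ such that for all large $i$ the manifold $M_i$ is aspherical and every ball of radius $R_i$ in $\tilde M_i$ is $\nu_i$-Lipschitz\textendash Hausdorff close to $B_{R_i}(0)\subset\R^n$. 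The functions $\vk(\eps|N)$ and $R(\eps|N)$ in the statement are then extracted by the standard compactness/contradiction argument.

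Second, I would feed such a sequence into the Induction Theorem with $k=0$. Choosing basepoints $p_i\in M_i$, the hypothesis $\diam M_i\to 0$ gives $(M_i,p_i)\to(\R^0,0)$ in the pointed Gromov\textendash Hausdorff topology; the constant maps $\pi_i\co M_i\to\R^0$ are trivially $0$-almost Riemannian submersions whose unique fibers are $F_i=M_i$, and the inclusion $F_i\hookrightarrow M_i$ is the identity, hence a homotopy equivalence. Thus Theorem~\ref{thm:induction} applies and yields, for all large $i$, that $M_i=F_i$ is aspherical and that $(\tilde M_i,\tilde p_i)\to(\R^n,0)$ in the pointed Gromov\textendash Hausdorff topology for any lift $\tilde p_i$ of $p_i$.

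Third, I would upgrade this pointed Gromov\textendash Hausdorff convergence to Lipschitz\textendash Hausdorff closeness on large balls. Equip $\tilde M_i$ with the pulled-back metric and weight; since the covering map is a local isometry, $\tilde M_i$ is complete and still satisfies $\sec\le\eps_i$, $\Ric_{\tilde f_i,N}\ge-\eps_i$. Applying Corollary~\ref{cor-large-alm-linear} (a rescaled form of the Fibration Theorem) to $(\tilde M_i,\tilde p_i)\to(\R^n,0)$ produces, for each fixed $R>0$ and all large $i$, smooth maps $q_i\co B_R(\tilde p_i)\to\R^n$ that are $\nu_i(R)$-almost Riemannian submersions with $\nu_i(R)\to 0$, are $\nu_i(R)$-close to the given Gromov\textendash Hausdorff approximations, and whose fibers carry intrinsic metric $(1+\nu_i(R))$-Bilipschitz to the extrinsic one. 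Since source and target have equal dimension $n$, such $q_i$ are locally $(1+\nu_i(R))$-Bilipschitz; moreover their fibers have diameter tending to $0$ (two points in one fiber are carried within $O(\nu_i(R))$ of each other by a Gromov\textendash Hausdorff approximation into $\R^n$), so Lemma~\ref{lem-hom-fiber} applies and the fibers are connected, hence — being $0$-dimensional — single points. As in the remark preceding the theorem and in the proof of Lemma~\ref{lem-bilip-almost-split}, a curve-lifting argument then shows $q_i$ is injective and globally $(1+\nu_i(R))$-Bilipschitz onto an open set containing $B_{R-\nu_i(R)}(0)$, i.e. $B_R(\tilde p_i)$ is $\nu_i(R)$-Lipschitz\textendash Hausdorff close to $B_R(0)\subset\R^n$. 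A diagonal choice $R=R_i\to\infty$, slow enough that $\nu_i(R_i)\to 0$, gives the desired closeness of $B_{R_i}(\tilde p_i)$. To pass from $\tilde p_i$ to an arbitrary $x\in\tilde M_i$ I would use that the deck group acts by isometries with quotient of diameter $\le\eps_i$, so $x$ lies within $\eps_i$ of some deck translate $\gamma\tilde p_i$; the inclusions $B_{R_i-\eps_i}(\gamma\tilde p_i)\subset B_{R_i}(x)\subset B_{R_i+\eps_i}(\gamma\tilde p_i)$ together with the isometry $B_{R_i}(\gamma\tilde p_i)=\gamma\!\left(B_{R_i}(\tilde p_i)\right)$ transfer the estimate, at the cost of replacing $\nu_i$ by $\nu_i+C\eps_i\to 0$.

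I do not expect a genuine obstacle in this assembly, since all the substantive analysis is packaged in the Induction and Fibration Theorems. The two points that need care are: (i) verifying that $k=0$ is a bona fide, if degenerate, instance of the Induction Theorem — this is merely the observation that the constant map is a fibration with fiber all of $M_i$; and (ii) the passage from "almost Riemannian submersion" to "global Bilipschitz map" in equal dimensions, which is precisely what converts the Gromov\textendash Hausdorff closeness coming out of the Induction Theorem into the stronger Lipschitz\textendash Hausdorff closeness asserted here. If anything is fiddly it is keeping track of the various $\nu$-quantities through the rescalings, the deck-transformation transfer, and the diagonal argument, but all of that is routine.
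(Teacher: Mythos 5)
Your proposal is correct and follows essentially the same route as the paper: an argument by contradiction reduces to the $k=0$ case of the Induction Theorem (giving asphericity and $\tilde M_i\to\R^n$), followed by the Fibration Theorem in equal dimensions, where the observation that an almost Riemannian submersion between $n$-manifolds with small connected fibers is a global Bilipschitz map upgrades pointed Gromov\textendash Hausdorff convergence to Lipschitz\textendash Hausdorff closeness. The extra steps you spell out (the diagonal choice of $R_i$, the transfer to arbitrary basepoints via deck transformations and $\diam M_i\le\eps_i$, and the extraction of $\vk(\eps|N)$ and $R(\eps|N)$ from the sequential statement) are exactly the routine bookkeeping the paper leaves implicit.
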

\begin{remark}
In the setting of the Asphericity Theorem
Klingenberg's Lemma applied to  the universal cover $\tilde M$ implies that the injectivity radius of $\tilde M$ is $\ge \Vk(\eps|N)$.
\end{remark}
\section{Proof of the main theorem up to homeomorphism}\label{sec:main-homeo}
The Asphericity Theorem implies the Main Theorem up to homeomorphism using a combination of several known results. 
First we need the following Lemma 
\begin{lem}\label{lem-dens-semiconcave}
Let $\eps>0$ and let $(M^n,g,e^{-f} \cdot \mathcal H_n)$ be a complete Riemannian manifold satisfying $f\ge 0, \sec_M\le \eps$ and $\Ric_{f,N}M\ge -\eps$ for some $N\ge n$.

Then $e^{-f/N}$ is $\vk(\eps|N)$-concave.
\end{lem}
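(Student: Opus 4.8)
The plan is to reduce the statement to a pointwise Hessian bound: since the manifold and $f$ are smooth, it suffices to show $\Hess(e^{-f/N})\le \vk(\eps|N)\cdot g$ everywhere, because integrating this inequality along a unit-speed geodesic $\gamma$ gives $(e^{-f/N}\circ\gamma)''\le \vk(\eps|N)$, which is exactly $\vk(\eps|N)$-concavity. First I would dispose of the boundary case $N=n$: there the condition $\Ric_{f,N}\ge -\eps$ forces $f$ to be constant, so $e^{-f/N}$ is constant and there is nothing to prove. From now on assume $N>n$.

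Then comes the main computation. Writing $u=e^{-f/N}$, one has $\nabla u=-\tfrac1N e^{-f/N}\nabla f$ and
\[
\Hess u=\frac1N\,e^{-f/N}\Bigl(-\Hess f+\tfrac1N\,df\otimes df\Bigr).
\]
The hypothesis $\Ric_{f,N}\ge -\eps$ unwinds to $\Ric+\Hess f-\frac{df\otimes df}{N-n}\ge -\eps\,g$, i.e.
\[
-\Hess f\le \Ric+\eps\,g-\frac{df\otimes df}{N-n}.
\]
Substituting this into the bracket above, the two $df\otimes df$ terms combine with coefficient $\tfrac1N-\tfrac1{N-n}=-\tfrac{n}{N(N-n)}\le 0$; since $df\otimes df\ge 0$ this contribution can simply be discarded, giving
\[
-\Hess f+\tfrac1N\,df\otimes df\le \Ric+\eps\,g.
\]

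To finish I would feed in the two remaining hypotheses. The upper sectional bound $\sec\le\eps$ yields $\Ric\le(n-1)\eps\,g$ (tracing the sectional curvatures over an orthonormal frame), and $f\ge 0$ yields $e^{-f/N}\le 1$. Together with $n\le N$ this gives
\[
\Hess u\le \frac1N\,e^{-f/N}\bigl(\Ric+\eps\,g\bigr)\le \frac{n}{N}\,\eps\,g\le \eps\,g,
\]
so $e^{-f/N}$ is $\eps$-concave and the lemma holds, e.g. with $\vk(\eps|N)=\eps$ (one could keep the slightly sharper $\tfrac nN\eps$).

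I do not expect a genuine obstacle: the lemma is essentially a one-line pointwise estimate. The only points requiring care are the sign of $\tfrac1N-\tfrac1{N-n}$ — this is exactly why one divides the exponent by $N$ (in fact any divisor $c\ge N-n$ works, while for the naive choice $e^{-f}$ the analogous coefficient $1-\tfrac1{N-n}$ can be positive and the argument breaks) — the trivial degenerate case $N=n$, and the elementary fact that a pointwise Hessian bound upgrades to geodesic semiconcavity by a one-dimensional ODE comparison.
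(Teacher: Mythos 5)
Your proof is correct, and it takes a genuinely different (and more self-contained) route than the paper. The paper simply cites \cite[Proposition 6.8]{Kap-Kell-Ket-19}, which asserts that if $(X,d,v\cdot\mathcal H_n)$ with $v\le 1$ is $\CAT(1)$ and $\RCD(-1,N)$, then $v^{1/N}$ is $C(N)$-concave, and then rescales. That cited proposition holds in the nonsmooth setting of $\RCD+\CAT$ metric measure spaces and is proved there by (weak) Bochner-type arguments, so the paper is invoking a much heavier external black box. Your argument, by contrast, is a direct pointwise Hessian computation available precisely because $M$ and $f$ are smooth: you compute
\[
\Hess\bigl(e^{-f/N}\bigr)=\tfrac1N e^{-f/N}\Bigl(-\Hess f+\tfrac1N\,df\otimes df\Bigr),
\]
feed in $\Ric_{f,N}\ge -\eps g$ to bound $-\Hess f$, observe that the leftover $df\otimes df$ carries coefficient $\tfrac1N-\tfrac1{N-n}\le 0$ and can be dropped, and then use $\sec\le\eps\Rightarrow\Ric\le(n-1)\eps g$ together with $f\ge0\Rightarrow e^{-f/N}\le1$ to conclude $\Hess(e^{-f/N})\le\eps g$. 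This is correct, and you even get the explicit constant $\vk(\eps|N)=\eps$ (with no $N$-dependence), which is a bit sharper than the unspecified $\vk(\eps|N)$ the paper obtains from the black-box $C(N)$ and rescaling. The only cosmetic slip is your phrase ``one-dimensional ODE comparison'': for a smooth $u$ on a smooth manifold, the pointwise bound $\Hess u\le\lambda g$ is literally equivalent to $(u\circ\gamma)''\le\lambda$ for all unit-speed geodesics $\gamma$ (no comparison argument is involved). Your observation that the exponent $1/N$ is exactly what makes the sign of $\tfrac1N-\tfrac1{N-n}$ favorable is the right conceptual point; it is precisely what the exponent $1/N$ in the cited proposition is doing for the same reason.
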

\begin{proof}
By  ~[Proposition 6.8]\cite{Kap-Kell-Ket-19} there is a $C=C(N)$ such that if $(X,d,v\cdot \mathcal H_n)$ with $v\le 1$ is $\CAT(1)$ and $\RCD(-1,N)$ then $v^{1/N}$ is $C(N)$-concave.
The claim of the lemma follows by rescaling.
\end{proof}
\begin{proof}[Proof of the Main Theorem up to homeomorphism]

By the Asphericity Theorem there exists $\eps=\eps(N)$ such that if $(M^n,g,e^{-f} \cdot \mathcal H_n)$ is a weighted closed Riemannian manifold with $n\le N$ and $\sec\le\eps, \diam\le \eps $ and $\Ric_{f,N}\ge -\eps$ then $M$ is aspherical. 
By increasing $N$ by at most 1 we can assume that $N$ is an integer. Let $q=N-n$. We can assume $q\ge 1$ since when $q=0$  as was the explained in the introduction the main Theorem reduces to Gromov's Almost Flat  Manifolds Theorem.

Given such $M$  let $u=e^{-f/q}$. We can rescale the weight $u$ to have $\max u=1$. Then by Lemma~\ref{lem-dens-semiconcave}  $u^{q/N}$ is $\vk(\eps|N)$-concave. This together with the fact that $\diam_M\le \eps$ easily implies that $u$ is almost constant in $C^1$ i.e.
\begin{equation}\label{eq-almost-const}
 1-\vk(\eps|N)\le u\le 1\quad \text{ and }\quad |\nabla u|\le \vk(\eps|N). 
\end{equation}

Further, since $q<N$ by $\vk(\eps)$-concavity of $u^{q/N}$, the chain rule and  \eqref{eq-almost-const} we  get that $u$ is also  $\vk(\eps|N)$-concave. In particular
\begin{equation}\label{eq:simi-super-harm}
\Delta u\le \vk(\eps|N)
\end{equation}
 Let $T^q_\eps=\R^q/\frac{\eps}{\sqrt q}\Z^q$ be the square flat torus of diameter $\eps$. Consider the warped product $(E, h)=M\times_u T_\eps$ with $h=g+u^2 ds^2$ where $ds^2$ is the canonical flat metric on $T^q_\eps$.

Then by the O'Neil formulas ~\cite{on} the Ricci tensor of $M\times_u T_\eps$  has the following form. Let  $ X,Y$ be  horizontal and $V,W$ are vertical tangent vectors at a point in $E$.
\begin{align}
\Ric(X,Y)=\Ric^M(X,Y)-\frac{q}{u}\Hess_u(X,Y)=\Ric^M_{f,N}(X,Y)\\
\Ric (X,V)=0\\
\Ric(V,W)=\Ric^{T_\eps}(V,W)-\langle V,W\rangle_{\bar g}\cdot (\frac{\Delta u}{u}+\frac{q-1}{u^2}||\nabla u|^2_g)
\end{align}
Since $T_\eps$ is flat and hence $\Ric^{T_\eps}(V,W)=0$ the assumptions of the theorem together with inequalities  \eqref{eq-almost-const} and \eqref{eq:simi-super-harm} imply (cf.  \cite[Theorem 3]{lobaem}) that  $E$ satisfies $$\Ric\ge -\vk(\eps|N).$$
It also obviously has $\diam \le 2\eps$.

By the Margulis Lemma ~\cite[Theorem 1]{kapovitchwilking} if $\eps$ is chosen small enough then $\pi_1(M\times T)$ is virtually nilpotent.
Hence the same is true for $M$. Since $M$ is aspherical, by a combination of known results this implies that $M$ is \emph{homeomorphic} to an infranilmanifold.
Indeed. Since $M$ is aspherical $\pi_1(M)$ is torsion free virtually nilpotent of rank $n$.
By a result of  Lee and Raymond \cite{LR} any such group must be
 isomorphic to the fundamental group of a compact infranilmanifold. Now the claim follows by the Borel  conjecture which asserts that  the homeomorphism type of a closed aspherical manifold is determined by its fundamental group. The Borel conjecture is open in general but it is known to hold for virtually nilpotent fundamental groups. In dimensions  above  4 this follows by work of Farrell and Hsiang ~\cite{FH}.
 The $4$-dimensional case follows from work of Freedman--Quinn \cite{FQ}. 
Lastly, the 3-dimensional case follows from Perelman's solution of the geometrization conjecture. The 2-dimensional case easily follows from the classification of closed 2-manifolds.
\end{proof}
The above proof easily generalizes to show
\begin{theorem}\label{infranil-fibers}
Let $( M_i^n, p_i)\to (B^m, p)$ in  pointed Gromov\textendash Hausdorff topology where $B^m$ is smooth and compact and all $(M_i, g_i, e^{-f_i} \mathcal H_n)$ satisfy \eqref{eq:cd+cba}.
Let $\pi_i\co M_i\to B$ be the almost Riemannian submersions provided by the fibration theorem. Then for all large $i$ the fibers $F_i=\pi_i^{-1}(p)$ are homeomorphic to compact infranilmanifolds.
\end{theorem}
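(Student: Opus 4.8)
The plan is to run the proof of Theorem~\ref{thm:almostflat} up to homeomorphism fibrewise. By the Fibration Theorem (Theorem~\ref{pointed fibration-thm}) and the remarks following it, the spaces $F_i$ are closed smooth manifolds of dimension $n-m$, well defined up to diffeomorphism, each diffeomorphic to a fibre of one of the local fibrations appearing in its proof. As in the proof of Theorem~\ref{thm:almostflat} up to homeomorphism it then suffices to show that, for all large $i$, the manifold $F_i$ is aspherical and $\pi_1(F_i)$ is virtually nilpotent: asphericity forces $\pi_1(F_i)$ to be torsion free, so it is torsion free virtually nilpotent of rank $n-m$ and hence, by Lee\textendash Raymond~\cite{LR}, isomorphic to $\pi_1$ of a compact infranilmanifold $X_i$; the Borel conjecture for virtually nilpotent fundamental groups (valid in all dimensions by~\cite{FH} in dimension $\ge 5$, by~\cite{FQ} in dimension $4$, by Perelman's geometrization in dimension $3$, and by the classification of surfaces in dimension $2$) then yields a homeomorphism $F_i\cong X_i$.

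For asphericity I would localize on the base. Fix $p\in B$, let $\delta$ be much smaller than the injectivity radius of $B$ at $p$, and set $U=B_\delta(p)$. Since $M_i$ is closed, $\pi_i\colon\pi_i^{-1}(U)\to U$ is a proper submersion, hence a fibre bundle over the contractible set $U$, so the inclusion $F_i\hookrightarrow\pi_i^{-1}(U)$ is a homotopy equivalence. Choose $\mu_i\to 0$ with $\diam(F_i)/\mu_i\to 0$, which is possible because $\diam(F_i)\to 0$. Then $\tfrac1{\mu_i}\pi_i^{-1}(U)$ has $\sec\le\mu_i^2\uk\to 0$ and $\Ric_{f,N}\ge\mu_i^2\lk\to 0$, its closed balls about $p_i$ of radius up to $\delta/(2\mu_i)\to\infty$ are compact, and, since $\tfrac1{\mu_i}U$ converges to the tangent cone $T_pB\cong\R^m$ while the fibres collapse, $(\tfrac1{\mu_i}\pi_i^{-1}(U),p_i)\to(\R^m,0)$ in the pointed Gromov\textendash Hausdorff topology. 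Applying the Fibration Theorem to this sequence (in the non\textendash complete form, cf. Remarks~\ref{non-complete-global-fibra} and~\ref{rem-non-complete-induction}), after composing with a chart, produces a local fibration over $\R^m$ whose fibre is homotopy equivalent to $F_i$ by Lemma~\ref{lem-hom-fiber} and includes into the total space as a homotopy equivalence. Hence the Induction Theorem (Theorem~\ref{thm:induction}) applies and shows that $F_i$ is aspherical for all large $i$.

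To get virtual nilpotency of $\pi_1(F_i)$ I would imitate the warped product argument in the proof of Theorem~\ref{thm:almostflat}, carried out over the local piece $\pi_i^{-1}(U)$ rather than over a closed manifold of small diameter. After increasing $N$ by at most $1$ we may assume $N$ is an integer; set $q=N-n$ and assume $q\ge 1$ (the case $q=0$ reduces to the fibre statement for Gromov's theorem). Rescaling $\pi_i^{-1}(U)$ so that its curvature bounds tend to $0$ and normalizing $u_i=e^{-f_i/q}$ on a ball of fixed size near the fibre, Lemma~\ref{lem-dens-semiconcave} makes $u_i^{q/N}$ semiconcave with semiconcavity constant tending to $0$; as in the proof of Theorem~\ref{thm:almostflat} this gives that $u_i$ is $C^1$-close to a constant and that $\Delta u_i$ tends to $0$ there, and the O'Neil formulas~\cite{on} (used as in~\cite{lobaem}) then show that the warped product $W_i=\pi_i^{-1}(U)\times_{u_i}T^q_{\rho_i}$, with $\rho_i\to 0$, satisfies $\Ric_{W_i}\ge-\eps_i$ with $\eps_i\to 0$ on a ball of definite radius about a point $\hat p_i$ of the compact fibre $F_i\times_{u_i}T^q_{\rho_i}\subset W_i$. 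This fibre has diameter tending to $0$ and, being the pullback of the homotopy equivalence $F_i\hookrightarrow\pi_i^{-1}(U)$, includes into $W_i$ as a homotopy equivalence, so $\pi_1(W_i)\cong\pi_1(F_i)\times\Z^q$. By the generalized Margulis Lemma~\cite{kapovitchwilking}, for a universal $\eps>0$ and all large $i$ the image of $\pi_1(B_\eps(\hat p_i))\to\pi_1(B_1(\hat p_i))$ is virtually nilpotent; since for large $i$ one has $F_i\times_{u_i}T^q_{\rho_i}\subset B_\eps(\hat p_i)$ and the inclusion of this fibre into $W_i$ is a $\pi_1$-isomorphism, the composite $\pi_1(B_\eps(\hat p_i))\to\pi_1(B_1(\hat p_i))\to\pi_1(W_i)$ is onto, so $\pi_1(W_i)$, and therefore $\pi_1(F_i)$, is virtually nilpotent. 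Combined with the asphericity of $F_i$ this completes the argument via Lee\textendash Raymond and the Borel conjecture as in the first paragraph.

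I expect the delicate point to be the bookkeeping of the two scales in the last step: one must rescale $\pi_i^{-1}(U)$ so that its base becomes Euclidean (as needed for the Induction Theorem) and its horizontal curvature tends to $0$ (as needed for the Ricci estimate on $W_i$), while keeping the fibre $F_i$ together with the torus $T^q_{\rho_i}$ small compared to the radius of the ball on which the warped\textendash product Ricci bound is valid, so that the Margulis ball still captures all of $\pi_1(F_i)$. A second point that must be handled with care — and the reason the whole argument is carried out over a contractible $U\subset B$ and over the warped product built from $\pi_i^{-1}(U)$, not over $M_i$ itself — is that here, unlike in the Main Theorem, the inclusion $F_i\hookrightarrow M_i$ need not be a homotopy equivalence.
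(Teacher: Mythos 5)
Your proposal follows the same strategy as the paper: reduce the problem to showing that $F_i$ is aspherical (by localizing over a small contractible $U\subset B$, rescaling so the base tends to $\R^m$ while the fiber collapses, and invoking the Induction Theorem in its non-complete form) and that $\pi_1(F_i)$ is virtually nilpotent (via a warped product with a collapsing torus, the Bakry--Emery Ricci computation, and the Margulis Lemma), then conclude with Lee--Raymond and the Borel conjecture. The asphericity step matches the paper's argument almost verbatim, and your explicit verification that $F_i\hookrightarrow\pi_i^{-1}(U)$ is a homotopy equivalence (because $\pi_i$ is a fibre bundle over the contractible $U$) supplies the detail that the paper leaves implicit when checking the hypothesis of the Induction Theorem.

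There is, however, one genuine gap in the virtual nilpotency step. You form $W_i=\pi_i^{-1}(U)\times_{u_i}T^q_{\rho_i}$ and then apply the Kapovitch--Wilking Margulis Lemma to conclude that the image of $\pi_1(B_\eps(\hat p_i))\to\pi_1(B_1(\hat p_i))$ is virtually nilpotent. But $\pi_i^{-1}(U)$ is an open, incomplete subset of $M_i$, so $W_i$ is not complete, while \cite[Theorem 1]{kapovitchwilking} is stated for \emph{complete} Riemannian manifolds (with a Ricci lower bound only required on the unit ball). The paper addresses exactly this point: before forming the warped product it deforms the metric on $\lambda_iB_\delta(p_i)$ to a complete metric $N_i$ that agrees with the original on $\lambda_iB_{\delta/2}(p_i)$, imposing no curvature bounds on the deformed region, and then forms $E_i=N_i\times_{u_i}T^q_{\eps_i}$; the warped-product Ricci bound then holds on a ball $B_{R_i}(q_i)$ with $R_i\to\infty$, which suffices for the Margulis Lemma at $q_i$. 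The same fix works in your setup, but without it the step as written does not go through. Your concern about scale bookkeeping at the end is legitimate but is handled by choosing $\lambda_i\to\infty$ (resp. $\mu_i\to 0$) slowly enough that $\lambda_i\diam F_i\to 0$ while $\lambda_i\delta/100\to\infty$.
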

\begin{proof}
Fix a small $0<\delta\ll \injrad(B)$. Then $(B_\delta(p_i), p_i)\to B_\delta(p),p)$. By rescaling by $\lambda_i\to\infty$ very slowly we get that $(\lambda_iB_{\delta}(p_i),p_i)\to (\R^m,0)$ and $\lambda_i \diam F_i$ still goes to $0$. By  Remark~\ref{rem-non-complete-induction} the Induction Theorem still applies to this convergence even though $\lambda_iB_{\delta}(p_i)$ is not complete.  This gives that $F_i$ is aspherical for all large $i$.
It's easy to deform the Riemannian metrics on $\lambda_iB_{\delta}(p_i)$ to be complete and agree with the original metrics on $\lambda_iB_{\delta/2}(p_i)$. Let $N_i$ be the resulting complete Riemannian manifolds.
 Note that we do not require \emph{any} curvature bounds on the new metrics outside $\lambda_iB_{\delta/2}(p_i)$. Let $E_i=N_i\times_{u_i}T^q_{\eps_i}$ where $q=N-n$ (we can again assume that $N$ is an integer) $u_i=e^{-f_i/q}$ and $\eps_i\to 0$. Then as before we get that $\Ric_{E_i}\ge -\nu_i\to 0$ on $B_{R_i}(q_i)$ where $q_i=(p_i,1)\in N_i\times T^q$ and  $R_i=\lambda_i\cdot \delta/100\to\infty$. Even though the lower Ricci bound on $E_i$ doesn't hold globally 
 the Margulis Lemma  \cite[Theorem 1]{kapovitchwilking}  still applies in this situation at $q_i$. It says that for some universal $\eps=\eps(N)$ the image of  $\pi_1(B_\eps^{E_i}(q_i))$ in $\pi_1(E_i)$ is virtually nilpotent. By the Fibration Theorem this image is isomorphic to $\pi_1(F_i\times T^q)\cong \pi_1(F_i)\times \Z^q$ and therefore  $\pi_1(F_i)$ is virtually nilpotent for all large $i$.
Since $F_i$ is aspherical this again implies that $F_i$ is homeomorphic to a closed infranilmanifold by the same argument as before.
\end{proof}
\begin{remark}
It should certainly be true that in the above theorem the fibers are diffeomorphic to infranilmanifolds and not  just homeomorphic. However, at the moment we only have a proof of this fact in the special case when $B$ is a point, i.e. in the setting of the Main Theorem. The proof  uses a Ricci flow argument from the next section. 
We do note that in general it is known \cite{FKS-exotic-nilmanifolds} that for $n\ne 4$ a finite cover of a smooth $n$-manifold which is homeomorphic to an infranilmanifold is diffeomorphic to a nilmanifold. Thus for $n-m\ne 4$ the diffeomorphism statement in the above theorem holds for finite covers of the the fibers $F_i$.
\end{remark}


    \section{Smoothing via Ricci flow}\label{sec:ricci flow}
    With Asphericity Theorem~\ref{asphericity-thm}  in hand we can show that Ricci flow turns  the warped products $E=M\times _{e^{-f/q}}T^q_\eps$ constructed in the previous section into almost flat manifolds in classical sense in uniform time which easily implies the Main Theorem. The Ricci flow claim  follows  from the proof (but not the statement) of the following theorem due to Huang, Kong, Rong and Xu
    
    \begin{theorem}\label{noncol-ric-alm-flat} \cite[Theorem A]{Huang-Kong-Rong-Xu}
        For any natural $n>1$ and real $v>0$ there exists $\eps=\eps(n,v)$ such that if $(M^n,g)$ is a closed Riemannian manifold satisfying $\Ric_M\ge -1, \diam (M)\le \eps$ and $\vol(B_1(\tilde p))\ge v$ for some point $\tilde p$ in the universal cover $\tilde M$ of $M$ then $M$ is diffeomorphic to a compact infranilmanifold.
    \end{theorem}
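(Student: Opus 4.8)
The plan is to run the Ricci flow on $M$ for a definite time and reduce to the classical Almost Flat Manifolds Theorem of Gromov and Ruh; the point of the argument is that the noncollapsing of the universal cover is a workable substitute for the missing upper sectional bound. So first I would show that the universal cover is close to Euclidean space. Rescale $g$ to diameter $1$, so $\Ric\ge-\eps^2$ with $\eps\to0$; the hypothesis $\vol(B_1(\tilde p))\ge v$ together with Bishop--Gromov upgrades to a uniform Euclidean lower bound $\vol(B_r(\tilde p))\ge v'r^n$ for all $r\le R_i\to\infty$ in the universal cover, while $\Ric\ge-\eps^2$ gives the matching upper bound. Passing to an equivariant pointed Gromov--Hausdorff limit $(\tilde M_i,\tilde p_i,\pi_1(M_i))\to(Y,y,G)$, the space $Y$ is a noncollapsed limit of manifolds with almost nonnegative Ricci curvature, carries a cocompact action of the closed group $G$, and has two-sided Euclidean volume growth with density bounded away from $0$. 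By the splitting theorem for noncollapsed Ricci limits and a Cheeger--Gromoll-type argument using the cocompact action (the analogue, in the Ricci setting, of the reasoning in Lemma~\ref{lem-bilip-almost-split}), $Y$ splits isometrically as $\R^s\times(\text{compact})$, and the Euclidean volume growth forces the compact factor to be a point, so $Y=\R^n$. Equivalently, the balls $B_{R_i}(\tilde p_i)$ in $(\tilde M_i,g_i)$ are $\vk(\eps_i|n,v)$-close to $B_{R_i}(0)\subset\R^n$ with $R_i\to\infty$; in particular every unit ball in the universal cover has almost Euclidean isoperimetric constant.

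Next I would invoke Perelman's pseudo-locality theorem. The Ricci flow $g(t)$ exists for a short time on the closed manifold $M$ and lifts equivariantly to a flow $\tilde g(t)$ on $\tilde M$. Since every unit ball of $(\tilde M,g)$ has almost Euclidean isoperimetric constant and $\Ric_g\ge-1$ (so scalar curvature $\ge-n$), pseudo-locality yields universal constants $t_0=t_0(n,v)>0$ and $C_0=C_0(n,v)$ such that $|\Rm_{\tilde g(t)}|\le C_0/t$ for $t\in(0,t_0]$ on a fixed smaller ball around $\tilde p$; since one may center this ball at any point of $\tilde M$ and curvature is local, the bound holds everywhere on $\tilde M$, hence $|\Rm_{g(t)}|\le C_0/t$ on $M$. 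In particular $g(t_0)$ has a two-sided sectional curvature bound $|\sec_{g(t_0)}|\le C_0'/t_0$.

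The crux is to show the diameter stays small, i.e. $\diam(M,g(t_0))\le\eps_G(n)\sqrt{t_0/C_0'}$ with $\eps_G(n)$ the constant in Gromov's theorem; this is the step suggested to the author by Bamler, and I expect it to be the main obstacle. The naive distance-distortion estimate $d_{t_0}\le d_s\,(t_0/s)^{(n-1)C_0}$ degenerates as $s\to0$, so one must genuinely use that the flow emerges from an almost flat configuration. The ingredients I would use are: (i) the lower scalar curvature bound $R_{g(0)}\ge-n$ is preserved, up to the ODE $\phi'=\tfrac2n\phi^2$, by the maximum principle, so $R_{g(t)}\ge-n$ and hence $\partial_t\vol(M,g(t))=-\int_M R\le n\,\vol(M,g(t))$, giving $\vol(M,g(t_0))\le e^{nt_0}\vol(M,g(0))\to0$; (ii) by the pseudo-locality curvature bound, the (integrable) distance-shrinking estimate, and uniform noncollapsing, the flows $(\tilde M_i,\tilde g_i(t),\tilde p_i)$, rescaled to diameter one, subconverge by Hamilton compactness to a complete noncollapsed Ricci flow on $\R^n$ whose time slices converge to the flat metric as $t\to0^+$, which by the uniqueness of Ricci flow coming out of flat $\R^n$ must be the static flat flow; (iii) since $\pi_1(M_i)$ acts isometrically on each $(\tilde M_i,\tilde g_i(t))$ and all these are uniformly close to the flat structure on $\R^n$ for $t\in[0,t_0]$, the limiting group action is independent of $t$, so $\diam(M_i,g_i(t_0))=\diam\big(\tilde M_i/\pi_1(M_i),\tilde g_i(t_0)\big)$ tends to the same limit as $\diam(M_i,g_i(0))$, namely $0$.

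Finally, rescaling $g(t_0)$ by $C_0'/t_0$ produces a metric with $|\sec|\le1$ and $\diam\le\eps_G(n)$, so by Gromov's Almost Flat Manifolds Theorem \cite{Gr-almost-flat} and Ruh's refinement \cite{Ruh-almost-flat}, $M$ is diffeomorphic to an infranilmanifold. I would emphasize that this argument outputs, for the given $M$, an explicit almost flat metric, namely the rescaled $g(t_0)$; this is why the \emph{proof} rather than the mere statement is needed in the application, where one runs the same flow on the warped product $E=M\times_{e^{-f/q}}T^q_\eps$ (whose universal cover is noncollapsed by the Asphericity Theorem~\ref{asphericity-thm}), and where the flow preserves the warped-product structure and therefore descends to an almost flat metric on $M$ itself.
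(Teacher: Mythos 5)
Your overall strategy --- run the Ricci flow, use Perelman's pseudolocality to obtain $|\Rm|\le C_0/t$, and apply Gromov--Ruh to the time-$t_0$ metric --- is the same as the paper's (Theorem~\ref{thm:almost-flat-Ricci-flow}). However, your diameter bound, the ``crux'' you correctly flag, has a genuine gap. In step~(ii) you assert that the limits of the universal-cover flows must be the static flat flow ``by the uniqueness of Ricci flow coming out of flat $\R^n$.'' This is not a citable theorem here: the pseudolocality bound $|\Rm|\le C_0/t$ does not prevent curvature from blowing up as $t\to0^+$, and the initial slice of the limit flow is identified only as a Gromov--Hausdorff limit, so what you need is a uniqueness theorem for Ricci flow with singular initial data, which is delicate and in general open. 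Step~(iii) is also unjustified: Cheeger--Gromov convergence does give a $t$-independent limit deck group on $(0,t_0]$, but it gives no identification with the equivariant-GH limit at $t=0$ without exactly the uniform control as $t\to0^+$ that is missing. (Moreover, after your normalization $\diam(M,g(0))=1$, this diameter cannot ``tend to $0$''; the claim that $\diam(M,g(t_0))$ stays bounded is what has to be proved, not something that follows from the set-up.)

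The paper's diameter bound avoids compactness and uniqueness entirely and is worth internalizing. Working on the universal cover of $E$, Hamilton's one-sided distance estimate integrated against $|\Rm|\le 1/t$ gives $d_1(x,y)\ge d_0(x,y)-c_2(N)$; since the Asphericity Theorem supplies a topological sphere $\hat S_l(x)$ near the metric sphere $S^0_l(x)$ separating $\tilde E$, the lower bound on $d_1$ forces $B^1_{0.8l}(x)\subset B^0_{1.1l}(x)$. The preserved bound $\Scal\ge-1$ gives $\vol_1(V)\le c_3(N)$ for the fixed region $V=B^0_{10l}(p)$, while the $\kappa$-noncollapsing conclusion of pseudolocality gives $\vol_1(B^1_1(x))\ge c_1(N)$; a packing argument then bounds $\diam_1(B^0_l(p))$, and hence $\diam_1(E)$, by a constant depending only on $N$. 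Your ingredient~(i) gestures at the volume bound but you never close the packing argument, and the sphere-separation step --- which is essential because the noncollapsing estimate lives on $\tilde E$ and cannot simply be pushed down to $E$, where deck translates of a time-$1$ ball overlap --- is omitted entirely.
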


 For reader's convenience we present a direct short proof not relying on Theorem~\ref{noncol-ric-alm-flat}. It uses a somewhat different argument for the crucial upper diameter bound than that in \cite[Lemma 1.11]{Huang-Kong-Rong-Xu} which in turn relied on  ~\cite[Lemma 2.10]{Chen-Rong-Xu}.

Our proof also \emph{does not} use the main result of the previous section (that the Almost Flat Manifolds Theorem holds up to homeomorphism). 

    \begin{theorem}\label{thm:almost-flat-Ricci-flow}
For any integer $N>1$ there exist $\eps_0=\eps_0(N), c(N)>0$ such that the following holds.

For any $0<\eps<\eps_0$ if $(M^n,g,e^{-f} \cdot \mathcal H_n)$ is a weighted closed Riemannian manifold with  $\sec\le\eps, \diam\le \eps, n<N $ and $\Ric_{f,N}\ge -\eps$ then $(E=M\times _{e^{-f/q}}T^q_\eps, h)$ where $q=N-n$ admits a Ricci flow $h_t$ for $0\le t\le 1$ such that
$\diam (E, h_1)\le c(N)$ and $|\Rm( (E, h_1)|\le {\vk(\eps|N)}$. In particular, when $\eps$ is sufficiently small $(E, h_1)$ (and hence $(M,g_1)$) is almost flat in Gromov's sense. In particular $M$ is diffeomorphic to an infranilmanifold.

    \end{theorem}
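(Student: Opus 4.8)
The plan is to run the Ricci flow on $(E,h)$ for time $1$ and apply pseudolocality \emph{on the universal cover $\tilde E$ rather than on $E$ itself}: $E$ collapses at time $0$ while $\tilde E$ does not, so this is where one can tame the initial curvature of $E$, which is unbounded in both directions (recall $\sec_M$ has no lower bound in general). First I would record the relevant features of $E$ from the previous section: $\dim E=N$ and $q=N-n\ge1$; $(E,h)$ is closed with $\diam(E,h)\le2\eps$ and $\Ric_E\ge-\vk(\eps|N)$; and $E\cong M\times T^q$ carries an isometric $T^q$-action with $h=g+u^2\,ds^2$, where $u=e^{-f/q}$ satisfies $1-\vk(\eps|N)\le u\le1$ and $|\nabla u|\le\vk(\eps|N)$ by \eqref{eq-almost-const}. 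Its universal cover is $\tilde E=\tilde M\times_u\R^q$. By the Asphericity Theorem the balls $B_{R(\eps)}(\tilde p)\subset\tilde M$ are $\vk(\eps|N)$-Lipschitz--Hausdorff close to Euclidean $R(\eps)$-balls with $R(\eps)\to\infty$, and since warping by a function $\vk(\eps)$-close to $1$ is a $(1+\vk(\eps))$-biLipschitz perturbation on bounded sets, the same holds for $B_{R(\eps)}(\tilde p)\subset\tilde E$ relative to $\R^N$. In particular, for any fixed $L$ and all $\eps$ small enough that $R(\eps)\ge L$, one has $\vol_{\tilde E}B_L(\tilde p)\ge(1-\vk(\eps|N))\,\omega_N L^N$ for every $\tilde p\in\tilde E$.

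Next I would set up the flow. Short-time existence gives a smooth Ricci flow $h_t$ on a maximal interval $[0,T_{\max})$ with bounded curvature on compact subintervals, lifting to a complete, bounded-curvature flow $\tilde h_t$ on $\tilde E$. The key input is Perelman's pseudolocality theorem in the form whose only hypothesis at $t=0$ is an almost-Euclidean volume ratio on balls of some scale $L$ (the classical version, requiring $|\Rm|\le L^{-2}$ at $t=0$, is unavailable since $\sec_E$ is unbounded below): for each $\alpha>0$ there are $\delta(\alpha),\eps_P(\alpha)>0$ so that if $\vk(\eps|N)<\delta(\alpha)$ and $L=L(N,\alpha)$ is large enough that $(\eps_P L)^2\ge1$ (and $L\le R(\eps)$, valid for $\eps$ small), then $|\Rm(\tilde h_t)|\le\alpha/t$ on $\tilde E$, hence on $E$, for all $t\in(0,1]$. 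The standard continuation criterion then forces $T_{\max}>1$, so $h_t$ exists on $[0,1]$ with $|\Rm(h_1)|\le\alpha$; since $\alpha>0$ is arbitrary and the constraint $\vk(\eps|N)<\delta(\alpha)$ holds for all small $\eps$, choosing $\alpha=\alpha(\eps)\to0$ slowly yields $|\Rm(E,h_1)|\le\vk(\eps|N)$.

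The hard part will be the upper diameter bound $\diam(E,h_1)\le c(N)$. The crude estimate is useless: the only general bound on distance growth, $\frac{d}{dt}d_t(x,y)\le(N-1)\sup|\Rm(h_t)|\cdot d_t(x,y)$, combined with $\sup|\Rm(h_t)|\le\alpha/t$, produces the divergent integral $\int_0 dt/t$, precisely in the regime $t\to0$ where $E$ has tiny diameter but enormous curvature. Following a suggestion of Bamler, the idea is to avoid integrating the distance distortion down to $t=0$ and instead work on $\tilde E$: besides the decay $|\Rm(\tilde h_t)|\le\alpha/t$, pseudolocality also provides uniform $\kappa(N)$-non-collapsing of $(\tilde E,\tilde h_t)$ at scale $1$ and, via interior estimates, control of $\tilde h_t$ on balls of a definite size, so that $(\tilde E,\tilde h_t)$ stays $\vk(\eps|N)$-Lipschitz--Hausdorff close to $\R^N$ on a large fixed scale for all $t\in[0,1]$. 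Since $\diam(E,h_t)$ is controlled by the covering radius of the orbit $\pi_1(E)\cdot\tilde p$ in $(\tilde E,\tilde h_t)$, and this orbit stays uniformly relatively dense in the nearly-Euclidean, non-collapsed space $(\tilde E,\tilde h_t)$ because $E$ is compact of controlled volume, one obtains $\diam(E,h_1)\le c(N)$. (Alternatively, as in Huang--Kong--Rong--Xu, one may invoke the sharp distance-distortion estimate under pseudolocality of Chen--Rong--Xu, whose time integrand $\sqrt{|\Rm(h_t)|}\sim t^{-1/2}$ is integrable.) I expect this step to carry the essential difficulty.

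Finally, the Ricci flow commutes with the isometric $T^q$-action, so it preserves the warped-product structure: $h_t=g_t+u_t^2\,ds^2$ for a flow $(g_t,u_t)$ on $M$ (cf. Lott's construction \cite{lobaem}). The sectional curvature of a horizontal $2$-plane in a warped product equals that of the base, so $|\sec(M,g_1)|\le|\Rm(E,h_1)|\le\vk(\eps|N)$, and $\diam(M,g_1)\le\diam(E,h_1)\le c(N)$ since projection onto $M$ is distance non-increasing on the slice $M\times\{\mathrm{pt}\}$. Rescaling $g_1$ by the factor $\vk(\eps|N)$ then produces a metric on $M$ with $|\sec|\le1$ and diameter $\vk(\eps|N)$, hence almost flat in Gromov's sense once $\eps$ is small; by Gromov's Almost Flat Manifolds Theorem together with Ruh's refinement, $M$ is diffeomorphic to an infranilmanifold. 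The same rescaling of $h_1$ shows $(E,h_1)$ is almost flat in Gromov's sense, which with the bounds of the two previous paragraphs completes the proof.
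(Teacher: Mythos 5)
Your overall plan---run the Ricci flow on the warped product $E=M\times_{u}T^q_\eps$, use Lott's observation that the warped-product structure is preserved, apply pseudolocality on the universal cover $\tilde E$ where Asphericity gives almost-Euclidean geometry on a large scale, conclude curvature smallness at time $1$, and then separately get a diameter bound---is exactly the paper's strategy, down to the attribution of the diameter bound to Bamler. Your treatment of short-time existence, the $|\Rm(h_t)|\le\alpha/t$ decay, the continuation to $t=1$, the conclusion $|\Rm(h_1)|\le\vk(\eps|N)$, and the final rescaling/passage from $E$ to $M$ all match. (The paper's Theorem~\ref{per:pseudo-locality} assumes an almost-Euclidean isoperimetric inequality plus a scalar curvature lower bound rather than an almost-Euclidean volume ratio, but both versions of pseudolocality are standard and the Lipschitz--Hausdorff closeness supplies either hypothesis, so that discrepancy is cosmetic.)

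The genuine gap is the diameter bound, which you correctly flag as the hard part but do not actually prove. Your sketch---``control of $\tilde h_t$ on balls of definite size'' so that $(\tilde E,\tilde h_t)$ stays Lipschitz--Hausdorff close to $\R^N$ for \emph{all} $t\in[0,1]$, and the orbit of $\pi_1(E)$ ``stays uniformly relatively dense because $E$ is compact of controlled volume''---asserts but does not justify uniform-in-$t$ metric control, which is precisely what is at issue: near $t=0$ the curvature bound $|\Rm|\le\alpha/t$ is useless, distances could a priori change drastically, and $\vol(E,h_0)$ is tiny rather than ``controlled.'' The paper's actual mechanism is different and more concrete: it uses Hamilton's one-sided estimate (Theorem~\ref{ham-distort}) with $|\Rm(h_t)|\le1/t$ to get the integrable bound $\frac{d}{dt}d_t\ge -c(N)/\sqrt t$, hence $d_1(x,y)\ge d_0(x,y)-c_2(N)$ (distances cannot \emph{shrink} too much). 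Combined with the separating sphere in $\tilde E$ from the Asphericity Theorem, this yields the ball inclusion $B^1_{0.8l}(x)\subset B^0_{1.1l}(x)$. One then bounds $\vol_1$ of a fixed ball from above via the scalar-curvature evolution of the volume form and Bishop--Gromov at $t=0$, bounds $\vol_1$ of unit balls from below via the $\kappa$-noncollapsing in pseudolocality, and runs a packing argument to bound the number of disjoint $0.8l$-balls in $U=B_l(\tilde p)$; since $\pi(U)=E$, this bounds $\diam(E,h_1)$. None of these ingredients (Hamilton's lower distortion estimate, the ball inclusion, the $e^t$ volume growth and Bishop--Gromov input, the packing) appear in your write-up, so the diameter step is a placeholder rather than a proof. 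Your alternative suggestion---invoke the Chen--Rong--Xu distance-distortion lemma as in Huang--Kong--Rong--Xu---is legitimate but is precisely the route the paper deliberately avoids in favor of a self-contained argument.
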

    
    We will need Perelman's   Pseudolocality Theorem. Perelman proved it for closed manifolds in \cite{Per-entropy}. It was later shown to hold for complete manifolds in \cite{chau-pseudolocality} (cf. \cite{chow-ricci-III})
    \begin{theorem}[Perelman's Pseudolocality]\label{per:pseudo-locality}
    Given natural $n>1$ for any $A>0$ there exist $\delta_0>0,\eps_0>0$ such that the following holds. Suppose we have a smooth solution to the Ricci flow $(M^ng_t),     0\le t\le (\eps r)^2$ 
     a complete solution of the Ricci flow with bounded curvature, where $0<\eps<\eps_0, r>0$. Let $x_0\in M$. Suppose scalar curvature satisfies
     
     \[
     S(x,0)>-\frac{1}{r^2} \quad \text{  for any } x\in B^{g_0}_r(x_0)
     \]
     
     and  the $\delta_0$-almost Euclidean isoperimetric inequality holds in $B^{g_0}_r(x_0)$.
     
     Then we have
     
     \[
     |\Rm(x,t)|\le \frac A t +\frac{1}{(\eps_0 r)^2}
     \]
     
     for all $x\in B^{g_t}_{\eps_0 r}(x_0)$ and $0<t< (\eps r)^2$.
    
    Furthermore for all $x\in B^{g_t}_{\eps_0 r}(x_0)$ and $0<t< (\eps r)^2$ and any $\rho<\sqrt t$ it holds that
    
    \[
    \vol_t B^t_\rho(x)\ge c(n)\rho^n\quad \text{ where $c(n)$ is some universal constant.}
    \]
    \end{theorem}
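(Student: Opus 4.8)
The plan is to reproduce Perelman's original proof by contradiction (Section~10 of \cite{Per-entropy}), checking along the way that every functional used is localized near the base point so that the argument also covers complete bounded‑curvature solutions as in \cite{chau-pseudolocality}. By parabolic rescaling we may take $r=1$. Suppose the assertion fails for some fixed $A>0$: then there are $\delta_\alpha,\eps_\alpha\to 0$, complete bounded‑curvature solutions $(M_\alpha,g_\alpha(t))$ on $[0,\eps_\alpha^2]$, and points $x_\alpha$ with scalar curvature $S(\cdot,0)\ge -1$ on $B_1^{g_0}(x_\alpha)$ and the $\delta_\alpha$‑almost Euclidean isoperimetric inequality valid in $B_1^{g_0}(x_\alpha)$, yet space‑time points at which $|\Rm|$ exceeds $A/t+\eps_\alpha^{-2}$ while being arbitrarily close (in space at the corresponding time, and in time) to $(x_\alpha,0)$.

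The first step is a point‑selection argument: maximizing $|\Rm|$ against the remaining backward time and the distance to $x_\alpha$, exactly as in Perelman's proof, one produces $(\bar x_\alpha,\bar t_\alpha)$ with $\bar t_\alpha\in(0,\eps_\alpha^2]$, $Q_\alpha:=|\Rm|(\bar x_\alpha,\bar t_\alpha)\to\infty$, $Q_\alpha\bar t_\alpha\ge A$, with $d_{\bar t_\alpha}(\bar x_\alpha,x_\alpha)$ bounded by a controlled multiple of $\eps_\alpha$ (so $\bar x_\alpha$ stays deep inside $B_1^{g_0}(x_\alpha)$), and with $|\Rm|\le 4Q_\alpha$ on backward parabolic neighbourhoods $P\big(\bar x_\alpha,\bar t_\alpha;\, cAQ_\alpha^{-1/2},\,-cAQ_\alpha^{-1}\big)$; since $A$ is fixed these neighbourhoods, rescaled by $Q_\alpha$, exhaust space‑time, which is precisely why $\delta_0,\eps_0$ are allowed to depend on $A$. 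Now set $\tilde g_\alpha(t)=Q_\alpha\,g_\alpha(\bar t_\alpha+Q_\alpha^{-1}t)$ for $t\in[-Q_\alpha\bar t_\alpha,0]$: then $|\widetilde{\Rm}|(\bar x_\alpha,0)=1$, curvature is uniformly bounded on fixed parabolic balls, and scalar curvature is $\ge -Q_\alpha^{-1}\to 0$. The hypothesis enters here: the $\delta_\alpha$‑almost Euclidean isoperimetric inequality, being scale invariant, holds on balls of radius $Q_\alpha^{1/2}\to\infty$ at the initial time of $\tilde g_\alpha$ with error $\delta_\alpha\to 0$, and via the associated logarithmic Sobolev inequality this gives Perelman's $\kappa$‑noncollapsing, hence a uniform injectivity‑radius lower bound at $(\bar x_\alpha,0)$. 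By Hamilton's compactness theorem a subsequence converges in $C^\infty_{\mathrm{loc}}$ to a complete bounded‑curvature Ricci flow $(M_\infty,g_\infty(t),x_\infty)$, defined on $(-\infty,0]$ or on a finite interval $[-T,0]$ whose initial slice carries the \emph{sharp} Euclidean isoperimetric inequality, with $R_{g_\infty}\ge 0$ everywhere and $|\Rm_{g_\infty}|(x_\infty,0)=1$.

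The contradiction is that $(M_\infty,g_\infty)$ must be flat. Here one uses monotonicity: Perelman's reduced volume $\widetilde V(\tau)$, based at a point of the limit slightly before time $0$, is non‑increasing in $\tau$ and bounded above by $(4\pi)^{n/2}$, tends to $(4\pi)^{n/2}$ as $\tau\to 0$, and the persistence of the sharp Euclidean isoperimetric inequality — equivalently the sharp logarithmic Sobolev inequality, $\mu(g_\infty(t),\cdot)\ge 0$ — at the initial time forces $\widetilde V(\tau)$ to remain $(4\pi)^{n/2}$ all the way back; equality in the monotonicity forces $g_\infty$ to be the flat Gaussian soliton, i.e. $(M_\infty,g_\infty)=(\R^n,\text{flat})$, contradicting $|\Rm_{g_\infty}|(x_\infty,0)=1$. (In the ancient case one argues the same way via the asymptotic shrinker, or directly that a $\kappa$‑noncollapsed ancient flow with $\mu\equiv 0$ is flat.) This establishes the curvature bound. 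The volume estimate $\vol_t B^t_\rho(x)\ge c(n)\rho^n$ for $\rho<\sqrt t$ then follows from the just‑proved curvature bound together with $\kappa$‑noncollapsing: the near‑Euclidean isoperimetric inequality at time $0$ makes $\mu(g_0,\cdot)$ close to $0$, hence by monotonicity $\mu(g_t,\cdot)\ge -o(1)$, which in the presence of a local curvature bound yields a definite Euclidean‑type lower volume bound at scale $\rho$.

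The delicate point — the step I expect to be the main obstacle — is the rigidity: extracting exact flatness of the blow‑up limit from a merely \emph{near‑Euclidean} and merely \emph{local} isoperimetric inequality. One must check that the isoperimetric ratio genuinely converges to the sharp Euclidean constant on the growing rescaled balls, translate this into a sharp logarithmic Sobolev / $\mu$‑entropy statement at the correct scale on the (possibly non‑compact) limit, and invoke the equality case of the relevant monotone quantity. By contrast the non‑compactness and unbounded‑curvature issues at spatial infinity are comparatively routine, since the point‑selection confines all relevant curvature and the monotone functionals are dominated by their values near the base point.
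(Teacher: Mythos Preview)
The paper does not prove this theorem: it is quoted as a known result, with attribution to Perelman \cite{Per-entropy} for the closed case and to \cite{chau-pseudolocality} (cf.\ \cite{chow-ricci-III}) for the extension to complete solutions with bounded curvature. There is therefore nothing in the paper to compare your argument against; the author simply invokes the statement as a black box in the proof of Theorem~\ref{thm:almost-flat-Ricci-flow}.

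That said, a brief comment on your sketch. The overall architecture --- contradiction, point selection, parabolic rescaling, compactness, rigidity --- is the correct one. However, Perelman's actual mechanism for the contradiction is not quite the reduced-volume rigidity you describe: he works with the conjugate heat kernel $u$ based at $(\bar x_\alpha,\bar t_\alpha)$ and the associated quantity $v=[\tau(2\Delta f-|\nabla f|^2+R)+f-n]u$, showing via the differential Harnack inequality that $\int v\,dV$ is nonpositive and monotone, while the almost-Euclidean isoperimetric (equivalently log-Sobolev) inequality at $t=0$ forces $\int v\,dV$ to be close to zero there; a careful cutoff and heat-kernel estimate then gives the contradiction directly, without passing to a smooth limit flow. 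Your version, which extracts a limit and then appeals to equality in reduced-volume monotonicity, is morally equivalent but you should be careful in the ancient case $Q_\alpha\bar t_\alpha\to\infty$: there is then no ``initial slice'' of the limit on which to place the sharp isoperimetric inequality, so the rigidity step as you have written it does not directly apply. This is exactly the subtlety Perelman's conjugate-heat-kernel argument is designed to avoid.
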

    We will also need the following distance distortion estimate due to Hamilton
    
    \begin{theorem}\label{ham-distort}\cite[Theorem 17.2]{Hamilton-II}\cite[Corollary 27.16]{Lott-Kleiner}
    
    Let $(M^n,g(t))$ be a Ricci flow such that  $g(t_0)$ is complete and satisfies $\Ric_{t_0}\le K(n-1)$ for some $K\ge 0$. Then for any $x,y\in M$ we have
    
    \[
    \frac{ d}{dt}|_{t=t_0-}\,d_t(x,y)\ge -c(n)\sqrt K
    \]
    
    for some universal constant $c(n)$.
    \end{theorem}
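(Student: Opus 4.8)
The plan is to run the classical first- and second-variation argument for arc length under Ricci flow; the only genuine difficulty is obtaining a lower bound on the distance derivative that does \emph{not} deteriorate as $d(x,y)\to\infty$. Throughout we assume, as is implicit in the statement, that $g(t)$ is a smooth solution with bounded curvature on some interval $[t_0-\sigma,t_0]$, so that completeness persists and $t\mapsto d_t(x,y)$ is locally Lipschitz near $t_0$; in all applications (e.g. Theorem~\ref{thm:almost-flat-Ricci-flow}, where $M$ is closed) this is automatic. Fix $x\neq y$, put $L=d_{t_0}(x,y)$, and choose a unit-speed $g(t_0)$-minimizing geodesic $\gamma\co[0,L]\to M$ from $x$ to $y$, which exists since $g(t_0)$ is complete.

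First I would treat the first variation. Since $\partial_t g=-2\Ric$ and $\gamma$ is parametrized by $g(t_0)$-arc length,
\[
\frac{d}{dt}\Big|_{t=t_0}L_{g(t)}(\gamma)=-\int_0^L\Ric_{t_0}\bigl(\gamma',\gamma'\bigr)\,ds .
\]
Combining $d_t(x,y)\le L_{g(t)}(\gamma)$ for $t$ near $t_0$ with the equality $d_{t_0}(x,y)=L_{g(t_0)}(\gamma)$, dividing $d_{t_0}(x,y)-d_t(x,y)\ge L_{g(t_0)}(\gamma)-L_{g(t)}(\gamma)$ by $t_0-t>0$, and letting $t\to t_0^-$, one obtains
\[
\frac{d}{dt}\Big|_{t=t_0-}d_t(x,y)\ \ge\ -\int_0^L\Ric_{t_0}\bigl(\gamma',\gamma'\bigr)\,ds ,
\]
where the left-hand side is read as the lower left Dini derivative (which is what is used in applications; since $t\mapsto d_t(x,y)$ is Lipschitz the one-sided derivative exists a.e. and the bound holds there). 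It therefore suffices to prove the $L$-independent estimate $\int_0^L\Ric_{t_0}(\gamma',\gamma')\,ds\le c(n)\sqrt K$.

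The main step — and the only real obstacle, since the trivial bound $\int_0^L\Ric_{t_0}(\gamma',\gamma')\,ds\le(n-1)KL$ is useless for large $L$ — is to improve this using the second variation of length. Extend $\gamma'$ to a parallel orthonormal frame $E_1=\gamma',E_2,\dots,E_n$ along $\gamma$, and for a parameter $0<r_0\le L/2$ take the piecewise-linear cutoff $\phi\co[0,L]\to[0,1]$ with $\phi\equiv1$ on $[r_0,L-r_0]$, $\phi(s)=s/r_0$ on $[0,r_0]$ and $\phi(s)=(L-s)/r_0$ on $[L-r_0,L]$, so $\phi$ vanishes at the endpoints and $\int_0^L|\phi'|^2\,ds=2/r_0$. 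Since $\gamma$ is minimizing, the index form of the variation field $\phi E_a$ is nonnegative for each $a=2,\dots,n$; summing over $a$ and using $\sum_{a=2}^n\langle R(E_a,\gamma')\gamma',E_a\rangle=\Ric_{t_0}(\gamma',\gamma')$ gives
\[
\int_0^L\phi^2\,\Ric_{t_0}(\gamma',\gamma')\,ds\ \le\ (n-1)\int_0^L|\phi'|^2\,ds=\frac{2(n-1)}{r_0}.
\]
Writing $\int_0^L\Ric_{t_0}(\gamma',\gamma')\,ds=\int_0^L\phi^2\Ric_{t_0}(\gamma',\gamma')\,ds+\int_{[0,r_0]\cup[L-r_0,L]}(1-\phi^2)\Ric_{t_0}(\gamma',\gamma')\,ds$ and bounding the last integral by $(n-1)K\cdot 2r_0$ (since $\Ric_{t_0}(\gamma',\gamma')\le(n-1)K$ and $0\le1-\phi^2\le1$ on a set of measure $\le 2r_0$), we get
\[
\int_0^L\Ric_{t_0}(\gamma',\gamma')\,ds\ \le\ 2(n-1)\Bigl(\frac{1}{r_0}+Kr_0\Bigr).
\]
If $K>0$ and $L\ge 2/\sqrt K$, taking $r_0=1/\sqrt K$ yields $\le 4(n-1)\sqrt K$; if $K>0$ and $L<2/\sqrt K$ the trivial bound already gives $(n-1)KL\le 2(n-1)\sqrt K$; and if $K=0$ the hypothesis $\Ric_{t_0}\le0$ makes $\int_0^L\Ric_{t_0}(\gamma',\gamma')\,ds\le0$ outright. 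In every case $\int_0^L\Ric_{t_0}(\gamma',\gamma')\,ds\le c(n)\sqrt K$ with $c(n)=4(n-1)$ (a sharper constant can be extracted from a smoother cutoff, but is not needed here), and combined with the displayed lower bound for the distance derivative this proves the theorem.
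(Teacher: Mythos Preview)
Your proof is correct and is precisely the standard argument due to Hamilton; note, however, that the paper does not supply its own proof of this theorem but simply quotes it from \cite[Theorem 17.2]{Hamilton-II} and \cite[Corollary 27.16]{Lott-Kleiner}. Your write-up reproduces that classical proof: the first-variation inequality for $d_t$ along a minimizing geodesic, followed by the index-form estimate with a piecewise-linear cutoff to bound $\int_0^L\Ric(\gamma',\gamma')\,ds$ independently of $L$, optimized at $r_0=1/\sqrt{K}$.
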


    \begin{proof}[Proof of Theorem~\ref{thm:almost-flat-Ricci-flow}]

    Let $u=e^{-f/q}$.  As in Section~\ref{sec:main-homeo} we consider the warped product $(E,h)=M\times_u T_\eps$. Consider the Ricci flow $h_t$ on $E$ given by $\frac{\partial h_t}{\partial t}=-2\Ric_{h_t}$.

    As was shown by Lott $h_t$~\cite{lott-dim-red,Lott-optimal} remains a warped product metric $(E,h)=(M, g_t)\times_{e^{-f_t/q}} T_\eps$ where $g_t, f_t$ evolve by  a coupled system
    
    \begin{equation}
    \begin{cases}
    \frac{\partial g_t}{\partial t}=-2\Ric_{g_t,f_t, N}\\
     \frac{\partial f_t}{\partial t}=\Delta_tf_t-|\nabla f_t|_t^2
    \end{cases}
    \end{equation}
    
    We do not make direct use of this system other than using the fact that $h_t$ remains a warped product metric for all $t$.

    Next, as in Section~\ref{sec:main-homeo}, rescale $u$ to have $\max u=1$.  Then  $u$ is almost constant in $C^1$ by \eqref{eq-almost-const}.  Combined with the Asphericity Theorem~\ref{asphericity-thm} this implies that 
 there is $R(\eps)\to \infty$ such that  for \emph{any} $\tilde p\in \tilde  E$ the ball $B_R(\tilde p)$ is $\vk(\eps)$-Lipschitz-Hausdorff close to the corresponding ball in $\R^N$. In particular its isoperimetric constant is $\vk(\eps)$-close to the Euclidean one. 
 Fix $0<A\le 1$.   Let $\eps_0(A),\delta_0(A)$ be provided by the Pseudolocality Theorem.
 We have that for all small $\eps$ any ball $B_{R(\eps)}(x)$ in $\tilde E$ satisfies the $\delta_0(A)$-Euclidean isoperimetric inequality and $Scal_{\tilde E}\ge -\vk(\eps)$. By possibly making $R(\eps)$ smaller but still very slowly going to infinity as $\eps\to 0$ we can assume that $\frac{1}{R(\eps)^2}>\vk(\eps)$.

  Therefore Pseudolocality Theorem applies with $r=R(\eps)$ if $\eps$ is small enough.
  
   Let $T_{\eps,A}= (\eps_0(A) R(\eps))^2$.  Note that for any fixed $A$ we have that $T_{\eps,A}\to\infty$ as $\eps\to 0$.

   Let $T$ be the maximal time interval of existence of $h_t$. Suppose $T< T_{\eps,A}$.
  
By pseudo-locality  for any $x\in \tilde E$  and $0<t<T$ we have that

      \begin{equation}\label{eq:pseudo-loc}
     |\Rm(x,t)|\le \frac A t +\frac{1}{T_{\eps,A}}
     \end{equation}
Since this holds for arbitrary $x\in E$ we have that $\limsup_{t\to T_{-}}\sup_{\tilde E}|\Rm(x,t)|<\infty$. This implies that the Ricci flow $h_t$ can be extended past $T$. Therefore $T\ge  T_{\eps,A}$  for all small $\eps$. 

Again by pseudolocality we have that for any $0<t<T_{\eps,A}$ and any $\rho\le\sqrt t$ it holds that

\begin{equation}\label{kappa-noncol}
\vol_t(B^t_\rho(x))\ge c_1(N)\rho^N
\end{equation}
 
 For any fixed $A$ we have that for all small $\eps$ it holds that $\frac{1}{T_{\eps,A}}\le A$.

By \eqref{eq:pseudo-loc} we have that

       \begin{equation}\label{eq-almost-flat}
     |\Rm(x,1)|\le 2A\quad \text{ for any } x\in \tilde E \quad \text{ and all small } \eps.
     \end{equation}
 
 and 
 
        \begin{equation}
     |\Rm(x,t)|\le \frac{1}{2t}+\frac 1 2\le \frac{1}{t} \quad \text{ for any } x\in \tilde E \quad \text{  any } 0<t\le 1 \quad  \text{ and all small } \eps.
     \end{equation}
 
 Applying Hamilton's distance distortion estimate (Theorem~\ref{ham-distort}) this gives that for any $x\in \tilde E$ and any $0<t\le 1$ and all small $\eps$ it holds
 
  \[
    \frac{ d}{dt}|_{t=t_0-}\,d_t(x,y)\ge -\frac{c_2(N)}{\sqrt t}
    \]

 Integrating this in $t$  from $0$ to $1$ gives
 
 \begin{equation}\label{dist-est-2}
d_1(x,y)\ge d_0(x,y)-c_2(N)
 \end{equation}
 
 for any $x,y\in \tilde E$ provided $\eps$ is small enough.  Without loss of generality $c_2(N)\ge 1$.

 Let us denote $10c_2(N)$ by $l$. Fix $A=1$. Then for all small $\eps$ we have that $T_1,\eps> 100$. Therefore by \eqref{kappa-noncol}  for any $x\in \tilde E$ and $t=1$ it holds that
 \begin{equation}\label{vol-ball-below}
 \vol_1(B^1_1(x))\ge c_1(N).
\end{equation}

Let $U=B_{l}(p)$ for some $p\in \tilde E$.  Recall that $\diam E<2\eps$. Since $r\gg 2\eps>\diam E$ we have that $\pi(U)=E$ where $\pi\co \tilde E\to E$ is the universal covering map. Let $V=B_{10l}(p)$. Since $B_{R(\eps)}(p)$ is $\vk(\eps)$-Lipschitz-Hausdorff close to $B_{R(\eps)}(0)\subset \R^N$, for any $x\in V$ there is a topological $(N-1)$ sphere $\hat S_l(x)$ which is  $\vk(\eps)$-close to the metric sphere $S_l(x)$ and which separates $\tilde E$ into two connected components.

For any $y\in \hat S_l(x)$ by the distortion estimate \eqref{dist-est-2} it holds that $d_1(x,y)\ge l-\vk(\eps)-c_2(n)> 0.8l$. Since  $S_l(x)$ separates $\tilde E$ this implies that $B^1_{0.8l}(x)$ is completely contained in the bounded component of 
$\tilde E\backslash S_l(x)$ which is contained in $B^0_{1.1l}$.
Thus, for any $x\in V$ we have

\begin{equation}\label{ball-incl}
B^1_{0.8l}(x)\subset B^0_{1.1l}(x)
\end{equation}

Recall that under the Ricci flow $d\vol_t'=-\Scal_td\vol_t$. Since the minimum of scalar curvature on $E$ does not decrease along the Ricci flow we have that $\Scal(x,t)\ge -1$ for all $x\in\tilde E, t\ge 0$. Hence
$\frac{d}{dt}\vol_t(V)\le \vol_t(V)$ for all $t$ and therefore $\vol_t(V)\le e^t\vol_0(V)$ and in particular $\vol_1(V)\le 3\vol_0(V)$. Since $\Ric^0\ge -1$ by absolute volume comparison at time $0$ we get that $\vol_1(V)\le c_3(N)$ for some universal constant $c_3(N)$. Suppose $\diam_1(U)\ge m\cdot 10 l$ for some integer $m$. Here and in what follows the diameter is measured with respect to the ambient metric.

Since $U$ is connected this implies that $U$ contains $m$ points $x_1,\ldots x_m$ such that the balls $B^1_{0.8l}(x_i), i=1,\ldots m$ are disjoint.

By \eqref{ball-incl} we have that each $B^1_{0.8l}(x_i)$ is contained in  $B^0_{1.1l}(x_i)$ which is contained in $V$.  Therefore $\vol_1(\cup_iB^1_{0.8l}(x_i))\le \vol_1(V)\le c_3(N)$.

On the other hand $0.8l\ge 1$ and therefore $\vol_1(B^1_{0.8l}(x_i))\ge c_1(N)$ by \eqref{vol-ball-below}. Since all these balls are disjoint, combining the last two estimates we get

\[
m\cdot c_1(N)\le \vol_1(\cup_iB^1_{0.8l}(x_i))\le \vol_1(V)\le c_3(N), \quad m\le \frac{c_3(N)}{c_1(N)}
\]

and hence

\[
\diam_1(U)\le ( \frac{c_3(N)}{c_1(N)}+2)10l
\]
Since $\pi(U)=E$ this implies that

\[
\diam_1(E)\le ( \frac{c_3(N)}{c_1(N)}+2)10l
\]
as well. Note that this is a universal constant which does not depend on $\eps$. Combining this with \eqref{eq-almost-flat} we conclude that $E$ (and hence $M$) is almost flat in Gromov's sense for all small $\eps$.
By Gromov's Almost Flat Manifolds Theorem~\cite{Gr-almost-flat,BuKa, Ruh-almost-flat} this implies that $M$ is infranil provided $\eps$ is small enough.

    \end{proof}
    

    In   \cite{Kap-Kell-Ket-19} a structure theory of $\RCD+\CAT$ spaces was developed. In particular it is proved that that if $(X,d,m)$ is $\RCD(\lk.N)$ and $\CAT(\uk)$ with $N<\infty$ then it's a topological manifold with boundary  of dimension $\le N$ and the manifold interior is a smooth $C^1$-manifold  with a $C^0\cap BV$ Riemannian metric. Further a geometric boundary $\bCAT X$ of such space  $X$ was defined and shown to be equal to the manifold boundary $\partial X$. As these results are local they apply in equal measure to spaces  which are $\RCD(\lk,N)+\CBA(\uk)$ with $N<\infty$. By above  when the boundary is empty any such space is a smooth manifold. It is therefore natural to wonder if Theorem \ref{thm:almostflat} holds in this generality. We conjecture that it does.
    \begin{conjecture}
For any $1<N<\infty$ there exists $\eps=\eps(N)$ such that the following holds.

If $(X,d,m)$ is an $\RCD(\eps, N)$ space such that $(X,d)$ is $\CBA(\eps)$  and $\bCAT X=\varnothing$ and $\diam X\le \eps$ then $X$ is diffeomorphic to an infranilmanifold of dimension $\le N$.
    \end{conjecture}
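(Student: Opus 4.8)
The plan is to bootstrap from the smooth case: first reduce, via the structure theory of $\RCD+\CBA$ spaces, to a closed $C^1$-manifold carrying a rough metric; then re-run the asphericity machinery of this paper in that rougher category; and finally smooth the metric equivariantly on the universal cover to land in the hypotheses of the classical Gromov--Ruh theorem. For the reduction, since $\bCAT X=\partial X=\varnothing$, the results of \cite{Kap-Kell-Ket-19} give that $X$ is a closed $C^1$-manifold of some dimension $n$ with a $C^0\cap\BV$ Riemannian metric $g$, and lower semicontinuity of geometric dimension \cite{KitaPOTA} forces $n=\dimg X\le N$. All synthetic hypotheses persist locally: small metric balls are geodesically convex, are $\CAT(\eps)$ by $\CBA(\eps)$ and $\CAT$-globalization \cite{AKP-CAT}, and are $\RCD(-(N-1)\eps,N)$.

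Next I would re-derive the Asphericity Theorem~\ref{asphericity-thm} in this setting. The pseudo-cover construction of Section~\ref{sec:pseudo-covers} uses only that at a manifold point $p$ of a locally $\CAT(\eps)$ space the tangent cone is $\R^n$ and geodesics of length $<\Rk{\eps}$ are unique and vary continuously with their direction; this is enough to pull the metric back along an exponential-type map, obtain $\hat X$ with its pseudo-group $\Gamma$, and carry Lemmas~\ref{lem:pseudo-onto}--\ref{lem:pseudo-iso} over verbatim. Lemma~\ref{lem-bilip-almost-split} still holds: the Splitting Theorem~\ref{splitting-thm} and semicontinuity of geometric dimension produce a Euclidean blow-down of dimension $k\le n$, while the reverse bound $k\ge n$ follows by building an $n$-strainer at $p$ from an orthonormal $n$-tuple of directions in $\Sigma_pX\cong\SS^{n-1}$ and invoking $\CAT(\eps)$-comparison; the almost-linear strainer maps of Lemma~\ref{lem-almost-linear} and Corollary~\ref{cor-almost-riem-subm} use only the first variation formula, which is available in $\CAT$ spaces. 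With these in place the Fibration Theorem and the Induction Theorem~\ref{thm:induction} are purely metric and go through (the smoothing of the strainer maps in the Local Fibration Theorem now yields $C^1$ rather than $C^\infty$ maps, which is harmless). Arguing by contradiction exactly as in the derivation of Theorem~\ref{asphericity-thm}, one concludes that for $\eps<\eps(N)$ the space $X$ is aspherical and every ball of radius $R(\eps)$ in the universal cover $\tilde X$ is $\vk(\eps|N)$-Lipschitz--Hausdorff close to the ball of the same radius in $\R^n$.

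For the final step, Lipschitz--Hausdorff closeness shows, in the $C^1$ (DC) coordinates furnished by the structure theory, that after rescaling $X$ by a suitable factor $\lambda$ --- so that the Euclidean-like scale $\lambda R(\eps)$ of $\tilde X$ is much larger than $1$ while $\lambda\cdot\diam X$ is tiny, which is possible since $R(\eps)$ is large and $\diam X\le\eps$ is small --- the lifted metric tensor on $\tilde X$ is $\vk(\eps)$-close to the Euclidean one on balls of radius $\lambda R(\eps)$. One then smooths $g$ on $\tilde X$ by a canonical, hence $\pi_1(X)$-equivariant, procedure --- for instance a short-time Ricci--DeTurck flow issued from the $C^0\cap\BV$ metric --- so that it descends to a smooth metric $g'$ on $X$; Perelman's pseudolocality (Theorem~\ref{per:pseudo-locality}, \cite{Per-entropy}) applied on the nearly-Euclidean balls of $\tilde X$, whose isoperimetric constants are $\vk(\eps)$-close to the Euclidean one, keeps $|\Rm_{g'}|\le\vk(\eps|N)$ at the ambient scale, while Hamilton's distance-distortion estimate (Theorem~\ref{ham-distort}) keeps the diameter small. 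Hence $(X,g')$ satisfies $|\sec|\le\vk(\eps|N)$ and $\diam\le\vk(\eps|N)$; since a $C^1$-manifold carries a unique compatible smooth structure, its diffeomorphism type is well defined, and the Almost Flat Manifolds Theorem of Gromov and Ruh \cite{Gr-almost-flat,Ruh-almost-flat,BuKa} then shows that $X$ is diffeomorphic to an infranilmanifold --- necessarily of dimension $n\le N$ --- once $\eps$ is small enough.

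The hard part will be this last smoothing step. Unlike in the Ricci-flow section above, the initial metric here is only $C^0\cap\BV$ and the synthetic bounds $\CBA(\eps)$, $\RCD(\eps,N)$ are not preserved by naive mollification, so one must establish short-time existence of an equivariant Ricci-type flow from low-regularity initial data that is almost flat at a definite scale, and verify that the pseudolocality and distance-distortion estimates survive in that generality; this is the technical heart of a complete proof. A secondary point requiring care is that each smooth-manifold ingredient of the Fibration and Induction Theorems --- above all the exponential map and the distance-function strainer maps --- genuinely has a $\CAT(\eps)$ substitute with the same quantitative behaviour. If one only wants a homeomorphism classification the smoothing can be bypassed: asphericity, a Margulis-type lemma, torsion-freeness of $\pi_1(X)$, and the results of Lee--Raymond \cite{LR} and Farrell--Hsiang \cite{FH}, Freedman--Quinn \cite{FQ} give that $X$ is homeomorphic to an infranilmanifold, but the possible presence of exotic smooth structures shows this is strictly weaker than the asserted diffeomorphism.
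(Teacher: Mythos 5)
The statement you were asked to prove is presented in the paper as a \emph{conjecture}, not a theorem; the paper offers no proof, only a one-paragraph remark on what a proof ought to look like. Your proposal --- reduce via the $\RCD+\CBA$ structure theory of \cite{Kap-Kell-Ket-19} to a closed $C^1$-manifold carrying a $C^0\cap\BV$ metric, re-run the pseudo-cover, fibration and induction machinery to get asphericity and near-Euclidean universal cover, then smooth equivariantly by a Ricci-type flow and apply pseudolocality and Gromov--Ruh --- is precisely the route the paper envisions in the remark following the conjecture, namely that the Induction and Asphericity Theorems go through with minimal changes while ``generalizing the Ricci flow part of the proof would require more work due to the low regularity of the initial Riemannian metric.'' Your diagnosis of the bottleneck is the same as the paper's.

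That said, what you have written is a research outline rather than a proof, and you say so yourself. The missing ingredient is genuine: one needs short-time existence and quantitative control (pseudolocality, distance distortion, preservation of near-Euclidean isoperimetry at a definite scale) for a Ricci or Ricci--DeTurck flow issuing from a $C^0\cap\BV$ metric of the kind produced by \cite{Kap-Kell-Ket-19}, and such a result is not currently in the literature in the required form. You are correct that naive mollification destroys the synthetic bounds, and correct that a homeomorphism version can be extracted without any smoothing (asphericity plus Margulis plus Lee--Raymond plus Farrell--Hsiang / Freedman--Quinn / geometrization, exactly as in the paper's Section~\ref{sec:main-homeo}). Two small points to tighten if you develop this further: the passage from the $C^1$-structure to a well-defined smooth structure should cite Whitney's smoothing theorem explicitly, and you should note that the diffeomorphism to an infranilmanifold must be compatible with the smooth structure produced by the flow from the rough metric --- again a point that ultimately rests on the low-regularity smoothing step. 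Until that step is supplied, the statement remains a conjecture, exactly as the paper says.
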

    The proofs of the  Induction Theorem and of the Asphericity Theorem  go through with minimal changes in this setting. However, generalizing the Ricci flow part of the proof would require more work due to the low regularity of the initial Riemannian metric. We fully expect this to be possible however.

    \section{Almost  flat mixed curvature metrics on $T^2$ without uniform lower sectional curvature bounds }\label{sec:example}
    
    The aim of this section is to prove the following
    \begin{lemma}\label{lemma-ex}
    There exists a sequence of weighted metrics  $(T^2, g_i,f_i)$ such that for any $N>2$  it holds $\sec g_i\le 1/i, \Ric_{f_i,N}\ge-\eps_i(N),\diam (T^2,g_i)\le 10$ where $\eps_i(N)\to 0$ as $i\to \infty$ and such that
    $\min \sec_{g_i}\to\ -\infty$ as $i\to\infty$.
    \end{lemma}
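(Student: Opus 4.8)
The plan is to build the examples by explicit revolution-type metrics on $T^2$, exploiting the warped product structure from Lott's construction in reverse. Concretely, write $T^2 = S^1_a \times S^1_b$ with coordinates $(x,s)$ and look for a metric of the form $g = dx^2 + \phi(x)^2\, ds^2$, where $\phi > 0$ is a smooth function on the circle $S^1_a = \R/a\Z$. Such a metric is a warped product with one-dimensional base, its Gaussian curvature is $\sec_g = -\phi''/\phi$, and the weight will be chosen as $e^{-f} = \psi(x)^{N-2}$ for a second positive function $\psi$ on $S^1_a$, so that the Bakry--Emery condition $\Ric_{f,N} \ge \lambda$ becomes, along the base direction $\partial_x$, an inequality of the form $-\phi''/\phi - (N-2)\psi''/\psi \ge \lambda$ (using $\Ric_g = \sec_g\cdot g$ in dimension $2$, $\Hess f = -(N-2)(\log\psi)'' dx^2$ along $\partial_x$ plus a cross term, and the correction $-df\otimes df/(N-n)$); one must also check the $\partial_s$ direction, which is automatically controlled since $\phi, \psi$ and their derivatives will be bounded. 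The idea is then to choose $\phi$ with a narrow, deep "dip" forcing $\phi''/\phi$ to be large and positive somewhere (hence $\sec_g$ very negative there), while simultaneously choosing $\psi$ with a compensating "bump" at the same location so that $(N-2)\psi''/\psi$ is very negative there, making the sum bounded below by $-\eps_i(N)$.

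The key steps, in order: First, fix the topology by taking $a = b = 1$ (or any fixed lengths so $\diam \le 10$ is clear once $\phi,\psi$ stay bounded above and below by fixed constants). Second, construct for each $i$ a smooth periodic $\phi_i$ with $\tfrac12 \le \phi_i \le 2$, with $-\phi_i''/\phi_i \le 1/i$ outside a small interval $I_i$ of length $\to 0$, and with $\min(-\phi_i''/\phi_i) \to -\infty$: e.g. make $\phi_i$ have a short region where it is strongly convex (so $\phi_i'' \gg 0$), which is easy since a convex dip of fixed depth over a shrinking interval has second derivative blowing up. Third, construct $\psi_i$ supported near $I_i$ with a concentrated concave bump so that $-(N-2)\psi_i''/\psi_i$ is large and negative exactly where $-\phi_i''/\phi_i$ is large and positive, and so that their sum is $\ge -\eps_i(N)$ with $\eps_i(N)\to 0$; this is a matter of matching the profile of $\psi_i''/\psi_i$ to that of $\phi_i''/\phi_i$ up to lower order, which one can do because both quantities are essentially free to prescribe (integrate twice, adjusting constants to keep positivity and periodicity). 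Fourth, verify the remaining curvature inequalities: $\sec_g = -\phi_i''/\phi_i \le 1/i$ holds by the construction of $\phi_i$; the $\partial_s$-components of $\Ric^g$, $\Hess f$, $df\otimes df$ are all $O(1)$ in the bounded quantities and, after the same bump compensation, also satisfy the lower bound (one checks the $\partial_s\partial_s$ component of $\Ric_{f,N}$ reduces to an expression in $\phi_i, \phi_i', \psi_i, \psi_i'$ and their ratios that is bounded below once the $\partial_x$ inequality holds, possibly after shrinking $I_i$ further). Fifth, record $\min\sec_{g_i} = \min(-\phi_i''/\phi_i)\to -\infty$ and $\diam(T^2,g_i)\le 10$, finishing the proof.

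I expect the main obstacle to be the simultaneous bookkeeping in Step Four: it is not enough to kill the bad term in the $\partial_x$ direction; one must ensure that the compensating bump in $\psi_i$ does not create a new violation of $\Ric_{f,N}\ge -\eps_i(N)$ in the fiber direction or through the cross terms. The cleanest way to manage this is to decouple the scales: first choose $\phi_i$ so that the region where $\sec_g$ is very negative has width $\ell_i$, then choose $\psi_i \equiv 1$ outside a still smaller subinterval $I_i' \subset I_i$ and let $\psi_i$ do its compensation only there, arranging that on $I_i'$ the dominant terms are exactly $-\phi_i''/\phi_i$ and $-(N-2)\psi_i''/\psi_i$ while everything else is uniformly bounded; then one tunes $\psi_i$ by an explicit two-parameter family (amplitude and width) and uses an intermediate-value / continuity argument to hit the target bound. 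A convenient sanity check is the limiting picture: as $i\to\infty$, the metrics $g_i$ should Gromov--Hausdorff converge to a fixed flat (or nearly flat) $T^2$ of diameter $\le 10$, since the bad curvature lives on a region of measure $\to 0$; this confirms the diameter bound and that the construction is consistent with the Main Theorem (the limit is a torus, an infranilmanifold).
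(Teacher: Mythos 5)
Your proposal takes a genuinely different route from the paper: a warped-product metric $g = dx^2 + \phi(x)^2\,ds^2$ on $T^2$ with weight $e^{-f}=\psi(x)^{N-2}$, versus the paper's conformal construction $g_\eps=e^{2\phi_\eps}g_0$ on $\R^2$ with a radial weight. Unfortunately the warped-product route cannot work, and the obstruction is exactly the fiber direction you flag as the main difficulty in Step Four.

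Your $\partial_x$-computation is correct: with $u=\psi'/\psi$,
\[
\Ric_{f,N}(\partial_x,\partial_x) \;=\; -\frac{\phi''}{\phi} - (N-2)\bigl(u'+u^2\bigr),
\]
where the $u^2$ term comes from the $-\frac{df\otimes df}{N-2}$ correction. In the fiber direction, since $f$ depends only on $x$, one has $\Hess f(\partial_s,\partial_s)=\phi\phi' f'$, a \emph{first}-order quantity in $f$, and the unit-normalized fiber component is
\[
\Ric_{f,N}\bigl(\phi^{-1}\partial_s,\,\phi^{-1}\partial_s\bigr) \;=\; -\frac{\phi''}{\phi} - (N-2)\,\frac{\phi'}{\phi}\,u.
\]
Now impose the constraints. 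The bound $\sec\le 1/i$ gives $\phi''/\phi\ge -1/i$, so the $\partial_x$-inequality reads $u'+u^2\le \frac{1/i+\eps_i}{N-2}$ pointwise. Since $u$ is periodic on the circle, at its interior maximum $u'=0$ forces $u^2\le \frac{1/i+\eps_i}{N-2}$ there; and $u$ can never drop below $-\sqrt{(1/i+\eps_i)/(N-2)}$, for once it did it would have $u'\le \frac{1/i+\eps_i}{N-2}-u^2<0$ and decrease forever, contradicting periodicity. So $|u|\le\sqrt{(1/i+\eps_i)/(N-2)}\to 0$ uniformly. Feeding this into the $\partial_s$-inequality gives $\phi''/\phi \le \eps_i + (N-2)|\phi'/\phi|\,|u|$, and applying the same periodic-critical-point argument to $v=\phi'/\phi$, which satisfies $v'+v^2=\phi''/\phi$, yields $|v|\to 0$ as well, hence $\phi''/\phi\to 0$ uniformly. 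Thus $\min\sec_{g_i}=-\max(\phi''/\phi)\to 0$, not $-\infty$: the compensating bump in $\psi$ you hope for is simply not available.

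The root cause is structural. Because the weight's fiber Hessian is $\phi\phi' f'$ rather than a genuine second derivative, it vanishes wherever $\phi'=0$ (in particular at the bottom of any dip), and more fundamentally it is bounded by $|f'|$, which the $\frac{df\otimes df}{N-n}$ term, via the $\partial_x$-constraint, forces to be small. The paper's conformal model escapes this trap because the flat Hessian of the radial weight $f\approx \frac{1}{2\eps}r^2$ near the origin contributes $\approx\frac{1}{\eps}|V|^2$ in \emph{every} direction $V$ simultaneously, so one quadratic profile can cancel the bad term $-\Delta\phi_\eps|V|^2$ uniformly over directions, while the cross terms and the $df\otimes df$ correction stay small because $|\nabla\phi_\eps|,|\nabla f_\eps|=O(\eps)$ globally. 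To salvage a separated-variables construction you would need $f$ to depend genuinely on the fiber coordinate as well, at which point you are effectively building a two-variable profile of the same kind the paper uses.
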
    
  This shows that the assumptions of Theorem~\ref{thm:almostflat} do not imply any uniform lower sectional curvature bounds on $M$. The example we construct is for illustrative purposes and the material of this section is not needed for the rest of the paper.
    \begin{proof}
    Let $0<\eps\ll 1$.  Let $g_0$ be the canonical metric on $\R^2$.
    Let $\hat h_\eps\co [0,1]\to  [0,\infty)$ be a piecewise smooth function such that $\hat h_\eps(0)=0, \hat h'(x)=\frac{x}{\eps}$ for $0\le x\le \eps^2$, $\hat h'(x)=\eps$ for $\eps^2\le x\le 0.99$ and $\hat h_\eps'(x)=0$ for $0.99\le x\le 1$.
    It's easy to smooth it out near $\eps^2$ and $0.9$ to get  a $C^\infty$ function $\check h_\eps\co  [0,1]\to  [0,\infty)$ such that
    
    \begin{enumerate}
    \item $\check h_\eps=\hat h_\eps$ on $[0,\eps^2/2]$ and $[4\eps^2,0.9]$;
    \item $|\check h_\eps(x)|\le \eps$ and $|\check h_\eps'(x)|\le \eps$ for all $x$;
    \item $\check h_\eps''\ge 0 $ on $[0,1/2]$
    \item $|\check h_\eps'|\le \eps, |h_\eps''|\le \eps$ for $x\ge 4\eps^2$
    \item $\check h_\eps''\le \frac{2}{\eps}$ for $0\le x\le 3\eps^2$
    \item $\check h''(x)\le \frac{2}{\eps}$ for all $x$.
    \end{enumerate}
    
    Next, consider $\eps x^2+\check h_\eps$. It's easy to extend it to $1,\infty$ and smooth it out near 1 such that the resulting function $h_\eps\co[0,\infty)\to[0,\infty)$ is $C^\infty$ and
    satisfies
    
    \begin{enumerate}
    \item $h_\eps=\eps x^2+\check h_\eps$ on $[0,3/4]$
    \item $|h_\eps|\le\eps, |h_\eps'|\le 10\eps,  |h_\eps''|\le 10\eps$ for $x\ge 10\eps^2$;
    \item $h_\eps(x)=0$ for $x\ge 2$.
    \end{enumerate}
    
    Note that by construction $||h||_{C^2}\le \eps$ on $[10\eps^2,\infty)$. Also $h_\eps$ is convex on $[0,3/4]$.
    
    Let $f_\eps\co\R^2\to\R$ be given by $f_\eps(x,y)=h_\eps(\sqrt{x^2+y^2})$. Then by construction $f_\eps$ is $C^\infty$ on $\R^2$ and has the following properties
    
    \begin{enumerate}
    \item\label{eq-small-ball} On $B_{\eps^2/3}(0)$ it holds that $f_\eps(x,y)=(\frac{1}{2\eps}+\eps)(x^2+y^2)$
    \item \label{hess-upper-bnd} $|\Hess_{f_\eps}|\le \frac{4}{\eps}$ on $\R^2$.
    \item \label{lower-hess} $\Hess_{f_\eps}\ge -20{\eps}\cdot g_0$ on $\R^2$
 \item     $f_\eps$ has support in $\bar B_2(0)$
 \item \label{C-2-small} $||f_\eps||_{C^2}\le 20\eps$ outside $B_{4\eps^2}(0)$.
 \item \label{str-convex}    $f_\eps$ is strictly convex on $B_{3/4}(0)$.
 
    \end{enumerate}
    
    Let $\phi_\eps=\frac{f_{\eps/10}}{1000}$. 
    
   Let $g_\eps=e^{2\phi_\eps}g_0$. Look at the weighted Riemannian manifold $M^2_\eps=(\R^2,g_\eps, e^{-f_\eps}d\vol_{g_\eps})$.
   
   Let us compute its sectional curvature and Barkty-Emery Ricci curvature. We use the same computations as in~\cite[Example 6.8]{Kap-Ket-18}.
   
    Recall that given a Riemannian manifold  $(M^n,g)$ if we change the Riemannian metric conformally $\tilde g=e^{2\phi} g$ then for any smooth function $f$ on $M$ its hessian changes by the formula

\begin{equation}\label{hess-conf}
\tilde \Hess_f(V,V)=\Hess_f(V,V)-2\langle \nabla \phi, V\rangle \langle \nabla f, V\rangle+|V|^2\cdot \langle \nabla \phi, \nabla f\rangle
\end{equation}
here and in what follows $\langle\cdot,\cdot\rangle$ and $\langle\cdot,\cdot\rangle_\sim$ are the inner products with respect to $g$ and $\tilde g$ respectively.

Also, recall that when $n=2$ Ricci tensors of $g$ and $\tilde g$ are related as follows

\begin{equation}\label{eq-ric-conf}
\tilde \Ric (V,V)=\Ric(V,V)-\Delta \phi |V|^2
\end{equation}
Then for any $N>2$ the weighted $N$-Ricci tensor of $(M^2, \tilde g, e^{-f}d\vol_{\tilde g})$ is equal to

\begin{equation}\label{N-ric-conf}
\begin{gathered}
\tilde \Ric_f^N(V,V)=\tilde \Ric(V,V)+\tilde\Hess_f(V,V)-\frac{\langle \tilde\nabla f, V\rangle_\sim^2}{N-2}=\\
=\Ric(V,V)-\Delta \phi |V|^2+\Hess_f(V,V)-2\langle \nabla \phi, V\rangle \langle \nabla f, V\rangle+|V|^2\cdot \langle \nabla \phi, \nabla f\rangle-\frac{\langle \nabla f, V\rangle^2}{N-2}
\end{gathered}
\end{equation}
 
 Let us apply the above with $\phi=\phi_\eps, f=f_\eps$.
 
 First, equation \eqref{eq-ric-conf} tells us that $\sec(M_\eps)=-\Delta\phi_\eps$. 
 By ~\eqref{str-convex}   we have that $\sec_{M_\eps}\le 0$ on $B_{3/4}(0)$ and by \eqref{C-2-small} we have that  $\sec_{M_\eps}\le \eps$ outside $B_{\eps^2}(0)$. Altogether this means that $\sec_{M_\eps}\le \eps$ everywhere.   
 
 Lastly let us examine the Bakry\textendash Emery Ricci tensor of $M_\eps$. Since $\phi_\eps$ and $f_\eps$ are $2\eps$-close to 1 in $C^1(\R^2)$, the last  3 terms in \eqref{N-ric-conf} are small for small $\eps$ and unit $V$.
 The first term vanishes since $g_0$ is flat. 
 
On $B_{\eps^2/3}$ we have that $\Hess_{f_\eps}(V,V)\ge\frac{1}{\eps}|V|^2$ by  \eqref{eq-small-ball}. Also on the same ball $\Delta\phi_\eps\le \frac{1}{3\eps}$ by \eqref{hess-upper-bnd}.
 This means that  $\Hess_f(V,V)-\Delta \phi |V|^2\ge \frac{2}{3\eps}$ on $B_{\eps^2/3}$. Also, outside  $B_{\eps^2/2}$ we have that  $\Hess_f(V,V)\ge -20\eps|V|^2$ by~\eqref{lower-hess}. Also, outside the same ball 
 $|\Delta\phi_\eps|\le 20\eps$.  Altogether this means that  $\Hess_f(V,V)-\Delta \phi |V|^2\ge -40\eps$ on all of $\R^2$.
 
 Plugging this into \eqref{N-ric-conf} we get that the Bakry\textendash Emery Ricci curvature of $M^2_\eps$ satisfies $\Ric^{g_\eps}_{f_\eps,N}\ge -c_1\cdot \eps- \frac{c_2\eps}{N-2}$ for some universal constants $c_1>0,c_2>0$.
 

 Since $f_\eps\equiv 1,\phi_\eps\equiv 1$ outside $B_2(0)$ we can  use them to produce a weighted metric with the same sectional and Bakry\textendash Emery bounds on $T^2$ of diameter $\le 10$. Since this is possible for any $\eps>0$ taking an appropriate $\eps_i\to 0$ yields the Lemma.
 
    \end{proof}
 
\small{
\bibliographystyle{my-amsalpha}

\newcommand{\etalchar}[1]{$^{#1}$}
\providecommand{\bysame}{\leavevmode\hbox to3em{\hrulefill}\thinspace}
\providecommand{\MR}{\relax\ifhmode\unskip\space\fi MR }
\providecommand{\MRhref}[2]{%
  \href{http://www.ams.org/mathscinet-getitem?mr=#1}{#2}
}
\providecommand{\href}[2]{#2}

\end{document}